\DeclareSymbolFont{operators}{OT1}{txr}{m}{n}
\DeclareSymbolFont{italic}{OT1}{txr}{m}{it}
\DeclareSymbolFontAlphabet{\mathrm}{operators}
\DeclareSymbolFont{letters}{OML}{txmi}{m}{it}
\DeclareSymbolFont{symbols}{OMS}{txsy}{m}{n}
\newbox\shell
\newcommand{\dia}[2]{\setbox\shell=\hbox{\begin{picture}(180,120)(-90,-60)#1
\put(-90,-60){\makebox(180,120)[b]{\large #2}}\end{picture}}\dimen0=\ht
\shell\multiply\dimen0by7\divide\dimen0by16\raise-\dimen0\box\shell}
\newcommand{\vtx}{\circle*{10}}
\DeclareMathOperator{\IKM}{\mathbf{IKM}}
\DeclareMathOperator{\JYM}{\mathbf{JYM}}
\DeclareMathOperator{\tr}{tr}
\DeclareMathOperator{\IM}{Im}
\DeclareMathOperator{\RE}{Re}
\DeclareMathOperator{\Gal}{Gal}
\DeclareMathOperator{\Frob}{Frob}
\DeclareMathOperator{\Tr}{Tr}
\DeclareMathOperator{\Kl}{Kl}
\DeclareMathOperator{\Sym}{Sym}
\tikzset{>=stealth}
\begin{document}

\selectlanguage{english}

\title*{Some algebraic and arithmetic properties of Feynman diagrams}
% Use \titlerunning{Short Title} for an abbreviated version of
% your contribution title if the original one is too long
\author{Yajun Zhou}
% Use \authorrunning{Short Title} for an abbreviated version of
% your contribution title if the original one is too long
\institute{Yajun Zhou \at Program in Applied and Computational Mathematics (PACM), Princeton University, Princeton, NJ 08544, USA, \email{yajunz@math.princeton.edu}\at Academy of Advanced Interdisciplinary Studies (AAIS), Peking University, Beijing 100871, People's Republic of China, \email{yajun.zhou.1982@pku.edu.cn}}
%
% Use the package "url.sty" to avoid
% problems with special characters
% used in your e-mail or web address
%
\maketitle

\abstract{This article reports on some recent progresses in Bessel moments, which represent a class of Feynman diagrams in 2-dimensional quantum field theory. Many challenging mathematical problems  on these Bessel moments have been formulated as a vast set of conjectures,  by David Broadhurst and collaborators, who work at the intersection of high energy physics, number theory and algebraic geometry. We present the main ideas behind our verifications of several such conjectures, which revolve around linear and non-linear sum rules of Bessel moments, as well as  relations between individual Feynman diagrams and critical values of modular $L$-functions.  }

\section{Introduction}
\subsection{Bessel moments and Feynman diagrams}In perturbative quantum field theory (pQFT), we use \textit{Feynman diagrams} to quantify the interactions among elementary particles \cite{Groote2007,BBBG2008,Broadhurst2016,Laporta:2017okg}. In this survey, we will focus on 2-dimensional pQFT, where  the  propagator of a free particle with proper mass $m_0$ takes the following form:
\begin{align}
\frac{1}{(2\pi)^{2}}\lim_{\varepsilon\to0^+}\iint_{\mathbb R^2}\frac{e^{i\bm p\cdot\bm x-\varepsilon|\bm p|^{2}}\D^2\bm p}{|\bm p|^2+m^2_{0}}=\frac{K_{0}(m_{0}|\bm x|)}{2\pi}\label{eq:free_propagator}
\end{align}for $ \bm x\in\mathbb R^2\smallsetminus\{\bm 0\}$. Here,  $K_0(t):=\int_0^\infty e^{-t\cosh u}\D u,t>0 $ is the modified Bessel function of the second kind  and zeroth order.

Some results in 2-dimensional pQFT also find their way into the finite part of renormalized perturbative expansions of $ (4-\varepsilon)$-dimensional quantum electrodynamics \cite{Laporta2008}. For example, in Stefano Laporta's recent computation of the 4-loop contribution to  electron's magnetic moment \cite{Laporta:2017okg}, one of the  master integrals is the 4-loop sunrise diagram for 2-dimensional pQFT:\begin{align}\begin{split}{}&
\;\;\;\;\;
\dia{\put(-100,0){\line(1,0){200}}
\put(0,15){\circle{100}}
\put(0,-15){\circle{100}}
\put(50,0){\vtx}
\put(-50,0){\vtx}
}{}\;\;\;:=2^{4}\int_0^\infty I_0(t)[K_0(t)]^5t\D t\\={}&\int_0^\infty\frac{\D x_1}{x_1}\int_0^\infty\frac{\D x_2}{x_2}\int_0^\infty\frac{\D x_3}{x_3}\int_0^\infty\frac{\D x_4}{x_4}\frac{1}{\big(1+\sum^4_{k=1}x_k\big)\big(1+\sum^4_{k=1}\frac{1}{x_{k}} \big)-1}.\end{split}\label{eq:4-loop_sunrise}
\end{align}Here, the single integral over the variable $t$ represents the Feynman diagram in configuration space (see \cite[\S1]{BBBG2008}, \cite[\S9.2]{Broadhurst2013MZV} or \cite[(84)]{Broadhurst2016}), and $I_0(t)=\frac{1}{\pi}\int_0^\pi e^{t\cos\theta}\D\theta$ is the modified Bessel function of the first kind  and zeroth order; alternatively,  a quadruple integral over a rational function in the variables $x_1$, $x_2$, $x_3$ and $x_4$ represents the same Feynman diagram in the Schwinger parameter space (see \cite[\S9.1]{Broadhurst2013MZV} or \cite[\S8]{Vanhove2014Survey}).

On one hand, Feynman diagrams provide us with many physically meaningful multiple integrals over rational functions, which   are special cases of motivic integrals \cite{Vanhove2014Survey,BlochKerrVanhove2015}, playing prominent r\^oles in  the arena for algebraic geometers.
On the other hand, certain Feynman diagrams are (conjecturally or provably) related to arithmetically interesting objects \cite{Samart2016,Broadhurst2016,Zhou2017WEF}, such as special values of modular $L$-functions inside their critical strips, inviting  pilgrims to the pantheon of number theorists.

 After high-precision computations of Feynman diagrams, Bai\-ley--Bor\-wein--Broad\-hurst--Glasser \cite{BBBG2008}, Broadhurst \cite{Broadhurst2013MZV,Broadhurst2016}, Broadhurst--Schnetz \cite{BroadhurstSchnetz2014} and  Broadhurst--Mellit \cite{BroadhurstMellit2016}  had formulated various conjectures on \textit{Bessel moments}\begin{align}
\IKM(a,b;n):=\int_0^\infty [I_0(t)]^a[K_0(t)]^bt^n\D t
\end{align}with $ a,b,n\in\mathbb Z_{\geq0}$. The last few years had witnessed rapid progress in these conjectures proposed by David Broadhurst and coworkers. In \S\S\ref{subsec:alg_BM}--\ref{subsec:arith_BM} below, we give precise statements of some recently proven conjectures about Bessel moments, before presenting in \S\ref{subsec:Leitfaden}  a road map for their mathematical understanding. \subsection{Some algebraic relations involving Bessel moments\label{subsec:alg_BM}}

The following theorem about linear sum rules for Bessel moments grew out of numerical conjectures by Bai\-ley--Bor\-wein--Broad\-hurst--Glasser \cite[(220)]{BBBG2008}, Broadhurst--Mellit \cite[(7.10)]{BroadhurstMellit2016} and Broadhurst--Roberts \cite[Conjecture 2]{Broadhurst2017Paris}. The first proof appeared in \cite{HB1}.\begin{theorem}[Generalized Bailey--Borwein--Broadhurst--Glasser sum rules and generalized  Crandall numbers]\label{thm:B3G_Crandall}\begin{enumerate}[leftmargin=*,  label=\emph{(\alph*)},ref=(\alph*),
widest=a, align=left]\item\label{itm:B3G} We have \begin{align}
\int_0^\infty \frac{[\pi I_{0}(t)+iK_0(t)]^m+[\pi I_{0}(t)-iK_0(t)]^m}{i}[K_0(t)]^mt^n\D t=0\label{eq:HB_sum_rule}
\end{align} for  $ m\in\mathbb Z_{>1},n\in\mathbb Z_{\geq0},\frac{m-n}{2}\in\mathbb Z_{>0}$,   and \begin{align}\label{eq:HB_sum_rule'}
\int_0^\infty \frac{[\pi I_{0}(t)+iK_0(t)]^m-[\pi I_{0}(t)-iK_0(t)]^m}{i}[K_0(t)]^mt^n\D t=0
\end{align}for $ m\in\mathbb Z_{>0},n\in\mathbb Z_{\geq0},\frac{m-n-1}{2}\in\mathbb Z_{>0}$, which generalize the Bailey--Borwein--Broadhurst--Glasser sum rule  \cite[(220)]{BBBG2008}.
\item\label{itm:Crandall} The Crandall numbers (OEIS \texttt{A262961} \cite{OEISA262961})\begin{align}
A(n):={}&\left(\frac{2}{\pi}\right)^4
\int_0^\infty\left\{[\pi I_0(t)]^2 - [K_0(t)]^2\right\} I_0(t)[K_0(t)]^5\,(2t)^{2n-1} \D t
\label{eq:An}
\end{align}are  integers for all $ n\in\mathbb Z_{>0}$. More generally, the integral\begin{align}\begin{split}C_{m,n}={}&\frac{2^{1+2(n-1)[1-(-1)^{m}]}}{\pi^{m+1}}\int_0^\infty \frac{[\pi I_0(t)+i K_{0}(t)]^{m}-[\pi I_0(t)-i K_{0}(t)]^{m}}{i}\times\\&\times[K_0(t)]^{m}(2t)^{2n+m-3}\D t\end{split}\label{eq:Crandall_int}\end{align}  evaluates to a positive integer for each $ m,n\in\mathbb Z_{>0}$. \end{enumerate}\end{theorem}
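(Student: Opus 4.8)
The plan is to treat parts \ref{itm:B3G} and \ref{itm:Crandall} as the two ``parity halves'' of a single analytic object. Writing $A(t) := \pi I_0(t) + iK_0(t)$ and using the standard connection formula $K_0(te^{\pm i\pi}) = K_0(t)\mp i\pi I_0(t)$, one gets $A(t) = iK_0(te^{i\pi})$ and $\pi I_0(t) - iK_0(t) = -iK_0(te^{-i\pi})$, so the two integrands in \eqref{eq:HB_sum_rule}--\eqref{eq:HB_sum_rule'} become $\tfrac{2}{i}\RE[A(t)^m](K_0(t))^m$ and $2\,\IM[A(t)^m](K_0(t))^m$. First I would record that the binomial expansion of $A^m$ sorts the weight-$2m$ products $I_0^{m-k}K_0^{m+k}$ by the parity of $k$: the real (cosine) part collects even $k$, the imaginary (sine) part odd $k$. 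Thus \eqref{eq:HB_sum_rule} asserts the vanishing of one parity class of moments while \eqref{eq:Crandall_int} evaluates the other, and $C_{m,n}$ is an explicit combination of the moments $\IKM(m-k,m+k;2n+m-3)$ with $k$ odd, weighted by powers of $\pi$ and dyadic rationals. The factor $(K_0)^m$ is what forces absolute convergence: once multiplied into $(K_0)^m$, the quantity $\IM[A^m]$ decays like $e^{-2t}$.

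The concrete engine would be a recursion in the moment index. Each product $I_0^aK_0^{b}$ with $a+b=2m$ is annihilated by the order-$(2m+1)$ symmetric power $\mathcal{L}_{2m}$ of the modified Bessel operator $t\partial_t^2 + \partial_t - t$, so the full integrand $F_m(t) := 2\,\IM[A(t)^m](K_0(t))^m$ satisfies $\mathcal{L}_{2m}F_m = 0$. Pairing this with $t^s$ and integrating by parts against the formal adjoint $\mathcal{L}_{2m}^{*}$ produces a recursion $\sum_j c_j(s)\,M_m(s+2j) = 0$ for $M_m(s) := \int_0^\infty F_m(t)t^s\,\D t$, with $c_j \in \mathbb{Q}[s]$ and steps of $2$ dictated by the $t\leftrightarrow\partial_t$ scaling of $\mathcal{L}_{2m}$. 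The boundary terms at $\infty$ vanish by the $e^{-2t}$ decay, while those at the origin (where $F_m$ is only of size $(\ln t)^{2m}$) vanish once $s$ exceeds an explicit bound; the finitely many low-index exceptions get folded into the base-case analysis. Since $C_{m,n} = \frac{2^{1+2(n-1)[1-(-1)^m]}}{\pi^{m+1}}\,2^{2n+m-3}M_m(2n+m-3)$ and $n\mapsto n+1$ shifts the index by $2$, the next step is to verify that, after this normalization, the recursion relating consecutive $n$ has \emph{integer} coefficients; the engineered powers of $2$ and the scaling by $2t$ are exactly what clears the dyadic denominators introduced by $\mathcal{L}_{2m}^{*}$.

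It then remains to pin down the base cases. For $m=1$ the integrand is $2(K_0)^2$, and the classical evaluation $\int_0^\infty (K_0)^2 t^{2n-2}\,\D t = \frac{\sqrt\pi}{4}\frac{\Gamma(n-\frac12)^3}{\Gamma(n)}$ collapses, after the normalization, to $C_{1,n} = \binom{2n-2}{n-1}^{3}[(n-1)!]^{2}$, manifestly a positive integer, which fixes the seed of the $n$-recursion. For $m\geq 2$ I would extract the seed (smallest admissible $n$) by invoking the vanishing relations of part \ref{itm:B3G} to eliminate the even-parity moments and reduce $M_m$ at its lowest index to moments of strictly smaller weight, descending inductively in $m$ down to Gamma-value evaluations that simplify to integers. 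Positivity follows for $m\le 2$ from sign-definiteness of $\IM[A^m]$, and in general from the closed integer evaluations produced by the recursion, matching the Crandall counts of OEIS \texttt{A262961}.

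The hard part will be the uniform arithmetic control underlying the last two steps: proving that the explicit $2$- and $\pi$-powers in \eqref{eq:Crandall_int} clear every denominator generated by $\mathcal{L}_{2m}^{*}$ for all $m,n$ simultaneously, and — more seriously — that the odd-parity combination genuinely lands in $\mathbb{Q}$ rather than carrying the transcendental critical $L$-values attached to individual weight-$2m$ moments. This is precisely where the parity split does the real work: one must show that, modulo the sum rules \eqref{eq:HB_sum_rule}, the odd-parity moments reduce to a single rational (indeed integral) multiple of the relevant power of $\pi$, on which no critical $L$-value survives. Turning this rationality from a numerical observation into a proof, presumably via the same machinery that establishes \eqref{eq:HB_sum_rule}, is the crux of the argument.
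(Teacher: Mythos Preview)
Your plan never actually proves part~\ref{itm:B3G}: you reformulate the integrand as $\RE/\IM$ of $A(t)^m(K_0)^m$ but give no mechanism for the vanishing. In the paper this comes from a single contour argument: one integrates $[H_0^{(1)}(z)H_0^{(2)}(z)]^m z^n$ along the imaginary axis and closes rightwards, using the Hankel asymptotics~\eqref{eq:H0_asympt} for decay and the identity~\eqref{eq:H1H2Imz} to read off the real integrand on $i\mathbb R$. The parity split you describe is then just the even/odd part of the resulting vanishing in $n$.

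For part~\ref{itm:Crandall} your outline has a genuine gap, and you correctly flag it: the recursion in $n$ coming from $\mathcal L_{2m}^*$ can at best propagate rationality and integrality, it cannot create them, and your proposed ``descent in $m$ via the vanishing relations of part~\ref{itm:B3G}'' does not work as stated, since those relations concern the \emph{other} parity class and only hold in the range $n<m-1$, precisely where $C_{m,n}$ is already zero. The paper's route bypasses this entirely. It observes (Lemma~\ref{lm:vanHoeij}) that $C_{m,n}$ is, up to the stated normalization, the coefficient of $z^{-(2n+m-2)}$ in the large-$|z|$ expansion of $\big(\tfrac{\pi}{2}\big)^{2m}[H_0^{(1)}(z)H_0^{(2)}(z)]^m$; rationality is then immediate from the explicit rational asymptotic series~\eqref{eq:H0_asympt}. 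Because asymptotic series multiply, the $C_{m,n}$ obey a convolution in $m$, so integrality for all $m$ reduces to $m=1$ and $m=2$. The case $m=1$ is the closed form $C_{1,\ell}=[(2\ell-3)!!]^2\binom{2\ell-2}{\ell-1}$ (your formula $\binom{2n-2}{n-1}^3[(n-1)!]^2$ is off; check $\ell=2$), and $m=2$ is handled not by a Bessel recursion but by a generating-function identity of Rogers that shows $\alpha_{\ell+1}-\ell^2\alpha_\ell\in\mathbb Z$. None of the critical $L$-values you worry about ever enter, because one is reading off coefficients of an asymptotic expansion rather than evaluating individual $\IKM$'s.
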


The next theorem includes two sets of non-linear sum rules, which were originally discovered by  Broadhurst--Mellit \cite[(6.12) and (7.13)]{BroadhurstMellit2016} through numerical experiments on moderately-sized determinants.
An analytic proof has recently been found \cite{Zhou2017BMdet} for Broadhurst--Mellit determinants that come in arbitrary sizes.\begin{theorem}[Broadhurst--Mellit determinant formulae]\label{thm:BMdet}Define  $\mathbf M_k$  and $\mathbf N_k $ as  $k\times k$
matrices with elements
\begin{align}
(\mathbf M_k)_{a,b}:={}&\int_0^\infty[I_0(t)]^a[K_0(t)]^{2k+1-a}t^{2b-1}\D t,
\label{Mk}\\(\mathbf N_k)_{a,b}:={}&\int_0^\infty[I_0(t)]^a[K_0(t)]^{2k+2-a}t^{2b-1}\D t.\label{Nk}\end{align}Then we have the following determinant formulae:\begin{align}
\det\mathbf M_k={}&\prod_{j=1}^k\frac{(2j)^{k-j}\pi^j}{\sqrt{(2j+1)^{2j+1}}},\\\det\mathbf N_k={}&\frac{2\pi^{(k+1)^2/2}}{\varGamma((k+1)/2)}\prod_{j=1}^{k+1}\frac{(2j-1)^{k+1-j}}{(2j)^j},
\end{align}where  Euler's gamma function\footnote{Throughout this survey, we reserve the upright $\varGamma$ for Euler's gamma function, and write $ \Gamma$ in slanted typeface for congruence subgroups (to be introduced in \S\ref{subsec:mod_form}). } is defined by  $ \varGamma(x):=\int_0^\infty t^{x-1}e^{-t}\D t$ for $x>0$.
\end{theorem}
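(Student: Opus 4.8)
The plan is to exploit the fact that the individual entries are transcendental periods with no elementary closed form, so the determinant can only collapse through a structural cancellation; I would look for this cancellation in the differential algebra of $I_0$ and $K_0$. The starting point is that $I_0$ and $K_0$ are the two solutions of the modified Bessel equation $t y''+y'-t y=0$, normalised so that their Wronskian is $I_0(t)K_0'(t)-I_0'(t)K_0(t)=-1/t$. Consequently, for a fixed total degree $N$, the $N+1$ products $u_a:=[I_0]^a[K_0]^{N-a}$, $a=0,\dots,N$, form a basis of solutions of the symmetric-power operator $\Sym^N$ of the Bessel operator, an ODE of order $N+1$. The decay estimates $I_0(t)\sim e^{t}/\sqrt{2\pi t}$ and $K_0(t)\sim\sqrt{\pi/2t}\,e^{-t}$ guarantee that every entry of $\mathbf M_k$ and $\mathbf N_k$ converges, since the $K_0$-weight ($2k+1-a$ or $2k+2-a$) always exceeds the $I_0$-weight $a$ in the range $a\le k$.

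First I would apply an Andr\'eief (Heine) identity to linearise the determinant: writing $\phi_a(t)=[I_0(t)]^a[K_0(t)]^{N-a}$ with $N=2k+1$ (resp.\ $2k+2$) and $\psi_b(t)=t^{2b-1}$, one obtains
\begin{equation*}
\det\mathbf M_k=\frac{1}{k!}\int_{(0,\infty)^k}\det\big[\phi_a(t_j)\big]_{a,j=1}^k\;\det\big[t_j^{\,2b-1}\big]_{b,j=1}^k\;\prod_{j=1}^k\mathrm{d}t_j ,
\end{equation*}
and likewise for $\mathbf N_k$. The second factor is a Vandermonde in the variables $t_j^2$, while the first factorises as $\prod_j[K_0(t_j)]^N\cdot\det\big[r_j^{\,a}\big]$ with $r_j=I_0(t_j)/K_0(t_j)$, again a Vandermonde times $\prod_j r_j$. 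This recasts each determinant as a fully symmetric multiple integral of Selberg type.

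The decisive step, and the one I expect to carry the real content, is to recognise this symmetric integral as a \emph{Wronskian factorisation}. Because the $u_a$ solve $\Sym^N$ of the Bessel operator, Abel's formula pins down the full Wronskian $W(u_0,\dots,u_N)$, up to a constant, as an explicit power of $t$; the plan is to match the antisymmetrised integrand above against this Wronskian and its $t$-derivatives, so that the multiple integral telescopes to a single boundary evaluation. Since $\mathbf M_k$ and $\mathbf N_k$ involve only the products with $a=1,\dots,k$ rather than the complete basis $a=0,\dots,N$, I would invoke the generalised Bailey--Borwein--Broadhurst--Glasser sum rules of Theorem~\ref{thm:B3G_Crandall}\ref{itm:B3G} to supply the linear relations among the complementary moments, thereby completing the partial minor to a genuine Wronskian (equivalently, triangularising away the dependent rows). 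An induction on $k$ should then peel off one explicit factor at each stage, reproducing $\prod_{j}(2j)^{k-j}\pi^j/\sqrt{(2j+1)^{2j+1}}$ and $\prod_j(2j-1)^{k+1-j}/(2j)^j$, with the base cases $\det\mathbf M_1=\pi/\sqrt{27}$ and $\det\mathbf N_1=\pi^2/16$ fixing the normalisation.

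The main obstacle is the structural identity linking the moment determinant to the Wronskian. Proving it rigorously requires the correct chain of integrations by parts driven by the $\Sym^N$ equation, careful control of the boundary contributions at $t\to0^+$ (where $K_0\sim-\log t$ produces logarithmic singularities) and at $t\to\infty$ (where the delicate $e^{(a-N)t}$ cancellations must be tracked through the antisymmetrisation), together with a clean bookkeeping of how the sum rules close up the missing half of the symmetric-power basis. Once this factorisation is in place, evaluating the residual Wronskian via Abel's theorem and extracting the closed forms --- including the $\varGamma((k+1)/2)$ factor specific to the even total degree of the $\mathbf N_k$ family --- should be a comparatively routine, if intricate, computation.
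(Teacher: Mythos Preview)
Your proposal has the right instinct that a Wronskian and Abel's formula are behind the collapse, but the Wronskian you are reaching for is in the wrong variable, and this is a genuine gap rather than a detail. After the Andr\'eief step you have a $k$-fold integral with two Vandermonde factors; there is no mechanism by which the $\Sym^N$ equation in $t$ telescopes such an integral to boundary terms. The Wronskian $W(u_0,\dots,u_N)(t)$ of the products $u_a(t)=[I_0]^a[K_0]^{N-a}$ is an explicit power of $t$ by Abel, but it is a pointwise object in $t$ and bears no direct relation to a determinant of \emph{moments} $\int_0^\infty u_a(t)t^{2b-1}\,\mathrm dt$. The sum rules of Theorem~\ref{thm:B3G_Crandall}\ref{itm:B3G} do not close this gap either: they give linear relations among moments with fixed $t$-weight, not the row operations needed to turn a $k\times k$ moment minor into a $t$-Wronskian.

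What the paper actually does is introduce a deformation parameter $u$, replacing one Bessel factor by $I_0(\sqrt u\,t)$ or $K_0(\sqrt u\,t)$, so that the resulting one-parameter families of moments $\mu^1_{k,j}(u)$ (and $\nu^1_{k,j}(u)$) solve Vanhove's operator $\widetilde L_{2k-1}$ (resp.\ $\widetilde L_{2k}$) in the variable $u$. The relevant Wronskian is $\Omega_{2k-1}(u)=W[\mu^1_{k,1}(u),\dots,\mu^1_{k,2k-1}(u)]$ taken in $u$; Abel's formula for $\widetilde L_{2k-1}$ gives its evolution \eqref{eq:Omega_evolv}, hence its value up to a constant. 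The constant is pinned down by two limits: at $u=1$ column eliminations (using $W[I_0,K_0]=-1/u$) factor $\Omega_{2k-1}(1)$ as a multiple of $\det\mathbf M_{k-1}\det\mathbf M_k$, while the $u\to0^+$ asymptotics yield a multiple of $(\det\mathbf N_{k-1})^2$. Matching the two produces the recursions \eqref{eq:detM_rec}--\eqref{eq:detN_rec}, and the induction closes with the base cases you correctly identified. The missing idea in your plan is precisely this $u$-deformation: the differential algebra must act on the moments as functions of an external parameter, not on the integrand in $t$.
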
\subsection{Some arithmetic properties of Bessel moments\label{subsec:arith_BM}}In what follows, we write  $f_{k,N} $ for a \textit{modular form} (see  \S\ref{subsec:mod_form}  for technical details) of weight $k$ and level $N$, and define its $L $-function through a Mellin transform:  \begin{align}
L(f_{k,N},s):=\frac{(2\pi )^{s}}{\varGamma(s)}\int_0^{\infty} f_{k,N}(iy)y^{s-1}\D y.
\end{align}A special $L$-value $L(f_{k,N},s)$ is said to be \textit{critical}, if $ s\in\mathbb Z\cap(0,k)$. In this survey, we  will be interested in the following three special modular forms:\begin{align}
f_{3,15}(z)={}&[\eta(3z)\eta(5z)]^3+[\eta(z)\eta(15z)]^3,\label{eq:f315_defn}\\f_{4,6}(z)={}&[\eta(z)\eta(2z)\eta(3z)\eta(6z)]^{2},\label{eq:f46_defn}\\f_{6,6}(z)={}&
\frac{ [\eta (2 z) \eta (3 z)]^{9}}{[\eta (z)\eta (6 z)]^{3}}+\frac{ [\eta ( z) \eta (6 z)]^{9}}{[\eta (2z)\eta (3 z)]^{3}},\label{eq:f66_defn}\end{align}where the  Dedekind eta function is defined as $ \eta(z):=e^{\pi iz/12}\prod_{n=1}^\infty(1-e^{2\pi inz})$ for complex numbers $z$ satisfying $ \IM z>0$.
For $ y>0$, one can deduce  \begin{align}
f_{k,N}\left( \frac{i}{Ny} \right)=(\sqrt{N}y)^kf_{k,N}(iy)
\end{align} from the modular transformation  $ \eta(-1/\tau)=\sqrt{\tau/i}\eta(\tau)$ for $ \tau/i>0$. Consequently, the   $ L$-functions attached to these three modular forms satisfy the following reflection formulae \cite[(95), (106), (138)]{Broadhurst2016}:\begin{align}
\varLambda(f_{k,N},s):=\left( \frac{\sqrt{N}}{\pi} \right)^s\varGamma\left( \frac{s}{2} \right)\varGamma\left( \frac{s+1}{2} \right)L(f_{k,N},s)=\varLambda(f_{k,N},k-s).\label{eq:Lambda_refl}
\end{align}

 The studies of the Bessel moments $\IKM(1,4;1) $ and $ \IKM(2,3;1)$ had been initiated by Bailey--Borwein--Broadhurst--Glasser \cite[\S5]{BBBG2008}. Back in 2008, it was analytically confirmed that \begin{align}
\IKM(2,3;1)=\frac{\sqrt{15}\pi}{2}C\label{eq:IKM231}
\end{align}where  \begin{align} C=\frac{1}{240 \sqrt{5}\pi^{2}}\varGamma \left(\frac{1}{15}\right) \varGamma \left(\frac{2}{15}\right) \varGamma \left(\frac{4}{15}\right) \varGamma \left(\frac{8}{15}\right)\label{eq:BolognaC}\end{align} is the \textit{Bologna constant} attributed to Broadhurst \cite{Broadhurst2007,BBBG2008} and Laporta \cite{Laporta2008}. Later on, it was realized that \eqref{eq:IKM231} can be rewritten as $ \IKM(2,3;1)=\frac34L(f_{3,15},2)=\frac{3\pi}{2\sqrt{15}}L(f_{3,15},1)$ \cite[(96)--(97)]{Broadhurst2016}, thanks to the work of Rogers--Wan--Zucker \cite[Theorem 5]{RogersWanZucker2015}.
An innocent-looking conjecture  $ \IKM(1,4;1)=\frac{2\pi}{\sqrt{15}}\IKM(2,3;1)$ was proposed in 2008 \cite[(95)]{BBBG2008}, but was not resolved until Bloch--Kerr--Vanhove carried out a  \textit{tour de force}      in motivic cohomology during 2015 \cite{BlochKerrVanhove2015}, and Samart elucidated the computations of special gamma values in 2016 \cite{Samart2016}. We have recently simplified \cite[Theorem 2.2.2]{Zhou2017WEF} the result of Bloch--Kerr--Vanhove and Samart, as stated in the theorem below.  \begin{theorem}[3-loop sunrise via Bologna constant]\label{thm:Bologna}We have \begin{align}
\IKM(1,4;1)=\pi^{2}C=\frac{\pi^{2}}{5}L(f_{3,15},1)=\frac{3\pi}{2\sqrt{15}}L(f_{3,15},2).
\end{align}\end{theorem}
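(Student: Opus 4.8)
The plan is to collapse the four-fold equality onto one genuinely new linear relation, and then to prove that relation by exhibiting $\IKM(1,4;1)$ as a period of $f_{3,15}$. Most of the asserted identities are in fact already available. The reflection formula \eqref{eq:Lambda_refl} gives $\varLambda(f_{3,15},1)=\varLambda(f_{3,15},2)$, and unwinding the gamma factors with $\varGamma(1/2)=\sqrt\pi$, $\varGamma(1)=1$, $\varGamma(3/2)=\sqrt\pi/2$ turns this into $L(f_{3,15},2)=\frac{2\pi}{\sqrt{15}}L(f_{3,15},1)$. Combined with the recalled Rogers--Wan--Zucker evaluation $\IKM(2,3;1)=\frac{\sqrt{15}\pi}{2}C=\frac34L(f_{3,15},2)=\frac{3\pi}{2\sqrt{15}}L(f_{3,15},1)$, this shows that the three right-hand equalities of the theorem are mutually equivalent. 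Hence everything reduces to the single sum rule
\[
\IKM(1,4;1)=\frac{2\pi}{\sqrt{15}}\,\IKM(2,3;1),
\]
equivalently $\IKM(1,4;1)=\pi^2C$, which is precisely the 2008 Bailey--Borwein--Broadhurst--Glasser conjecture.

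To attack this relation I would work in the Wronskian/differential-equation framework for products of modified Bessel functions. The six products $[I_0(t)]^a[K_0(t)]^{5-a}$, $0\le a\le5$, span the solution space of the sixth-order operator $\Sym^5$ of the modified Bessel operator $(t\tfrac{d}{dt})^2-t^2$; the asymptotics $I_0(t)\sim e^t/\sqrt{2\pi t}$ and $K_0(t)\sim\sqrt{\pi/2t}\,e^{-t}$ show that $\int_0^\infty[I_0(t)]^a[K_0(t)]^{5-a}t\,\D t$ converges exactly when $a\le2$, so the only convergent periods at this weight are $\IKM(0,5;1)$, $\IKM(1,4;1)$ and $\IKM(2,3;1)$. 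A key preliminary observation is that the coefficient $\frac{2\pi}{\sqrt{15}}$ is transcendental, so no purely algebraic integration-by-parts identity among these moments, whose coefficients are necessarily rational, can produce it. Both the $\pi$ and the $\sqrt{15}$ must therefore enter through a modular identification, and this is exactly why the relation lies deeper than the even-weight sum rules of Theorem \ref{thm:B3G_Crandall}.

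The mechanism I would use to supply that identification, and the step I expect to be the main obstacle, is a Wick rotation of the Bessel contour. On the ray $t=is$ one has $I_0(is)=J_0(s)$, while $K_0$ continues through $K_0(te^{i\pi})=K_0(t)-i\pi I_0(t)$ into Hankel functions; rotating the exponentially damped representation into the oscillatory $J_0,Y_0$ picture exchanges $I_0$- and $K_0$-factors in a controlled way and recasts each moment as a Mellin integral against the two-dimensional propagator, whose value is a critical $L$-value of $f_{3,15}$. The transcendental factor then arises from the Fricke/Atkin--Lehner involution $z\mapsto-1/(15z)$ of $X_0(15)$, for which $f_{3,15}$ is an eigenform and whose scaling $\sqrt{15}$ is precisely the $\sqrt N$ appearing in \eqref{eq:Lambda_refl}; paired with the $\sqrt\pi$ from the gamma factors it delivers the constant $\frac{2\pi}{\sqrt{15}}$. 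The hard part is to carry out this contour deformation rigorously and to certify that it lands on the correct period, i.e.\ to recognize $\IKM(1,4;1)$ as a critical value of the \emph{same} modular motive as $\IKM(2,3;1)$ rather than as some unrelated period — exactly the point that forced Bloch--Kerr--Vanhove into motivic cohomology and Samart into gamma-value computations. In the Wick-rotation route the difficulty migrates into justifying that the arcs at infinity contribute nothing and that no spurious cusp or residue terms are picked up when the exponentially small and exponentially large Bessel branches are interchanged; controlling these boundary contributions at odd total Bessel weight, where the clean cancellations underlying Theorem \ref{thm:B3G_Crandall} are unavailable, is where essentially all of the work resides.
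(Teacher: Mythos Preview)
Your reduction of the theorem to the single relation $\IKM(1,4;1)=\pi^{2}C$ is correct, and your instinct to Wick-rotate into the oscillatory $J_0,Y_0$ regime is exactly the move the paper makes. Where your proposal goes off course is in what happens \emph{after} the rotation. You expect to have to identify $\IKM(1,4;1)$ directly as a period of $f_{3,15}$, and you therefore anticipate that the $\sqrt{15}$ must be supplied by an Atkin--Lehner involution and that the difficulty is comparable to the motivic argument of Bloch--Kerr--Vanhove. None of that is needed. The paper's argument never touches $f_{3,15}$: the Wick rotation (Fig.~\ref{fig:Wick}a) yields the concrete identity
\[
\left(\tfrac{2}{\pi}\right)^{4}\IKM(1,4;1)=-\JYM(5,0;1)+6\,\JYM(3,2;1)-\JYM(1,4;1),
\]
and then a second, purely contour-integral cancelation formula (Fig.~\ref{fig:Wick}b, equation \eqref{eq:JY_BHJ}) among $\JYM$'s --- which \emph{does} work at odd total weight, contrary to your worry --- collapses the right-hand side to $\tfrac{8}{15}\JYM(5,0;1)$, so that $\IKM(1,4;1)=\tfrac{\pi^{4}}{30}\JYM(5,0;1)$. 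At this point one simply quotes the known closed form for $\JYM(5,0;1)$ from Borwein--Straub--Wan--Zudilin, which already contains the Bologna gamma product, and the equality $\IKM(1,4;1)=\pi^{2}C$ drops out.

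The genuine gap in your outline, then, is the missing $\JYM$ cancelation mechanism: you see the Wick rotation but not the second contour identity that turns a three-term $\JYM$ combination into a single $\JYM(5,0;1)$, nor the fact that this last quantity is already in the literature. Your concern about ``spurious cusp or residue terms'' and about the absence of ``clean cancellations'' at odd weight is misplaced: the arcs at infinity vanish by Jordan's lemma applied to the Hankel asymptotics \eqref{eq:H0_asympt}, and the algebraic identity
\[
J(J^{4}-6J^{2}Y^{2}+Y^{4})-\tfrac{2J^{2}}{3}\bigl[(J+iY)^{3}-(-J+iY)^{3}\bigr]-\tfrac{1}{10}\bigl[(J+iY)^{5}-(-J+iY)^{5}\bigr]=-\tfrac{8}{15}J^{5}
\]
is what does the work. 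Once you have this, the proof is a page long and entirely elementary; the modular interpretation enters only through the already-established identification of $C$ with $\tfrac{1}{5}L(f_{3,15},1)$.
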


 Based on a discussion with Francis Brown at Les Houches in 2010, and encouraged by a result of Zhiwei Yun published in 2015 \cite{Yun2015},
David  Broadhurst discovered some relations between $ \IKM(a,6-a;1)$ and $L( f_{4,6},s)$ \cite[\S7.3]{Broadhurst2016}, as well as between  $ \IKM(a,8-a;1)$ and $L(f_{6,6},s) $ \cite[\S7.6]{Broadhurst2016}. All these conjectures have been verified recently \cite[\S\S4--5]{Zhou2017WEF}, so they are included in the theorem below. \begin{theorem}[Critical $L$-values for 6-Bessel and 8-Bessel problems]\label{thm:specL}\begin{enumerate}[leftmargin=*,  label=\emph{(\alph*)},ref=(\alph*),
widest=a, align=left]\item\label{itm:6Bessel} We have \begin{align}
\frac{3}{\pi^{2}}\IKM(1,5;1)=\IKM(3,3;1)={}&\frac{3}{2}L(f_{4,6},2),\label{eq:IKM151IKM331}\\\IKM(2,4;1)={}&\frac{\pi ^{2}}{2}L(f_{4,6},1)=\frac{3}{2}L(f_{4,6},3),\label{eq:IKM241}
\end{align}where the first equality in \eqref{eq:IKM151IKM331} comes from Theorem \ref{thm:B3G_Crandall}\ref{itm:B3G}, and the last equality in \eqref{eq:IKM241} descends from \eqref{eq:Lambda_refl}.\item\label{itm:8Bessel} We have\begin{align}
\IKM(4,4;1)={}&L(f_{6,6},3),\\\frac{1}{\pi^{2}}\IKM(1,7;1)=\IKM(3,5;1)={}&\frac{9}{4}L(f_{6,6},4),\label{eq:IKM171_351_eta_int}\\\IKM(2,6;1)={}&\frac{27}{4}L(f_{6,6},5),
\label{eq:IKM261}\end{align}where the first equality in \eqref{eq:IKM171_351_eta_int} follows from Theorem \ref{thm:B3G_Crandall}\ref{itm:B3G},\end{enumerate}\end{theorem}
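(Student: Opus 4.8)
The plan is to split each assertion into a part that follows from results already available and a short list of \emph{seed identities} carrying the genuine arithmetic content. The first equalities in \eqref{eq:IKM151IKM331} and \eqref{eq:IKM171_351_eta_int}, namely $\frac{3}{\pi^{2}}\IKM(1,5;1)=\IKM(3,3;1)$ and $\frac{1}{\pi^{2}}\IKM(1,7;1)=\IKM(3,5;1)$, are special cases of the linear sum rules of Theorem \ref{thm:B3G_Crandall}\ref{itm:B3G}: expanding $[\pi I_{0}(t)\pm iK_{0}(t)]^{m}$ by the binomial theorem and choosing $m,n$ suitably isolates exactly the stated pair of Bessel moments. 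The last equality in \eqref{eq:IKM241} is the reflection formula \eqref{eq:Lambda_refl} at $s=1$. What remains are the seeds $\IKM(3,3;1)=\frac{3}{2}L(f_{4,6},2)$ and $\IKM(2,4;1)=\frac{\pi^{2}}{2}L(f_{4,6},1)$ in part \ref{itm:6Bessel}, together with $\IKM(4,4;1)=L(f_{6,6},3)$, $\IKM(3,5;1)=\frac{9}{4}L(f_{6,6},4)$ and $\IKM(2,6;1)=\frac{27}{4}L(f_{6,6},5)$ in part \ref{itm:8Bessel}, the latter three realizing the three reflection-inequivalent critical values of $f_{6,6}$. After applying \eqref{eq:Lambda_refl} to move each $L$-value into the lower half of the critical strip, all five acquire the uniform shape $\IKM(a,k+2-a;1)=c\,L(f_{k,N},a-1)$, in which the critical index $s=a-1$ is selected by the number $a$ of $I_{0}$-factors and the swap $a\leftrightarrow b=k+2-a$ corresponds to the reflection $s\leftrightarrow k-s$.

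To establish a seed I would exploit the modularity of the relevant symmetric-power Bessel motive. The products $[I_{0}(t)]^{a}[K_{0}(t)]^{b}$ with $a+b=k+2$ solve a fixed symmetric power of the modified Bessel operator, whose underlying local system — a symmetric power of the Kloosterman sheaf — is attached, by Yun's computation \cite{Yun2015} together with Broadhurst's explicit identification, to the eta quotient $f_{k,N}$. The analytic heart is a Wick rotation of the contour in $\IKM(a,b;1)=\int_{0}^{\infty}[I_{0}(t)]^{a}[K_{0}(t)]^{b}t\,\D t$ from the positive real axis toward the imaginary $t$-axis. Under $t\mapsto iy$ the modified Bessel functions pass to ordinary Bessel functions, and the very combinations $\pi I_{0}\pm iK_{0}$ that govern \eqref{eq:HB_sum_rule}--\eqref{eq:HB_sum_rule'} reassemble the rotated integrand into $f_{k,N}(iy)$, up to an elementary power of $y$. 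The moment thereby turns into the Hecke--Mellin integral $\frac{(2\pi)^{s}}{\varGamma(s)}\int_{0}^{\infty}f_{k,N}(iy)y^{s-1}\,\D y=L(f_{k,N},s)$ at the index $s=a-1$ dictated by the power $t^{1}$ and the Jacobian of the rotation.

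Making this precise means showing, first, that the rotation is legitimate: the arcs at the origin and at infinity drop out because $K_{0}$ decays exponentially while $I_{0}$ grows only in a controlled sector, so no boundary terms survive. One then recognizes the rotated object as an Eichler integral of $f_{k,N}$, whose associated period is precisely the critical $L$-value — this is the place where Deligne's conjecture on critical values is \emph{verified} rather than assumed. In the lower-half normalization the prefactor splits as $c=q\,\pi^{\,k-2s}$ with $q\in\mathbb{Q}_{>0}$: the power $\pi^{\,k-2s}=\pi^{\,b-a}$ measures the imbalance between the $I_{0}$- and $K_{0}$-factors and reflects the Deligne-period normalization at $s$, while $q$ is fixed by bookkeeping the normalizer $(2\pi)^{s}/\varGamma(s)$ and the Jacobian. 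Re-applying \eqref{eq:Lambda_refl} then produces the displayed constants, including the purely rational $\frac{9}{4}$ and $\frac{27}{4}$ at the reflected points.

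The step I expect to be the main obstacle is the Wick-rotation identity itself: proving that the specific product of Bessel functions, after analytic continuation and regrouping, equals \emph{exactly} the eta quotient $f_{k,N}$, and not merely some other solution of the same symmetric-power ODE. Removing this finite-dimensional ambiguity requires pinning down the modular parametrization — the Hauptmodul and the exact normalization of the period lattice — so that the two sides coincide as functions, while simultaneously keeping every transcendental constant exact. Controlling this rigidity, and with it the precise rational and $\pi$-power factors, is the crux on which the whole theorem rests.
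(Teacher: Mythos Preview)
Your bookkeeping of the easy reductions is correct: the first equalities in \eqref{eq:IKM151IKM331} and \eqref{eq:IKM171_351_eta_int} are indeed instances of Theorem \ref{thm:B3G_Crandall}\ref{itm:B3G}, and the last equality in \eqref{eq:IKM241} is the functional equation \eqref{eq:Lambda_refl}. Your numerology $s=a-1$ is also consistent with all five seed identities after reflection.

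The gap is in the mechanism you propose for the seeds. A single Wick rotation $t\mapsto iy$ sends $I_0(t)\mapsto J_0(y)$ and $K_0(t)$ to a Hankel function, but the resulting product $[J_0(y)]^\alpha[\cdots]^\beta y$ is \emph{not} $f_{k,N}(iy)y^{s-1}$ up to an elementary factor; the eta products defining $f_{4,6}$ and $f_{6,6}$ simply do not arise as pointwise products of Bessel functions of a real argument. The contact with modular forms in the paper is not through rotating one integral but through a \emph{bilinear} step: one splits $[I_0]^a[K_0]^b=F(t)G(t)$ with each factor of total degree $(k+2)/2$, parametrizes the individual Hankel transforms $\int_0^\infty J_0(xt)F(t)t\,\D t$ and $\int_0^\infty J_0(xt)G(t)t\,\D t$ modularly (each is $Z_{6,3}(z)$ times a polynomial in $z$, with the Chan--Zudilin Hauptmodul supplying $x$), and then applies the Parseval--Plancherel identity for Hankel transforms. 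The modular form $f_{6,6}$ materializes only at this stage, via the Jacobian relation $[Z_{6,3}(z)]^2\,\D X_{6,3}(z)/\D z=2\pi i f_{6,6}(z)$; for the 6-Bessel case an analogous fusion on $\Gamma_0(6)_{+2}$ is required, together with a Wick-rotation identity $\IKM(2,4;1)=\frac{\pi^4}{30}\JYM(6,0;1)$ and a further Hilbert-type cancellation for $\IKM(3,3;1)$.

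So your final paragraph correctly anticipates that the modular parametrization is the crux, but underestimates its role: it is not a device for fixing constants in an already-established rotation identity, it \emph{is} the identity. What you need to supply is (i) the explicit modular parametrization of the half-degree Hankel transforms as solutions of Vanhove's Picard--Fuchs equation, (ii) the Parseval--Plancherel fusion that converts the product back to $\IKM$, and (iii) the Jacobian computation that produces $f_{k,N}$. Without this three-step structure the argument cannot close.
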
\subsection{Plan of proofs\label{subsec:Leitfaden}}To help our readers navigate through this  survey, we present the \textit{Leitfaden} in Table~\ref{tab:Leitfaden}.\begin{table}[hb]\caption{Organizational chart\label{tab:Leitfaden}}\begin{equation*}
\begin{scriptsize}\xymatrix{ &&\fbox{\S\ref{sec:toolkit}}\ar[dl]\ar[d]\ar[dr]&\\&\fbox{\S\ref{subsec:Wick}}\ar[dl]\ar[d]&\fbox{\S\ref{subsec:VanhoveL}}\ar[d]&\fbox{\S\ref{subsec:mod_form}}\ar[d]\ar[dr]\\\fbox{\S\ref{sec:Crandall}}\ar[d]&\fbox{\begin{tabular}{c}Theorems\\ \ref{thm:B3G_Crandall}\ref{itm:B3G} and \ref{thm:Bologna}\end{tabular}}&\fbox{\S\ref{sec:BMdet}}\ar[d]&\fbox{Theorem \ref{thm:specL}\ref{itm:8Bessel}}&\fbox{\S\ref{sec:L-value}}\ar[d]\\\fbox{Theorem \ref{thm:B3G_Crandall}\ref{itm:Crandall}}&&\fbox{Theorem \ref{thm:BMdet}}&&\fbox{Theorem \ref{thm:specL}\ref{itm:6Bessel}}}\end{scriptsize}
\end{equation*}\end{table}

In \S\ref{subsec:Wick}, we begin with a summary of useful analytic properties for Bessel functions, which result in a proof of Theorem \ref{thm:B3G_Crandall}\ref{itm:B3G}. We then present
Wick rotations, which are special contour deformations connecting moment problems for $\IKM(a,b;n)$  to those for     \begin{align} \JYM(\alpha,\beta;n):=\int_0^\infty[J_0(t)]^\alpha[Y_0(t)]^{\beta}t^{n}\D t,\end{align}where $ a+b=\alpha+\beta$, and  $ J_0(x):=\frac{2}{\pi}\int_0^{\pi/2}\cos(x\cos\varphi)\D\varphi$,  $ Y_0(x):=-\frac{2}{\pi}\int_0^\infty\cos(x\cosh u)\D u$ are Bessel functions of the zeroth order, defined for $x>0$. The     $\JYM$ problems have some desirable properties \cite{BSWZ2012,Zhou2017Int4Pnu}, which lead us to a quick proof of Theorem \ref{thm:Bologna}. Further applications of Wick rotations are given in \S\ref{sec:Crandall}, in the context of Theorem \ref{thm:B3G_Crandall}.

In \S\ref{subsec:VanhoveL}, we give a brief overview of Vanhove's differential equations \cite[\S9]{Vanhove2014Survey}, and compute certain Wro\'nskian determinants involving Bessel moments. These preparations allow us to present the main ideas behind the proof of Theorem \ref{thm:BMdet}, in \S\ref{sec:BMdet}.

In \S\ref{subsec:mod_form}, we describe how to obtain critical $L$-values via integrations over products of certain modular forms, illustrating our general procedures with the proof of  Theorem \ref{thm:specL}\ref{itm:8Bessel}. Some extensions in \S\ref{sec:L-value} then lead to a sketched proof of all the statements in Theorem \ref{thm:specL}.

In \S\ref{sec:open}, we wrap up this survey with some open questions on Bessel moments.
\section{Toolkit\label{sec:toolkit}}\subsection{Wick rotations of Bessel moments\label{subsec:Wick}}As we may recall, for $ \nu \in\mathbb C,-\pi<\arg z<\pi$,  the Bessel functions $ J_\nu$ and $Y_\nu$ are defined by\begin{align}
J_\nu(z):={}&\sum_{k=0}^\infty\frac{(-1)^k}{k!\varGamma(k+\nu+1)}\left( \frac{z}{2} \right)^{2k+\nu},&Y_\nu(z):={}&\lim_{\mu\to\nu}\frac{J_\mu(z)\cos(\mu\pi )-J_{-\mu}(z)}{\sin(\mu\pi)},\label{eq:JY_series}\intertext{which may be compared to the modified Bessel functions
$I_\nu$ and $K_\nu$:}I_\nu(z):={}&\sum_{k=0}^\infty\frac{1}{k!\varGamma(k+\nu+1)}\left( \frac{z}{2} \right)^{2k+\nu},&K_\nu(z):={}&\frac{\pi}{2}\lim_{\mu\to\nu}\frac{I_{-\mu}(z)-I_\mu(z)}{\sin(\mu\pi)}.\label{eq:IK_series}\end{align}Here, the fractional powers of complex numbers are defined through $ w^\beta=\exp(\beta\log w)$ for $\log w=\log|w|+i\arg w$, where $ |\arg w|<\pi$.

The  cylindrical Hankel functions $H^{(1)}_0(z)=J_0(z)+iY_0(z)$ and  $H^{(2)}_0(z)=J_0(z)-iY_0(z)$    are both  well defined for  $ -\pi<\arg z<\pi$. In view of \eqref{eq:JY_series} and \eqref{eq:IK_series}, we can verify  \begin{align} J_0(ix)=I_0(x)\quad\text{ and}\quad\frac{\pi i}{2}H_0^{(1)}(ix)=K_0(x)\label{eq:JH_Imz} \end{align} along with\begin{align}
H_0^{(1)}(\pm x+i0^+)=\pm J_{0}(x)+i Y_0(x) \label{eq:H1_x}
\end{align}  for $ x>0$.

As $ |z|\to\infty,-\pi<\arg z<\pi$, we have the following asymptotic expansions \cite[\S7.2]{Watson1944Bessel}:  \begin{align}\left\{\begin{array}{r@{\;=\;}l}\smash[t]{H_0^{(1)}(z)}&\smash[t]{\displaystyle\sqrt{\frac{2}{\pi z}}e^{i\left(z-\frac{\pi}{4}\right)}\left\{1+\sum _{n=1}^{N} \frac{\left[\varGamma \left(n+\frac{1}{2}\right)\right]^2}{(2 i z)^n \pi  n!}+O\left( \frac{1}{|z|^{N+1}} \right)\right\}},\\H_0^{(2)}(z)&\smash[b]{\displaystyle \sqrt{\frac{2}{\pi z}}e^{i\left(\frac{\pi}{4}-z\right)}\left\{1+\sum _{n=1}^{N} \frac{\left[\varGamma \left(n+\frac{1}{2}\right)\right]^2}{(-2 i z)^n \pi  n!}+O\left( \frac{1}{|z|^{N+1}} \right)\right\}},\end{array}\right.
\label{eq:H0_asympt}\end{align}which allow us to establish a vanishing identity\begin{align}
\int_{-i\infty}^{i\infty}[H_0^{(1)}(z)H_0^{(2)}(z)]^mz^{n}\D z=0,\quad n\in\mathbb Z\cap[0,m-1)\label{eq:HHM}
\end{align}by closing the contour rightwards. One can transcribe the last vanishing integral into the statements in Theorem \ref{thm:B3G_Crandall}\ref{itm:B3G}, bearing in mind that \begin{align}
H_0^{(1)}(it)H_0^{(2)}(it)=\frac{4K_{0}(|t|)}{\pi^{2}}\left[ K_0(|t|)-\frac{\pi it}{|t|}I_0(|t|) \right],\quad \forall t\in(-\infty,0)\cup(0,\infty).\label{eq:H1H2Imz}
\end{align} \begin{figure}[b]
\begin{minipage}{0.3\textwidth}\begin{tikzpicture}
\draw[->](-0.25,0)--(3,0)node[right]{$\RE z$};
\draw[->](0,-0.25)--(0,3)node[above]{$\IM z$};
\draw[<-,ultra thick] (1.25,0)--(2.75,0);
\draw[ultra thick] (0,0)--(1.75,0);
\draw[->,ultra thick] (0,0)--(0,1.5);
\draw[ultra thick] (0,0.75)--(0,2.75);
\draw[->,ultra thick] plot[samples=100,domain=90:45] ({2.72*cos(\x)},{2.72*sin(\x)});
\draw[ultra thick] plot[samples=100,domain=55:0] ({2.72*cos(\x)},{2.72*sin(\x)});

\draw(1.35,-0.25) node[below]{\textbf{(a)}};
\end{tikzpicture}
\end{minipage}\hspace{5em}\begin{minipage}{0.5\textwidth}
\begin{tikzpicture}
\draw(0,-0.25) node[below]{\textbf{(b)}};
\draw[->](-3,0)--(3,0)node[right]{$\RE z$};
\draw[->](0,-0.25)--(0,3)node[above]{$\IM z$};
\draw[->,ultra thick] (-2.75,0.1)--(0,0.1);
\draw[ultra thick] (-1,0.1)--(2.75,0.1);
\draw[ultra thick] plot[samples=100,domain=178:80] ({2.72*cos(\x)},{2.72*sin(\x)});
\draw[<-,ultra thick] plot[samples=100,domain=90:2] ({2.72*cos(\x)},{2.72*sin(\x)});
\end{tikzpicture}\end{minipage}
\caption{\textbf{a} Wick rotation that turns an $\IKM$ to a sum of several $\JYM$'s. Note that the contribution from the circular arc tends to zero as  $ |z|\to\infty$, thanks to Jordan's lemma being applicable to the asymptotic behavior of Hankel functions.~\textbf{b} Contour of integration that leads to a cancelation formula for $\JYM$'s. }
\label{fig:Wick}       % Give a unique label
\end{figure}
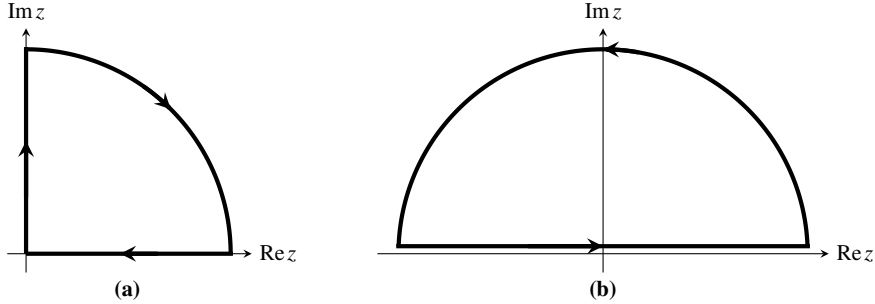

\begin{lemma}[An application of Wick rotation]We have the following relation between $\IKM$ and $\JYM$: \begin{align}
\left(\frac2\pi\right)^4\IKM(1,4;1)=-\JYM(5,0;1)+6\JYM(3,2;1)-\JYM(1,4;1).\label{eq:IKMtoJYM}
\end{align}\end{lemma}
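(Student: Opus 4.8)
The plan is to obtain \eqref{eq:IKMtoJYM} as the real-part identity read off from a single Wick rotation of the type drawn in Figure~\ref{fig:Wick}(a). I would integrate the function $[H_0^{(1)}(z)]^4\bigl[H_0^{(1)}(z)+H_0^{(2)}(z)\bigr]\,z$, which equals $2[H_0^{(1)}(z)]^4J_0(z)\,z$, around the boundary of the first quadrant: up the imaginary axis from $0$ to $i\infty$, along a quarter-circle arc at infinity, and back down the positive real axis from $\infty$ to $0$. Since $H_0^{(1)}$ and $H_0^{(2)}$ are holomorphic in the cut plane $-\pi<\arg z<\pi$, Cauchy's theorem makes the closed-contour integral vanish; once the arc is discarded, the integral along the imaginary ray (from $0$ to $i\infty$) equals the integral along the real ray (from $0$ to $\infty$).

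Designing this integrand is the \emph{decisive step}. On the positive imaginary axis, \eqref{eq:JH_Imz} gives $H_0^{(1)}(it)=-\tfrac{2i}{\pi}K_0(t)$ and $J_0(it)=I_0(t)$, whence $H_0^{(1)}(it)+H_0^{(2)}(it)=2J_0(it)=2I_0(t)$. The point of pairing $[H_0^{(1)}]^4$ with the sum $H_0^{(1)}+H_0^{(2)}$ is exactly that this sum collapses to a pure $I_0$ on the imaginary axis, so that $[H_0^{(1)}(it)]^4[H_0^{(1)}(it)+H_0^{(2)}(it)]=\tfrac{32}{\pi^4}I_0(t)[K_0(t)]^4$ is a real multiple of the integrand we want, with the competing $[K_0(t)]^5$ term cancelling identically. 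As $z\,\D z=-t\,\D t$ along this ray, the imaginary-axis piece evaluates to the real quantity $-\tfrac{32}{\pi^4}\IKM(1,4;1)$.

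On the positive real axis, \eqref{eq:H1_x} gives $H_0^{(1)}(x)=J_0(x)+iY_0(x)$, so the integrand becomes $2J_0(x)[J_0(x)+iY_0(x)]^4x$. Taking real parts (legitimate because the other side of the equality is real) and using $\RE[J_0+iY_0]^4=[J_0]^4-6[J_0]^2[Y_0]^2+[Y_0]^4$, the real-axis moment comes out as $2\JYM(5,0;1)-12\JYM(3,2;1)+2\JYM(1,4;1)$. Equating this with $-\tfrac{32}{\pi^4}\IKM(1,4;1)$, dividing by $-2$, and recalling $16/\pi^4=(2/\pi)^4$, I arrive at \eqref{eq:IKMtoJYM}. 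As a by-product, the imaginary part of the real-axis integrand has vanishing moment, which incidentally yields $\JYM(4,1;1)=\JYM(2,3;1)$, a cancellation of the kind associated with Figure~\ref{fig:Wick}(b).

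The main obstacle is to discard the arc rigorously. For this I would appeal to the asymptotics \eqref{eq:H0_asympt}: the two summands $[H_0^{(1)}(z)]^5$ and $[H_0^{(1)}(z)]^4H_0^{(2)}(z)$ behave like $e^{5iz}$ and $e^{3iz}$ respectively, times an algebraically decaying prefactor of order $|z|^{-5/2}$. Both exponentials decay in the open first quadrant, so Jordan's lemma applied to $e^{5iz}$ and to $e^{3iz}$ kills the arc contribution as its radius tends to infinity. A secondary technical point is the convergence of the individual $\JYM$ moments: at infinity $[J_0(t)]^a[Y_0(t)]^bt\sim t^{-3/2}$ is absolutely integrable, while near $t=0$ the sole singularity is the logarithmic blow-up of $Y_0$, which is harmless. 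Establishing these estimates, together with the exact cancellation of the $[K_0]^5$ term on the imaginary axis, is where the real work lies; the binomial algebra is routine.
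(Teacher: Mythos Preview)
Your proposal is correct and follows essentially the same route as the paper: both Wick-rotate the integrand $J_0(z)[H_0^{(1)}(z)]^4\,z$ (yours carries an extra factor of $2$) from the positive imaginary axis to the positive real axis, invoke the asymptotics \eqref{eq:H0_asympt} and Jordan's lemma to discard the arc, and then read off \eqref{eq:IKMtoJYM} from the real part of the binomial expansion of $[J_0+iY_0]^4$. Your write-up is more explicit about the $e^{5iz}$/$e^{3iz}$ split for the arc estimate and about the convergence of the individual $\JYM$ moments, and your remark on the imaginary part giving $\JYM(4,1;1)=\JYM(2,3;1)$ is a nice bonus, but the underlying argument is the same as the paper's.
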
\begin{proof}From \eqref{eq:JH_Imz}, we know that \begin{align}
\left(\frac2\pi\right)^4\IKM(1,4;1)=-\RE\int_{0}^{i\infty} J_0^{\vphantom{(1)}}(z)[H_0^{(1)}(z)]^4 z\D z,\label{eq:IKM141Imz}
\end{align}where the contour runs along the positive $\IM z$-axis.

Noting that  the asymptotic behavior of $ J_0(z)=[H_0^{(1)}(z)+H_0^{(2)}(z)]/2$ can be inferred from \eqref{eq:H0_asympt},   we can rotate the contour 90$ ^\circ$ clockwise, from the positive $ \IM z$-axis to the positive $\RE z$-axis (see Fig.~\ref{fig:Wick}\textit{a}), thereby equating \eqref{eq:IKM141Imz} with \begin{align}
-\RE\int_{0}^{\infty} J_0^{\vphantom{(1)}}(x)[H_0^{(1)}(x)]^4 x\D x=-\RE\int_{0}^{\infty} J_0(x)[J_0(x)+iY_0(x)]^4 x\D x,
\end{align}     hence the right-hand side of \eqref{eq:IKMtoJYM}.
\qed\end{proof}
\begin{proposition}[Evaluation of $\IKM(1,4;1)$]We have\begin{align}
\IKM(1,4;1)=\frac{\pi^{4}}{30}\JYM(5,0;1)=\pi^2C,\label{eq:IKM141JYM501}
\end{align}where $C$ is the Bologna constant defined in \eqref{eq:BolognaC}.\end{proposition}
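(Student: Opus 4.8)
The plan is to treat the two equalities in \eqref{eq:IKM141JYM501} separately, since they are of different natures. The first, $\IKM(1,4;1)=\tfrac{\pi^{4}}{30}\JYM(5,0;1)$, is a purely analytic identity that I would extract from Wick rotation together with linear sum rules among $\JYM$ moments. The second, $\tfrac{\pi^{4}}{30}\JYM(5,0;1)=\pi^2C$, carries the arithmetic content and reduces to a closed-form evaluation of the single Bessel moment $\JYM(5,0;1)=\int_0^\infty[J_0(t)]^5t\D t$ in terms of the gamma-value product \eqref{eq:BolognaC}.

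For the first equality I start from the lemma's identity \eqref{eq:IKMtoJYM}, which expresses $(2/\pi)^4\IKM(1,4;1)$ as $-\JYM(5,0;1)+6\JYM(3,2;1)-\JYM(1,4;1)$, and aim to eliminate $\JYM(3,2;1)$ and $\JYM(1,4;1)$ in favour of $\JYM(5,0;1)$. I would manufacture two further linear relations among these three moments. The first is the cancellation formula advertised by Fig.~\ref{fig:Wick}\textit{b}: integrating $[H_0^{(1)}(z)]^5z$ around that contour, Jordan's lemma discards the semicircular arc while Cauchy's theorem annihilates the closed integral, and the branch-cut connection formula \eqref{eq:H1_x} converts the two sides of the real axis into $\int_0^\infty\{[J_0+iY_0]^5+[J_0-iY_0]^5\}t\D t=0$; as the integrand is real this reads $\JYM(5,0;1)-10\JYM(3,2;1)+5\JYM(1,4;1)=0$. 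The second relation follows from a Wick rotation of Fig.~\ref{fig:Wick}\textit{a} type applied to $[J_0(z)]^2[H_0^{(1)}(z)]^3z$, whose three factors of $H_0^{(1)}\sim e^{iz}$ outweigh the two factors of $J_0$ and so guarantee decay in the first quadrant; on the imaginary axis \eqref{eq:JH_Imz} turns the integral into a purely imaginary multiple of $\IKM(2,3;1)$, so that the real part of $\int_0^\infty[J_0]^2[J_0+iY_0]^3t\D t$ must vanish, giving $\JYM(5,0;1)-3\JYM(3,2;1)=0$. Solving this linear system yields $\JYM(3,2;1)=\tfrac13\JYM(5,0;1)$ and $\JYM(1,4;1)=\tfrac{7}{15}\JYM(5,0;1)$, so that the right-hand side of \eqref{eq:IKMtoJYM} collapses to $\tfrac{8}{15}\JYM(5,0;1)$, which is exactly the first equality.

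There remains the evaluation $\JYM(5,0;1)=30C/\pi^2$. Here I would appeal to the closed form of $\int_0^\infty[J_0(t)]^5t\D t$, which is, up to normalisation, a short uniform random-walk density value and is governed by the same modular and hypergeometric structure behind the Bologna constant; the requisite identities are those of \cite{BSWZ2012,Zhou2017Int4Pnu}. Equivalently, once $\JYM(5,0;1)=30C/\pi^2$ is combined with the 2008 evaluation $\IKM(2,3;1)=\tfrac{\sqrt{15}\,\pi}{2}C$ recorded in \eqref{eq:IKM231}, the claim is the same as the sum rule $\JYM(5,0;1)=\tfrac{4\sqrt{15}}{\pi^{3}}\IKM(2,3;1)$ between a pure Bessel moment and a known critical $L$-value.

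I expect this last step to be the main obstacle. The collapse of the three $\JYM$ moments in the first part is elementary contour calculus once the relations are assembled, but fixing the transcendental value of $\JYM(5,0;1)$ genuinely needs arithmetic input — the special gamma values of \eqref{eq:BolognaC}, equivalently the critical value $L(f_{3,15},1)$ — and cannot be reached by any further formal rotation or cancellation of Bessel integrals alone.
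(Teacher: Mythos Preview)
Your proposal is correct and follows essentially the same route as the paper. The paper packages the two linear relations you derive into the single polynomial identity
\[
J(J^4-6J^2Y^2+Y^4)-\tfrac{2J^2}{3}\big[(J+iY)^3-(-J+iY)^3\big]-\tfrac{1}{10}\big[(J+iY)^5-(-J+iY)^5\big]=-\tfrac{8J^5}{15},
\]
whose second and third terms correspond exactly to your two cancellation relations $\JYM(5,0;1)-3\JYM(3,2;1)=0$ and $\JYM(5,0;1)-10\JYM(3,2;1)+5\JYM(1,4;1)=0$; the paper obtains both from the general Fig.~\ref{fig:Wick}\textit{b} contour \eqref{eq:BHJ} (with $(m,n,\ell)=(2,3,1)$ and $(0,5,1)$), whereas you get the first of them from a Fig.~\ref{fig:Wick}\textit{a} Wick rotation instead---a cosmetic difference, since both contour arguments yield the same relation. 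For the arithmetic step $\JYM(5,0;1)=30C/\pi^2$ the paper, like you, simply invokes \cite[(5.2)]{BSWZ2012}.
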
\begin{proof}For $ \ell,m,n\in\mathbb Z_{\geq0}$ satisfying either $\ell-(m+n)/2<0; m<n$ or $ \ell-m=\ell-n<-1$, we can prove \begin{align}
\int_{i0^+-\infty}^{i0^++\infty}[J_0^{\vphantom{(1)}}(z)]^m[H_0^{(1)}(z)]^nz^\ell\D z:=\lim_{\varepsilon\to0^+}\lim_{R\to\infty}\int_{i\varepsilon-R}^{i\varepsilon+R}[J_0^{\vphantom{(1)}}(z)]^m[H_0^{(1)}(z)]^nz^\ell\D z=0,\label{eq:BHJ}
\end{align}by considering the contour in Fig.~\ref{fig:Wick}\textit{b}. According to \eqref{eq:H1_x} and $J_0(-x)=J_0(x) $, we may reformulate \eqref{eq:BHJ} as\begin{align}
\int_0^\infty [J_{0}(x)]^m\left\{ [J_{0}(x)+i Y_0(x)]^{n}+(-1)^\ell [-J_{0}(x)+i Y_0(x)]^{n}\right\}x^\ell\D x=0,\label{eq:JY_BHJ}\tag{\ref{eq:BHJ}$'$}
\end{align} which is a convenient cancelation formula for $\JYM$'s.

With  $
J(J^4-6 J^2 Y^2+ Y^4)-\frac{2J^2}{3}  [(J+i Y)^3-(-J+i Y)^3]-\frac{(J+i Y)^5-(-J+i Y)^5}{10} =-\frac{8 J^5}{15}$ in hand, we can identify  the right-hand side of \eqref{eq:IKMtoJYM} with $ \frac8{15}\JYM(5,0;1)$. This proves the first equality in \eqref{eq:IKM141JYM501}. The second equality can be directly deduced from \cite[(5.2)]{BSWZ2012}. \qed\end{proof}

So far, we have recapitulated an analytic proof of  Theorem \ref{thm:Bologna}, as originally given in \cite[\S2]{Zhou2017WEF}. It is worth pointing out that Kluyver's  function \cite{Kluyver1906}\begin{align}
p_{n}(x)=\int_0^\infty J_0(xt)[J_0(t)]^n xt\D t
\end{align}characterizes the probability density of the distance $x$ traveled by  a rambler, who walks in the Euclidean plane, taking $n$ consecutive steps of unit lengths, aiming at uniformly distributed random directions.
The analytic properties of such probability densities have been extensively studied \cite{BNSW2011,BSWZ2012,BSW2013,BSV2016,Zhou2017PlanarWalks}. Recently, we have  shown \cite[Theorem 5.1]{Zhou2017PlanarWalks} that $ p_n(x)$ is expressible through Feynman diagrams when $n$ is odd, as stated in the theorem below.\begin{theorem}[$ p_{2j+1}(x)$ as Feynman diagrams]\label{thm:p_odd}For each $ j\in\mathbb Z_{>1}$, the function $p_{2j+1}(x),0\leq x\leq 1 $ is a unique $ \mathbb Q$-linear combination of \begin{align}
\int_0^\infty I_0(xt)[I_0(t)]^{2m+1}\left[\frac{K_{0}(t)}{\pi}\right]^{2(j-m)}xt\D t,\quad\text{where } m\in\mathbb Z\cap\left[0,\frac{j-1}{2}\right].\label{eq:pn_IKM}
\end{align} (When $ j=1$, the same is true for  $ 0\leq x<1$.)\end{theorem}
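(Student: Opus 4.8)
The plan is to treat Kluyver's density $p_{2j+1}(x)=\int_0^\infty J_0(xt)[J_0(t)]^{2j+1}xt\,\D t$ and each target integral $F_m(x):=\int_0^\infty I_0(xt)[I_0(t)]^{2m+1}[K_0(t)/\pi]^{2(j-m)}xt\,\D t$ as two boundary values of one and the same contour integral, linked by a Wick rotation through the first quadrant, exactly in the spirit of the Lemma and Proposition above. Writing $J=J_0(t)$ and $Y=Y_0(t)$, the whole theorem then collapses to a single homogeneous polynomial identity in $\mathbb Q[J,Y]$, to be combined with the cancelation formula \eqref{eq:JY_BHJ}.

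First I would record the two analytic inputs. (i) Rotating the positive $\IM z$-axis to the positive $\RE z$-axis (Fig.~\ref{fig:Wick}\textit{a}) gives, for each admissible $m$,
\[\int_0^\infty J_0(xt)\RE\big\{[J_0(t)]^{2m+1}[J_0(t)+iY_0(t)]^{2(j-m)}\big\}t\,\D t=(-1)^{j-m+1}4^{\,j-m}\,\frac{F_m(x)}{x},\]
since on $z=it$ one has $J_0(it)=I_0(t)$ and $H_0^{(1)}(it)=-2iK_0(t)/\pi$, while the circular arc is killed by Jordan's lemma: the asymptotics \eqref{eq:H0_asympt} give a net growth rate $x-(2j-4m-1)$ on the arc, negative precisely when $x<2j-4m-1$. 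This exponential threshold is simultaneously the condition for $F_m(x)$ to converge, and it forces the range $m\in\mathbb Z\cap[0,(j-1)/2]$; the endpoint $x=1$ survives for $j>1$ because the relevant integrands then decay only algebraically, like $t^{-j}$, which is integrable exactly when $j>1$, explaining why $j=1$ is confined to $x<1$. (ii) Closing the shifted real line upward as in \eqref{eq:BHJ}, but now carrying the kernel factor $J_0(xz)$, yields the cancelation formula $\int_0^\infty J_0(xt)[J_0(t)]^a\big\{[J_0(t)+iY_0(t)]^b-[-J_0(t)+iY_0(t)]^b\big\}t\,\D t=0$ whenever $a+x<b$, the extra $x$ coming from the exponential order of $J_0(xz)$.

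The arithmetic heart is then the claim that there exist rationals $c_m'$ and $d_r$ with
\[[J]^{2j+1}=\sum_{m=0}^{\lfloor(j-1)/2\rfloor}c_m'\,\RE\big\{J^{2m+1}(J+iY)^{2(j-m)}\big\}+\sum_r d_r\,J^{2r}\big\{(J+iY)^{2(j-r)+1}-(-J+iY)^{2(j-r)+1}\big\}.\]
Both sides lie in the $(j+1)$-dimensional space of degree-$(2j+1)$ polynomials odd in $J$ and even in $Y$; via the Chebyshev description $\RE[(J+iY)^N]=(J^2+Y^2)^{N/2}T_N\big(J/\sqrt{J^2+Y^2}\big)$, the two families of generators are explicit integer combinations of the basis $\{J^{2j+1-2k}Y^{2k}\}_{k=0}^{j}$, and matching the $j+1$ coefficients produces a square linear system of binomial coefficients. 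A direct check settles $j=2$ (recovering the Proposition's identity, with $c_0'=-15/8$) and $j=3$; feeding the resulting $c_m'$ through input (i) and discarding the cancelation terms via input (ii) gives $p_{2j+1}(x)=\sum_m(-1)^{j-m+1}4^{\,j-m}c_m'\,F_m(x)$, with all coefficients in $\mathbb Q$.

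Uniqueness is separate and softer: as a power series in $x$, each $F_m$ has radius of convergence exactly $2j-4m-1$ (the location of the exponential blow-up at infinity), and these radii are pairwise distinct. Hence in any nontrivial rational relation $\sum_m\gamma_mF_m\equiv0$ the surviving term of largest $m$ would be singular at the smallest radius while every other term stays analytic there, a contradiction; so the $F_m$ are linearly independent and the representation is unique. I expect the main obstacle to be the algebraic core, namely proving solvability of the square binomial system for \emph{all} $j$ (equivalently, that $[J]^{2j+1}$ lies in the span of the admissible real parts together with the cancelation terms); this will likely require a closed-form determinant evaluation or a generating-function identity for the Chebyshev coefficients, rather than the ad hoc linear algebra that disposes of small $j$.
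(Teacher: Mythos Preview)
The survey does not prove this theorem in the text; it merely cites \cite{Zhou2017PlanarWalks}. That said, your plan is exactly the natural generalisation of the Lemma and the Proposition in \S\ref{subsec:Wick}, which together carry out the case $j=2$, so you are faithfully extrapolating the paper's own method.

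The gap you correctly flag---solvability of the polynomial identity for all $j$---does not in fact require a determinant evaluation or generating-function trick. Under the substitution $J=\cos\theta$, $Y=\sin\theta$, both families of generators become $\cos^{s}\theta\,\cos\big((2j+1-s)\theta\big)$: the Wick-rotation terms give the odd $s=2m+1$ with $0\le m\le\lfloor(j-1)/2\rfloor$, and the cancelation terms give the even $s=2r$ with $0\le r\le\lfloor j/2\rfloor$ (your bound $x+2r<2(j-r)+1$ at $x=1$ forces $r\le j/2$, with the borderline handled for $j>1$ by the algebraic decay you already describe). Together these are precisely the $j{+}1$ values $s\in\{0,1,\dots,j\}$. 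Expanding $\cos^{s}\theta\,\cos\big((2j+1-s)\theta\big)$ by product-to-sum, the harmonics that appear are $\cos((2j+1)\theta),\cos((2j-1)\theta),\dots,\cos((2j+1-2s)\theta)$; in particular the \emph{lowest} harmonic present is $\cos((2j+1-2s)\theta)$, with nonzero rational coefficient. The change-of-basis matrix to $\{\cos((2k+1)\theta)\}_{k=0}^{j}$ is therefore lower-triangular with nonzero rational diagonal, hence invertible over $\mathbb Q$, and $J^{2j+1}$ lies in the span with uniquely determined rational coefficients. This closes the ``main obstacle'' without any case analysis. Your uniqueness argument via distinct radii of convergence $2j-4m-1$ is sound, though you should say explicitly why $F_m$ genuinely fails to continue analytically past $x=2j-4m-1$ (the leading asymptotic $\int^\infty e^{-(2j-4m-1-x)t}t^{-j}\,\D t$ produces a branch-type singularity there).
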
\subsection{Vanhove's differential equations and Wro\'nskians of Bessel moments\label{subsec:VanhoveL}}In \cite[\S9]{Vanhove2014Survey}, Vanhove has constructed $ n$-th order differential operators $ \widetilde L_n$ (written in the variable $u$ in this survey) so that the relation \begin{align}
\widetilde L_n\int_0^\infty I_0(\sqrt{u}t)[K_0(t)]^{n+1}t\D t=\mathrm{const}
\end{align} holds for all $ n\in\mathbb Z_{>0}$ and $u\in(0,(n+1)^2)$. The first few Vanhove operators  $ \widetilde L_n$ are listed in Table \ref{tab:VanhoveL}, where $ D^n=\partial ^n/\partial  u^n$ for $n\in\mathbb Z_{>0} $ and $ D^0$ is the identity operator.

\begin{table}[ht]\caption{The first few Vanhove differential operators (abridged from \cite[\S9, Table 1]{Vanhove2014Survey})\label{tab:VanhoveL}}\begin{tabular}{p{0.5cm}p{11cm}}
\hline\noalign{\smallskip}
$n$ & $ \widetilde L_n$  \\
\noalign{\smallskip}\svhline\noalign{\smallskip}
1 &$
    u(u-4)D^{1}+(u-2 )D^{0}$\\2&$ u(u- 1) (u-
9)D^{2}+(
  3\,{u}^{2}-20u+9 )D^{1}+( u-3 ) D^0  $\\3&$u^2 (u - 4) (u - 16)D^{3}+6 u (u^2 - 15 u + 32)D^{2}+(7 u^2 - 68 u + 64)D^{1}+(u-4)D^{0}$\\4&$u^2 (u-1) (u-9)(u-25) D^{4}+2 u (5 u^3-140 u^2+777 u-450)D^{3}+(25 u^3-518 u^2+1839 u-450)D^{2}+(3 u-5) (5 u-57)D^{1}+(u-5)D^{0}$\\
\noalign{\smallskip}\hline\noalign{\smallskip}
\end{tabular}
\end{table}

In general, for each $ n\in\mathbb Z_{\geq1}$, Vanhove's operator $ \widetilde L_n$  satisfies\begin{align}
\left\{ \begin{array}{c}
t\widetilde L_nI_0(\sqrt{u}t)=\frac{(-1)^n}{2^n}L^*_{n+2}\frac{I_0(\sqrt{u}t)}{t}, \\
t\widetilde L_nK_0(\sqrt{u}t)=\frac{(-1)^n}{2^n}L^*_{n+2}\frac{K_0(\sqrt{u}t)}{t}, \\
\end{array} \right.
\end{align}where $ L_{n+2}^*$ is the formal adjoint to the Borwein--Salvy operator $ L_{n+2}$ \cite{BorweinSalvy2007}, the latter of which is the $(n+1)$-st symmetric power of the Bessel differential operator $ (t\partial /\partial t)^2-t^2$ that annihilates both $ I_0(t)$ and $K_0(t)$. Using the Bronstein--Mulders--Weil algorithm \cite{BMW1997} for symmetric powers, we have shown \cite[Lemma 4.2]{Zhou2017BMdet} that the following homogeneous differential equations {\allowdisplaybreaks\begin{align}
\widetilde L_n\left[\int _0^\infty I_0(\sqrt{u}t)[K_0(t)]^{n+1}t\D t+  (n+1)\int_0^\infty K_0(\sqrt{u}t)I_{0}(t)[K_0(t)]^{n}t\D t\right]=0,\\\widetilde L_n\int_0^\infty I_0(\sqrt{u}t)[I_0(t)]^{j-1}[K_0(t)]^{n+2-j}t\D t=0,\quad \forall j\in\mathbb Z\cap\left[2,\frac{n}{2}+1\right],\\\widetilde L_n\int_0^\infty K_0(\sqrt{u}t)[I_0(t)]^{j}[K_0(t)]^{n+1-j}t\D t=0,\quad \forall j\in\mathbb Z\cap\left[2,\frac{n+1}{2}\right]
\end{align}}hold for $ u\in(0,1)$.

For $N\in\mathbb Z_{>1}$, we write  $ W[f_1(u),\dots,f_N(u)]$ for the Wro\'nskian determinant $ \det(D^{i-1}f_j(u))_{1\leq i,j\leq N}$. In \cite[\S4.1]{Zhou2017BMdet}, we have constructed
some Wro\'nskians as precursors to Broadhurst--Mellit determinants $\det\mathbf M_k $ and $ \det\mathbf N_k$ (see Theorem \ref{thm:BMdet}).
Concretely speaking, for each $ k\in\mathbb Z_{\geq2}$, we set \begin{align}
\left\{\begin{array}{l}\mu^\ell_{k,1}(u)=\frac{1}{2k+1}\int _0^\infty \{I_0(\sqrt{u}t)K_0(t)+2k K_0(\sqrt{u}t)I_{0}(t)\}[K_0(t)]^{2k-1}t^{2\ell-1}\D t,\\\mu^\ell_{k,j}(u)=\int_0^\infty I_0(\sqrt{u}t)[I_0(t)]^{j-1}[K_0(t)]^{2k+1-j}t^{2\ell-1}\D t,\forall j\in\mathbb Z\cap[2,k],\\\mu^\ell_{k,j}(u)=\int_0^\infty K_0(\sqrt{u}t)[I_0(t)]^{j-k+1}[K_0(t)]^{3k-1-j}t^{2\ell-1}\D t,\forall j\in\mathbb Z\cap[k+1,2k-1],\end{array}\right.
\label{eq:mu_defn}\end{align}and \begin{align}
\left\{\begin{array}{l}\nu^\ell_{k,1}(u)=\frac{1}{2(k+1)}\int _0^\infty \{I_0(\sqrt{u}t)K_0(t)+(2k+1) K_0(\sqrt{u}t)I_{0}(t)\}[K_0(t)]^{2k}t^{2\ell-1}\D t,\\\nu^\ell_{k,j}(u)=\int_0^\infty I_0(\sqrt{u}t)[I_0(t)]^{j-1}[K_0(t)]^{2k+2-j}t^{2\ell-1}\D t,\forall j\in\mathbb Z\cap[2,k+1],\\\nu^\ell_{k,j}(u)=\int_0^\infty K_0(\sqrt{u}t)[I_0(t)]^{j-k}[K_0(t)]^{3k+1-j}t^{2\ell-1}\D t,\forall j\in\mathbb Z\cap[k+2,2k],\end{array}\right.
\end{align}and consider the  Wro\'nskian determinants $\Omega_{2k-1}(u):=W[\mu^1_{k,1}(u),\dots,\mu^1_{k,2k-1}(u)] $, $\omega_{2k}(u):=W[\nu^1_{k,1}(u),\dots,\nu^1_{k,2k}(u)] $.
For $ k\in\mathbb Z_{\geq2}$, Vanhove's operators $ \widetilde L_{2k-1}$ and $\widetilde L_{2k} $ take the following forms  \cite[(9.11)--(9.12)]{Vanhove2014Survey}:\begin{align}
\widetilde L_{2k-1}={}&\mathfrak{m}_{2k-1}(u)D^{2k-1}+\frac{2k-1}{2}\frac{\D\mathfrak{m}_{2k-1}(u)}{\D u}D^{2k-2}+L.O.T.,\\\widetilde L_{2k}={}&\mathfrak n_{2k}(u)D^{2k}+k\frac{\D\mathfrak{n}_{2k}(u)}{\D u}D^{2k-1}+L.O.T.,
\end{align}where\begin{align}
\mathfrak{m}_{2k-1}(u)=u^k \prod _{j=1}^k[u-(2j)^2],\quad \mathfrak n_{2k}(u)=u^k \prod _{j=1}^{k+1}[u-(2j-1)^2],\label{eq:m_n_poly}
\end{align}and ``$ L.O.T.$'' stands for ``lower order terms''.
 Therefore, we have the following evolution equations for Wro\'nskians:\begin{align}
D^{1}\Omega_{2k-1}(u)={}&\frac{2k-1}{2}\Omega_{2k-1}(u)D^1\log\frac{1}{u^k \prod _{j=1}^k[(2j)^2-u]},\label{eq:Omega_evolv}\\D^{1}\omega_{2k}(u)={}&k\omega_{2k}(u)D^1\log\frac{1}{u^k \prod _{j=1}^{k+1}[(2j-1)^2-u]},
\end{align}where $0<u<1$. These differential equations will play crucial r\^oles in the proof of Theorem \ref{thm:BMdet} in \S\ref{sec:BMdet}.

\subsection{Modular forms and their integrations\label{subsec:mod_form}}Let \begin{align}
\Gamma_0(N):=\left\{ \left.\begin{pmatrix}a & b \\
c & d \\
\end{pmatrix}\right|a,b,c,d\in\mathbb Z;ad-bc=1;c \equiv0\,(\!\bmod N)\right\}
\end{align}be the  Hecke congruence group of level $N\in\mathbb Z_{>0}$. For a given Dirichlet character $ \chi$, a modular form $ M_{k,N}(z)$ of weight $k$, level $N$ and multiplier $\chi$ is a holomorphic\footnote{For technical requirements on  holomorphy at $ i\infty$ (more precisely, the $ \Gamma_0(N)$ images of $i\infty$, namely, the cusps $ \Gamma_0(N)\backslash\mathbb P^1(\mathbb Q)=\Gamma_0(N)\backslash(\mathbb Q\cup\{i\infty\})$), see \cite[Definition 1.2.3]{DiamondShurman}.} function that transforms like\footnote{For the modular forms  $ f_{4,6}(z)$ in \eqref{eq:f46_defn} and $ f_{6,6}(z)$ in \eqref{eq:f66_defn}, the multiplier is the trivial Dirichlet character $ \chi(d)\equiv 1$. For the modular form $ f_{3,15}(z)$ in \eqref{eq:f315_defn}, we have $ \chi(d)=\left(\frac{d}{15}\right)$  \cite[Proposition 5.1]{PetersTopVlugt1992}, where the Dirichlet character is defined through a Jacobi--Kronecker symbol.} \begin{align}
M_{k,N}\left( \frac{az+b}{cz+d} \right)=(cz+d)^k\chi(d)M_{k,N}(z),
\end{align}where $ \left(\begin{smallmatrix}a&b\\c&d\end{smallmatrix}\right)$ runs over all the members of $ \Gamma_0(N)$, and $z$ is an arbitrary point in the upper half-plane $\mathfrak H:=\{w\in\mathbb C|\IM w>0\}$. Modular forms of weight $0$\ (relaxing the requirement on holomorphy at cusps) are called modular functions. These $ \Gamma_0(N)$-invariant modular functions are effectively defined on the moduli space  $ Y_0(N)(\mathbb C)=\Gamma_0(N)\backslash\mathfrak H$ (see Fig.~\ref{fig:fun_domains}) for isomorphism classes of complex elliptic curves.

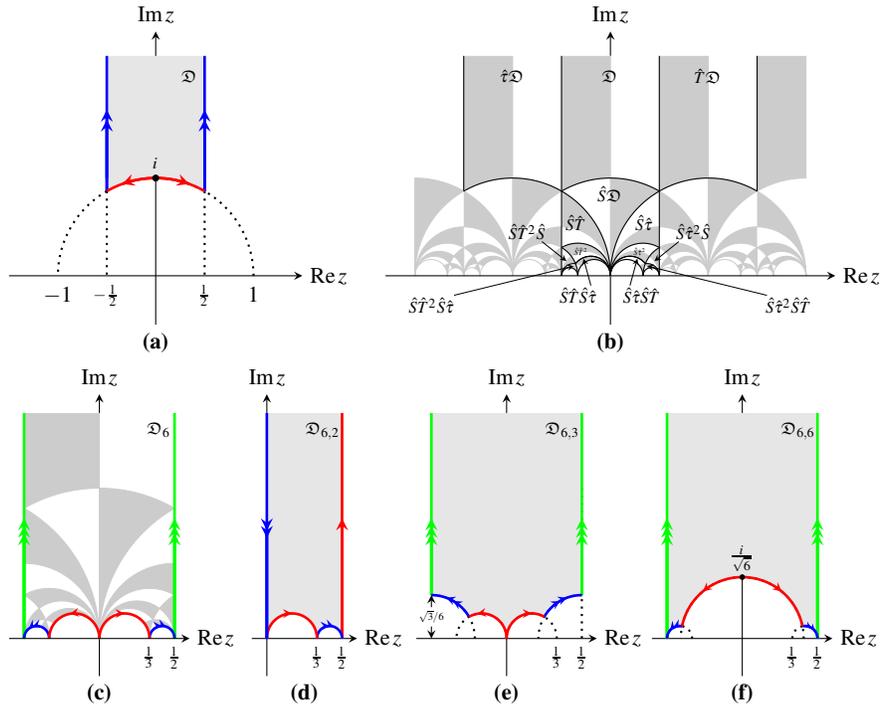
\begin{figure}[b]
\begin{minipage}{0.3\textwidth}\begin{tikzpicture}[scale=1.3]

\draw[->](0,-.5)--(0,2.5)node[above]{$\IM z$};
\draw[color=gray!20!white,fill] (.5,2.25)--(.5,{sqrt(3)/2})--plot[samples=120,domain=60:120] ({cos(\x)},{sin(\x)})--(-.5,{sqrt(3)/2})--(-.5,2.25);

\draw[->](-1.5,0)--(1.5,0)node[right]{$\RE z$};
\draw[dotted,thick](-0.5,{sqrt(3)/2})--(-0.5,0)node[below]{$^{-\frac12} $};
\draw[dotted,thick](0.5,{sqrt(3)/2})--(0.5,0)node[below]{$^{\frac12} $};
\draw[dotted, thick] plot[samples=180,domain=0:90] ({cos(\x)},{sin(\x)});

\draw[->,line width=1pt,color=blue]  ({.5},{sqrt(3)/2})--(.5,1.7);
\draw[->,line width=1pt,color=blue]  ({.5},{sqrt(3)/2})--(.5,1.6);
\draw[line width=1pt,color=blue] (.5,1)--(0.5,2.25);
\draw[->,line width=1pt,color=red] plot[samples=40,domain=90:70] ({cos(\x)},{sin(\x)});
\draw[line width=1pt,color=red] plot[samples=60,domain=90:60] ({cos(\x)},{sin(\x)});

\draw[->,line width=1pt,color=blue]  ({-.5},{sqrt(3)/2})--(-.5,1.7);
\draw[->,line width=1pt,color=blue]  ({-.5},{sqrt(3)/2})--(-.5,1.6);
\draw[line width=1pt,color=blue] (-.5,1)--(-0.5,2.25);
\draw[line width=1pt,->,color=red] plot[samples=40,domain=90:110] ({cos(\x)},{sin(\x)});
\draw[line width=1pt,color=red] plot[samples=60,domain=90:120] ({cos(\x)},{sin(\x)});

\draw[fill=black](0,1)circle[radius=0.03]node at (0,1.15){$_i$};
\draw[dotted, thick] plot[samples=120,domain=120:180] ({cos(\x)},{sin(\x)});
\draw(-1,-.03)node[below]{${-1} $};\draw(1,-.03)node[below]{${1} $};
\draw(0,-0.5) node[below]{\textbf{(a)}};
\node at (.35,2) {$^{ \mathfrak D}$};
\end{tikzpicture}
\end{minipage}\hspace{5em}\begin{minipage}{0.5\textwidth}
\begin{tikzpicture}[scale=1.3]
\draw[->](0,2.25)--(0,2.5)node[above]{$\IM z$};
\draw(0,-.5)--(0,0);
\foreach \y in {-2,-1,0,1} \draw[ultra thin,color=gray!40!white] plot[samples=360,domain=0:180] ({\y+5/12+cos(\x)/12},{sin(\x)/12});
\foreach \y in {-1,0,1,2} \draw[ultra thin,color=gray!40!white] plot[samples=360,domain=0:180] ({\y-5/12+cos(\x)/12},{sin(\x)/12});
\foreach \y in {-2,-1,0,1} \draw[ultra thin,color=gray!40!white] plot[samples=360,domain=0:180] ({\y+1/6+cos(\x)/6},{sin(\x)/6});
\foreach \y in {-1,0,1,2} \draw[ultra thin,color=gray!40!white] plot[samples=360,domain=0:180] ({\y-1/6+cos(\x)/6},{sin(\x)/6});

\foreach \y in {-2,-1,0,1} \draw[color=gray!40!white,fill,ultra thin] plot[samples=60,domain=60:90] ({\y+cos(\x)},{sin(\x)})--({\y},0)--plot[samples=120,domain=180:120] ({\y+1+cos(\x)},{sin(\x)});

\foreach \y in {-2,-1,0,1} \draw[color=gray!40!white,fill,ultra thin] plot[samples=120,domain=60:0] ({\y+cos(\x)},{sin(\x)})--plot[samples=180,domain=0:90] ({\y+1/2+cos(\x)/2},{sin(\x)/2});

\foreach \y in {-2,-1,0,1} \draw[color=gray!40!white,fill,ultra thin] plot[samples=180,domain=90:180] ({\y+1/2+cos(\x)/2},{sin(\x)/2})--plot[samples=240,domain=180:60] ({\y+1/3+cos(\x)/3},{sin(\x)/3});

\foreach \y in {-2,-1,0,1} \draw[color=gray!40!white,fill,ultra thin] plot[samples=120,domain=60:36.9] ({\y+1/3+cos(\x)/3},{sin(\x)/3})--plot[samples=360,domain=127:0] ({\y+3/4+cos(\x)/4},{sin(\x)/4})--plot[samples=240,domain=0:120] ({\y+2/3+cos(\x)/3},{sin(\x)/3});

\foreach \y in {-2,-1,0,1} \draw[color=gray!40!white,fill,ultra thin] plot[samples=120,domain=120:143.1] ({\y+2/3+cos(\x)/3},{sin(\x)/3})--plot[samples=360,domain=53:0] ({\y+1/4+cos(\x)/4},{sin(\x)/4})--({\y+1/2},{1/2/sqrt(3)});

\foreach \y in {-2,-1,0,1} \draw[color=gray!40!white,fill,ultra thin] plot[samples=360,domain=180:53] ({\y+1/4+cos(\x)/4},{sin(\x)/4})--plot[samples=120,domain=143.1:158.2] ({\y+2/3+cos(\x)/3},{sin(\x)/3})--plot[samples=360,domain=38.2:180] ({\y+1/5+cos(\x)/5},{sin(\x)/5});

\foreach \y in {-2,-1,0,1} \draw[color=gray!40!white,fill,ultra thin] plot[samples=360,domain=0:141.8] ({\y+4/5+cos(\x)/5},{sin(\x)/5})--plot[samples=360,domain=81.8:53.1] ({\y+5/8+cos(\x)/8},{sin(\x)/8})--plot[samples=360,domain=143.1:0] ({\y+5/6+cos(\x)/6},{sin(\x)/6});

\foreach \y in {-2,-1,0,1} \draw[color=gray!40!white,fill,ultra thin] plot[samples=360,domain=81.8:180] ({\y+5/8+cos(\x)/8},{sin(\x)/8})--plot[samples=360,domain=180:127] ({\y+3/4+cos(\x)/4},{sin(\x)/4})--plot[samples=120,domain=36.9:21.8] ({\y+1/3+cos(\x)/3},{sin(\x)/3});

\foreach \y in {-2,-1,0,1} \draw[color=gray!40!white,fill,ultra thin] plot[samples=120,domain=21.8:0] ({\y+1/3+cos(\x)/3},{sin(\x)/3})--plot[samples=120,domain=0:67.4] ({\y+7/12+cos(\x)/12},{sin(\x)/12})--plot[samples=360,domain=157.4:141.8] ({\y+4/5+cos(\x)/5},{sin(\x)/5});

\foreach \y in {-2,-1,0,1} \draw[color=gray!40!white,fill,ultra thin] plot[samples=120,domain=0:98.2] ({\y+3/8+cos(\x)/8},{sin(\x)/8})--plot[samples=360,domain=38.2:22.6] ({\y+1/5+cos(\x)/5},{sin(\x)/5})--plot[samples=120,domain=112.6:0] ({\y+5/12+cos(\x)/12},{sin(\x)/12});

\foreach \y in {-2,-1,0,1} \draw[color=gray!40!white,fill,ultra thin] plot[samples=120,domain=98.2:126.9] ({\y+3/8+cos(\x)/8},{sin(\x)/8})--plot[samples=360,domain=36.9:0] ({\y+1/6+cos(\x)/6},{sin(\x)/6})--plot[samples=120,domain=180:158.2] ({\y+2/3+cos(\x)/3},{sin(\x)/3});

\foreach \y in {-1,0,1,2} \draw[color=gray!40!white,fill,ultra thin] ({\y},2.25)--({\y},1)--plot[samples=60,domain=90:120] ({\y+cos(\x)},{sin(\x)})--({\y-.5},{sqrt(3)/2})--({\y-.5},2.25);

\draw(0,-0.5) node[below]{\textbf{(b)}};
\node[color=black] at (0,2) {$^{ \mathfrak D}$};
\node[color=black] at (0,.8) {$^{ \hat S\mathfrak D}$};
\node[color=black] at (-0.35,.5) {$^{\hat S\hat T}$};
\node[color=black] at (0.35,.5) {$^{ \hat S\hat \tau}$};
\node[color=black] at (-0.3,.25) {\fontsize{3}{3.6}\selectfont$\hat S\hat T^2$};
\node[color=black] at (0.3,.25) {\fontsize{3}{3.6}\selectfont$\hat S\hat \tau^2$};
\draw[<-,black,text=black,ultra thin] (-0.45,.15)--(-0.7,.35) node at (-0.85,.45){$_{\hat S\hat T^2\hat S}$};
\draw[<-,black,text=black,ultra thin] (0.45,.15)--(0.7,.35) node at (0.85,.45){$_{\hat S\hat \tau^2\hat S}$};
\draw[<-,black,text=black,ultra thin] (-0.275,.175)--(-0.2,-.1) node at (-0.325,-.2){$_{\hat S\hat T\hat S\hat\tau}$};
\draw[<-,black,text=black,ultra thin] (0.275,.175)--(0.2,-.1) node at (0.325,-.2){$_{\hat S\hat \tau\hat S\hat T}$};
\draw[<-,black,text=black,ultra thin] (-0.375,.1)--(-1.6,-.2) node at (-1.825,-.3){$_{\hat S\hat T^2\hat S\hat\tau}$};
\draw[<-,black,text=black,ultra thin] (0.375,.1)--(1.6,-.2) node at (1.825,-.3){$_{\hat S\hat \tau^2\hat S\hat T}$};
\node[color=black] at (-1,2) {$^{\hat \tau \mathfrak D}$};
\node[color=black] at (1,2) {$^{\hat T\mathfrak D}$};

\draw[color=black,thin](-1.5,{sqrt(3)/2})--(-1.5,2.25);
\draw[color=black,thin](1.5,{sqrt(3)/2})--(1.5,2.25);
\draw[color=black,thin](-.5,0)--(-.5,2.25);
\draw[color=black,thin](.5,0)--(.5,2.25);
\draw[color=black,thin] plot[samples=120,domain=60:120] ({cos(\x)},{sin(\x)});
\draw[color=black,thin] plot[samples=240,domain=0:120] ({cos(\x)-1},{sin(\x)});
\draw[color=black,thin] plot[samples=240,domain=60:180] ({cos(\x)+1},{sin(\x)});
\draw[color=black,thin] plot[samples=120,domain=0:60] ({cos(\x)/3-2/3},{sin(\x)/3});
\draw[color=black,thin] plot[samples=120,domain=120:180] ({cos(\x)/3+2/3},{sin(\x)/3});
\draw[color=black,thin] plot[samples=240,domain=0:120] ({cos(\x)/3-1/3},{sin(\x)/3});
\draw[color=black,thin] plot[samples=240,domain=60:180] ({cos(\x)/3+1/3},{sin(\x)/3});
\draw[color=black,thin] plot[samples=360,domain=0:180] ({cos(\x)/6-1/6},{sin(\x)/6});
\draw[color=black,thin] plot[samples=360,domain=0:180] ({cos(\x)/6+1/6},{sin(\x)/6});
\draw[color=black,thin] plot[samples=360,domain=0:180] ({cos(\x)/12-5/12},{sin(\x)/12});
\draw[color=black,thin] plot[samples=360,domain=0:180] ({cos(\x)/12+5/12},{sin(\x)/12});
\draw[color=black,thin] plot[samples=360,domain=38.2:180] ({1/5+cos(\x)/5},{sin(\x)/5});
\draw[color=black,thin] plot[samples=360,domain=0:141.8] ({-1/5+cos(\x)/5},{sin(\x)/5});
\draw[color=black,thin] plot[samples=360,domain=38.2:180] ({1/5+cos(\x)/5},{sin(\x)/5});
\draw[color=black,thin] plot[samples=360,domain=0:141.8] ({-1/5+cos(\x)/5},{sin(\x)/5});
\draw[color=black,thin] plot[samples=120,domain=0:98.2] ({3/8+cos(\x)/8},{sin(\x)/8});
\draw[color=black,thin] plot[samples=120,domain=81.8:180] ({-3/8+cos(\x)/8},{sin(\x)/8});

%\draw[ultra thin] (-2,2.25)--(-2,0);
%\draw[ultra thin] (2,2.25)--(2,0);
\draw[->](-2.3,0)--(2.3,0) node[right]{$\RE z$};
\end{tikzpicture}\end{minipage}

\begin{tikzpicture}[scale=2]
\foreach \y in {0}\draw[->]({\y},1.5)--({\y},1.625)node[above]{$\IM z$};
\draw (0,-.25)--(0,0);
\foreach \y in {0} \draw[color=gray!40!white,fill,ultra thin] plot[samples=60,domain=60:90] ({\y+cos(\x)},{sin(\x)})--({\y},0)--plot[samples=120,domain=180:120] ({\y+1+cos(\x)},{sin(\x)});

\foreach \y in {-1}  \draw[color=gray!40!white,fill,ultra thin] plot[samples=120,domain=60:0] ({\y+cos(\x)},{sin(\x)})--plot[samples=180,domain=0:90] ({\y+1/2+cos(\x)/2},{sin(\x)/2});

\foreach \y in {0} \draw[color=gray!40!white,fill,ultra thin] plot[samples=180,domain=90:180] ({\y+1/2+cos(\x)/2},{sin(\x)/2})--plot[samples=240,domain=180:60] ({\y+1/3+cos(\x)/3},{sin(\x)/3});

\foreach \y in {-1} \draw[color=gray!40!white,fill,ultra thin] plot[samples=120,domain=60:36.9] ({\y+1/3+cos(\x)/3},{sin(\x)/3})--plot[samples=360,domain=127:0] ({\y+3/4+cos(\x)/4},{sin(\x)/4})--plot[samples=240,domain=0:120] ({\y+2/3+cos(\x)/3},{sin(\x)/3});

\foreach \y in {0} \draw[color=gray!40!white,fill,ultra thin] plot[samples=120,domain=120:143.1] ({\y+2/3+cos(\x)/3},{sin(\x)/3})--plot[samples=360,domain=53:0] ({\y+1/4+cos(\x)/4},{sin(\x)/4})--({\y+1/2},{1/2/sqrt(3)});

\foreach \y in {0} \draw[color=gray!40!white,fill,ultra thin] plot[samples=360,domain=180:53] ({\y+1/4+cos(\x)/4},{sin(\x)/4})--plot[samples=120,domain=143.1:158.2] ({\y+2/3+cos(\x)/3},{sin(\x)/3})--plot[samples=360,domain=38.2:180] ({\y+1/5+cos(\x)/5},{sin(\x)/5});

\foreach \y in {-1}  \draw[color=gray!40!white,fill,ultra thin] plot[samples=360,domain=0:141.8] ({\y+4/5+cos(\x)/5},{sin(\x)/5})--plot[samples=360,domain=81.8:53.1] ({\y+5/8+cos(\x)/8},{sin(\x)/8})--plot[samples=360,domain=143.1:0] ({\y+5/6+cos(\x)/6},{sin(\x)/6});

\foreach \y in {-1}  \draw[color=gray!40!white,fill,ultra thin] plot[samples=360,domain=81.8:180] ({\y+5/8+cos(\x)/8},{sin(\x)/8})--plot[samples=360,domain=180:127] ({\y+3/4+cos(\x)/4},{sin(\x)/4})--plot[samples=120,domain=36.9:21.8] ({\y+1/3+cos(\x)/3},{sin(\x)/3});

\foreach \y in {-1}  \draw[color=gray!40!white,fill,ultra thin] plot[samples=120,domain=21.8:0] ({\y+1/3+cos(\x)/3},{sin(\x)/3})--plot[samples=120,domain=0:67.4] ({\y+7/12+cos(\x)/12},{sin(\x)/12})--plot[samples=360,domain=157.4:141.8] ({\y+4/5+cos(\x)/5},{sin(\x)/5});

\foreach \y in {0} \draw[color=gray!40!white,fill,ultra thin] plot[samples=120,domain=0:98.2] ({\y+3/8+cos(\x)/8},{sin(\x)/8})--plot[samples=360,domain=38.2:22.6] ({\y+1/5+cos(\x)/5},{sin(\x)/5})--plot[samples=120,domain=112.6:0] ({\y+5/12+cos(\x)/12},{sin(\x)/12});

\foreach \y in {0} \draw[color=gray!40!white,fill,ultra thin] plot[samples=120,domain=98.2:126.9] ({\y+3/8+cos(\x)/8},{sin(\x)/8})--plot[samples=360,domain=36.9:0] ({\y+1/6+cos(\x)/6},{sin(\x)/6})--plot[samples=120,domain=180:158.2] ({\y+2/3+cos(\x)/3},{sin(\x)/3});

\foreach \y in {0} \draw[color=gray!40!white,fill,ultra thin] ({\y},1.5)--({\y},1)--plot[samples=60,domain=90:120] ({\y+cos(\x)},{sin(\x)})--({\y-.5},{sqrt(3)/2})--({\y-.5},1.5);

\draw[->,color=green,line width=1pt] (.5,0)--(.5,.8);
\draw[->,color=green,line width=1pt] (-.5,0)--(-.5,.8);
\draw[->,color=green,line width=1pt] (.5,0)--(.5,.75);
\draw[->,color=green,line width=1pt] (-.5,0)--(-.5,.75);
\draw[->,color=green,line width=1pt] (.5,0)--(.5,.7);
\draw[->,color=green,line width=1pt] (-.5,0)--(-.5,.7);
\draw[color=green,line width=1pt] (.5,0)--(.5,1.5);
\draw[color=green,line width=1pt] (-.5,0)--(-.5,1.5);
\draw[->,color=blue,line width=.5pt] plot[samples=20,domain=100:110] ({cos(\x)/12-5/12},{sin(\x)/12});
\draw[->,color=blue,line width=.5pt] plot[samples=20,domain=120:130] ({cos(\x)/12-5/12},{sin(\x)/12});
\draw[->,color=blue,line width=.5pt] plot[samples=20,domain=100:90] ({cos(\x)/12+5/12},{sin(\x)/12});
\draw[->,color=blue,line width=.5pt] plot[samples=20,domain=80:70] ({cos(\x)/12+5/12},{sin(\x)/12});
\draw[color=blue,line width=1pt] plot[samples=360,domain=0:180] ({cos(\x)/12-5/12},{sin(\x)/12});
\draw[color=blue,line width=1pt] plot[samples=360,domain=0:180] ({cos(\x)/12+5/12},{sin(\x)/12});
\draw[->,color=red,line width=.5pt] plot[samples=20,domain=90:100] ({cos(\x)/6-1/6},{sin(\x)/6});
\draw[->,color=red,line width=.5pt] plot[samples=20,domain=100:90] ({cos(\x)/6+1/6},{sin(\x)/6});
\draw[color=red,line width=1pt] plot[samples=360,domain=0:180] ({cos(\x)/6-1/6},{sin(\x)/6});
\draw[color=red,line width=1pt] plot[samples=360,domain=0:180] ({cos(\x)/6+1/6},{sin(\x)/6});

\foreach \y in {0}
\draw[->]({\y-0.6},0)--({\y+0.6},0)node[right]{$\RE z$};

\draw({1/2},0)node[below]{$^{\frac12} $};
%\draw({-1/2},0)node[below]{$^{-\frac12} $};
\draw({1/3},0)node[below]{$^{\frac13} $};
%\draw({-1/3},0)node[below]{$^{-\frac13} $};
\draw(0,-0.25) node[below]{\textbf{(c)}};
\node[color=black] at (0.4,1.375) {$^{ \mathfrak D_{6}}$};
\end{tikzpicture}\hspace{-.4em}\begin{tikzpicture}[scale=2]

\foreach \y in {0}\draw[->]({\y},-.25)--({\y},1.625)node[above]{$\IM z$};
\foreach \y in {0} \draw[color=gray!20!white,fill,ultra thin] ({\y+1/2},1.5)--({\y+1/2},0)--plot[samples=360,domain=0:180] ({\y+5/12+cos(\x)/12},{sin(\x)/12})--plot[samples=360,domain=0:180] ({\y+1/6+cos(\x)/6},{sin(\x)/6})--({\y},{0})--({\y},1.5);

\foreach \y in {0}
\draw[->]({\y-0.15},0)--({\y+0.6},0)node[right]{$\RE z$};

\draw[->,color=green,line width=1pt] (.5,0)--(.5,.8);
\draw[->,color=green,line width=1pt] (.5,0)--(.5,.75);
%\draw[->,color=green,line width=1pt] (.5,0)--(.5,.7);
\draw[color=green,line width=1pt] (.5,0)--(.5,1.5);
\draw[->,color=blue,line width=.1pt] plot[samples=20,domain=60:50] ({5/12+cos(\x)/12},{sin(\x)/12});
\draw[->,color=blue,line width=.1pt] plot[samples=20,domain=140:150] ({5/12+cos(\x)/12},{sin(\x)/12});
\draw[color=blue,line width=1pt] plot[samples=360,domain=0:180] ({cos(\x)/12+5/12},{sin(\x)/12});
\draw[color=brown,line width=1pt] (0,0)--(0,1.5);
%\draw[->,color=green,line width=.5pt] plot[samples=20,domain=100:110] ({cos(\x)/6+1/6},{sin(\x)/6});
\draw[->,color=green,line width=.5pt] plot[samples=20,domain=80:90] ({cos(\x)/6+1/6},{sin(\x)/6});
\draw[->,color=green,line width=.5pt] plot[samples=20,domain=90:100] ({cos(\x)/6+1/6},{sin(\x)/6});
\draw[color=green,line width=1pt] plot[samples=360,domain=0:180] ({cos(\x)/6+1/6},{sin(\x)/6});
\draw[>-,color=brown,line width=1pt] (0,0.8)--(0,0);
\draw[>-,color=brown,line width=1pt] (0,0.75)--(0,0);
\draw[>-,color=brown,line width=1pt] (0,0.7)--(0,0);
\draw[>-,color=brown,line width=1pt] (0,0.2)--(0,0.5);
\draw[>-,color=brown,line width=1pt] (0,0.25)--(0,.5);
\draw[>-,color=brown,line width=1pt] (0,0.15)--(0,.5);

\draw[dotted,thick] plot[samples=180,domain=0:90] ({cos(\x)/sqrt(6)},{sin(\x)/sqrt(6)});

\draw({1/2},0)node[below]{$^{\frac12} $};
\draw({1/3},0)node[below]{$^{\frac13} $};
\draw[fill=black](0,{1/sqrt(6)})circle[radius=0.02]node at (-.1,{1/sqrt(6)-.02}) {$_{\frac{i}{\sqrt6}}$};
\draw[fill=black]({2/5},{1/5/sqrt(6)})circle[radius=0.02];

\node[color=black] at (0.375,1.375) {$^{ \mathfrak D_{6,6}}$};
\draw(0.25,-0.25) node[below]{\textbf{(d)}};

\end{tikzpicture}\hspace{.12em}\begin{tikzpicture}[scale=2]

\foreach \y in {0}\draw[->]({\y},-.25)--({\y},1.625)node[above]{$\IM z$};

\draw[dotted,thick] (.5,1)--(.5,0);
\foreach \y in {0} \draw[color=gray!20!white,fill,ultra thin] ({\y+1/2},1.5)--({\y+1/2},{.5/sqrt(3)})--plot[samples=360,domain=90:150] ({.5+.5/sqrt(3)*cos(\x)},{.5/sqrt(3)*sin(\x)})--plot[samples=360,domain=60:180] ({\y+1/6+cos(\x)/6},{sin(\x)/6})--({\y},{0})--plot[samples=360,domain=0:120] ({\y-1/6+cos(\x)/6},{sin(\x)/6})--plot[samples=360,domain=30:90] ({-.5+.5/sqrt(3)*cos(\x)},{.5/sqrt(3)*sin(\x)})--({\y-1/2},{.5/sqrt(3)})--({\y-1/2},1.5);

\draw[<-](-.5,{.5/sqrt(3)})--(-.5,{.5/sqrt(3)-.08});
\draw[<-](-.5,0)--(-.5,.08);
\node at (-.5,{.25/sqrt(3)}) {\fontsize{4}{4.8}\selectfont${\sqrt3/6}$};
\foreach \y in {0}\draw[->]({\y-0.6},0)--({\y+0.6},0)node[right]{$\RE z$};

\draw[->,color=green,line width=1pt] (.5,{.5/sqrt(3)})--(.5,.8);
\draw[->,color=green,line width=1pt] (-.5,{.5/sqrt(3)})--(-.5,.8);
\draw[->,color=green,line width=1pt] (.5,{.5/sqrt(3)})--(.5,.75);
\draw[->,color=green,line width=1pt] (-.5,{.5/sqrt(3)})--(-.5,.75);
\draw[->,color=green,line width=1pt] (.5,{.5/sqrt(3)})--(.5,.7);
\draw[->,color=green,line width=1pt] (-.5,{.5/sqrt(3)})--(-.5,.7);
\draw[color=green,line width=1pt] (.5,{.5/sqrt(3)})--(.5,1.5);
\draw[color=green,line width=1pt] (-.5,{.5/sqrt(3)})--(-.5,1.5);

\draw[dotted,thick] plot[samples=180,domain=90:180] ({.5+.5/sqrt(3)*cos(\x)},{.5/sqrt(3)*sin(\x)});
\draw[dotted,thick] plot[samples=180,domain=0:90] ({-.5+.5/sqrt(3)*cos(\x)},{.5/sqrt(3)*sin(\x)});
\draw[dotted,thick] plot[samples=360,domain=0:180] ({cos(\x)/6-1/6},{sin(\x)/6});
\draw[dotted,thick] plot[samples=360,domain=0:180] ({cos(\x)/6+1/6},{sin(\x)/6});

\draw[->,color=cyan,line width=.5pt] plot[samples=20,domain=50:60] ({-.5+.5/sqrt(3)*cos(\x)},{.5/sqrt(3)*sin(\x)});
\draw[->,color=cyan,line width=.5pt] plot[samples=20,domain=60:70] ({-.5+.5/sqrt(3)*cos(\x)},{.5/sqrt(3)*sin(\x)});
\draw[->,color=cyan,line width=.5pt] plot[samples=20,domain=130:120] ({.5+.5/sqrt(3)*cos(\x)},{.5/sqrt(3)*sin(\x)});
\draw[->,color=cyan,line width=.5pt] plot[samples=20,domain=120:110] ({.5+.5/sqrt(3)*cos(\x)},{.5/sqrt(3)*sin(\x)});
\draw[color=cyan,line width=1pt] plot[samples=120,domain=90:150] ({.5+.5/sqrt(3)*cos(\x)},{.5/sqrt(3)*sin(\x)});
\draw[color=cyan,line width=1pt] plot[samples=120,domain=30:90] ({-.5+.5/sqrt(3)*cos(\x)},{.5/sqrt(3)*sin(\x)});
\draw[->,color=red,line width=.5pt] plot[samples=20,domain=90:100] ({cos(\x)/6-1/6},{sin(\x)/6});
\draw[->,color=red,line width=.5pt] plot[samples=20,domain=100:90] ({cos(\x)/6+1/6},{sin(\x)/6});
\draw[color=red,line width=1pt] plot[samples=240,domain=0:120] ({cos(\x)/6-1/6},{sin(\x)/6});
\draw[color=red,line width=1pt] plot[samples=240,domain=60:180] ({cos(\x)/6+1/6},{sin(\x)/6});

\draw({1/2},0)node[below]{$^{\frac12} $};
\draw({1/3},0)node[below]{$^{\frac13} $};

\node[color=black] at (0.375,1.375) {$^{ \mathfrak D_{6,3}}$};
\draw(0,-0.25) node[below]{\textbf{(e)}};\end{tikzpicture}\hspace{.12em}\begin{tikzpicture}[scale=2]

\foreach \y in {0}\draw[->]({\y},-.25)--({\y},1.625)node[above]{$\IM z$};
\foreach \y in {0} \draw[color=gray!20!white,fill,ultra thin] ({\y+1/2},1.5)--({\y+1/2},0)-- plot[samples=200,domain=0:101.5] ({5/12+cos(\x)/12},{sin(\x)/12})--plot[samples=360,domain=11.5:168.5] ({cos(\x)/sqrt(6)},{sin(\x)/sqrt(6)})-- plot[samples=200,domain=78.5:180] ({-5/12+cos(\x)/12},{sin(\x)/12})--({\y-1/2},0)--({\y-1/2},1.5);

\foreach \y in {0}\draw[->]({\y-0.6},0)--({\y+0.6},0)node[right]{$\RE z$};

\draw[->,color=green,line width=1pt] (.5,0)--(.5,.8);
\draw[->,color=green,line width=1pt] (-.5,0)--(-.5,.8);
\draw[->,color=green,line width=1pt] (.5,0)--(.5,.75);
\draw[->,color=green,line width=1pt] (-.5,0)--(-.5,.75);
\draw[->,color=green,line width=1pt] (.5,0)--(.5,.7);
\draw[->,color=green,line width=1pt] (-.5,0)--(-.5,.7);
\draw[color=green,line width=1pt] (.5,0)--(.5,1.5);
\draw[color=green,line width=1pt] (-.5,0)--(-.5,1.5);

\draw[dotted,thick] plot[samples=180,domain=90:180] ({cos(\x)/sqrt(6)},{sin(\x)/sqrt(6)});
\draw[dotted,thick] plot[samples=180,domain=0:90] ({cos(\x)/sqrt(6)},{sin(\x)/sqrt(6)});
\draw[dotted,thick] plot[samples=360,domain=0:180] ({5/12+cos(\x)/12},{sin(\x)/12});
\draw[dotted,thick] plot[samples=360,domain=0:180] ({-5/12+cos(\x)/12},{sin(\x)/12});
\draw[->,color=blue,line width=.5pt] plot[samples=20,domain=130:140] ({-5/12+cos(\x)/12},{sin(\x)/12});
\draw[->,color=blue,line width=.5pt] plot[samples=20,domain=110:120] ({-5/12+cos(\x)/12},{sin(\x)/12});
\draw[->,color=blue,line width=.1pt] plot[samples=20,domain=70:60] ({5/12+cos(\x)/12},{sin(\x)/12});
\draw[->,color=blue,line width=.1pt] plot[samples=20,domain=50:40] ({5/12+cos(\x)/12},{sin(\x)/12});
\draw[color=blue,line width=1pt] plot[samples=200,domain=0:101.5] ({5/12+cos(\x)/12},{sin(\x)/12});
\draw[color=blue,line width=1pt] plot[samples=200,domain=78.5:180] ({-5/12+cos(\x)/12},{sin(\x)/12});
\draw[->,color=magenta,line width=.5pt] plot[samples=20,domain=130:120] ({cos(\x)/sqrt(6)},{sin(\x)/sqrt(6)});
\draw[->,color=magenta,line width=.5pt] plot[samples=20,domain=150:160] ({cos(\x)/sqrt(6)},{sin(\x)/sqrt(6)});
\draw[->,color=yellow,line width=.5pt] plot[samples=20,domain=30:20] ({cos(\x)/sqrt(6)},{sin(\x)/sqrt(6)});
\draw[->,color=yellow,line width=.5pt] plot[samples=20,domain=50:60] ({cos(\x)/sqrt(6)},{sin(\x)/sqrt(6)});
\draw[color=yellow,line width=1pt] plot[samples=360,domain=11.5:90] ({cos(\x)/sqrt(6)},{sin(\x)/sqrt(6)});
\draw[color=magenta,line width=1pt] plot[samples=360,domain=90:168.5] ({cos(\x)/sqrt(6)},{sin(\x)/sqrt(6)});

\draw[dotted,thick] ({1/3},0)--({1/3},{sqrt(2)/6});
\draw[dotted,thick] ({-1/3},0)--({-1/3},{sqrt(2)/6});
\draw[fill=yellow,draw=yellow]({1/3},{sqrt(2)/6})circle[radius=0.02];
\draw[fill=magenta,draw=magenta]({-1/3},{sqrt(2)/6})circle[radius=0.02];

\draw({1/2},0)node[below]{$^{\frac12} $};
\draw({1/3},0)node[below]{$^{\frac13} $};

\draw[fill=black](0,{1/sqrt(6)})circle[radius=0.02]node at (0,{1/sqrt(6)+.125}) {$_{\frac{i}{\sqrt6}}$};
\node[color=black] at (0.375,1.375) {$^{ \mathfrak D_{6,2}}$};
\draw(0,-0.25) node[below]{\textbf{(f)}};
\end{tikzpicture}

\caption{(Adapted from  \cite[Fig.~61]{FrickeKlein}.) \textbf{a} Fundamental domain $ \mathfrak D$ of  $ \Gamma_0(1)=SL(2,\mathbb Z)$. The moduli space $ Y_0(1)(\mathbb C)=SL(2,\mathbb Z)\backslash \mathfrak H$ is  a quotient space of $ \mathfrak D$ that identifies the corresponding sides of the boundary   $\partial \mathfrak D$  along  the  \textit{arrows}. \textbf{b} Tessellation of the upper half-plane $ \mathfrak H$ by   successive translations  [generator $\hat T=\hat \tau^{-1}:z\mapsto z+1 $]
   and inversions    [generator $\hat S=\hat S^{-1}:z\mapsto-1/z$] of the fundamental domain $\mathfrak D$. Each tile  is  subdivided and painted in \textit{gray} or \textit{white}   according as the pre-image satisfies  $ \RE z<0$ or $ \RE z>0$ in the fundamental domain $ \mathfrak D$. \textbf{c} Fundamental domain $ \mathfrak D_6$ of $\Gamma_0(6)$, dissected with $ SL(2,\mathbb Z)$-tiles  (cf.~panel \textbf{b}). Gluing the three pairs of boundary sides of $ \mathfrak D_6$  along the \textit{arrows}, one obtains the moduli space $ Y_0(6)(\mathbb C)=\Gamma_0(6)\backslash\mathfrak H$.  \textbf{d}--\textbf{f} Fundamental domains $ \mathfrak D_{6,k}$ for the Chan--Zudilin groups $ \Gamma_0(6)_{+k}=\langle\Gamma_0(6),\hat W_k\rangle$, where $ \hat W_2z= (2z-1)/(6z-2)$, $ \hat W_3z= (3z-2)/(6z-3)$, and $ \hat W_6z= -1/(6z)$.
 \label{fig:fun_domains}
}
       \end{figure}

Following the notation of Chan--Zudilin \cite{ChanZudilin2010}, we write $ \hat W_3=\frac{1}{\sqrt{3}}\left(\begin{smallmatrix}3&-2\\6&-3\end{smallmatrix}\right)$ and construct a group $ \Gamma_0(6)_{+3}=\langle\Gamma_0(6),\hat W_3\rangle$  by adjoining $ \hat W_3$ to $ \Gamma_0(6)$. This group is of particular importance to the following motivic integral  \cite[\S2]{BlochKerrVanhove2015}: \begin{align}\begin{split}&
 \int_0^\infty I_0(\sqrt{u}t)[K_0(t)]^4t\D t\\={}&\frac{1}{8}\int_0^\infty\frac{\D X}{X}\int_0^\infty\frac{\D Y}{Y}\int_0^\infty\frac{\D Z}{Z}\frac{1}{(1+X+Y+Z)\left( 1+\frac{1}{X} +\frac{1}{Y}+\frac{1}{Z}\right)-u}.\end{split}
\end{align} As pointed out in Verrill's thesis \cite[Theorems 1 and 2]{Verrill1996}, the differential equation $\widetilde L_3f(u)=0 $ (cf.~Table \ref{tab:VanhoveL}) is the Picard--Fuchs equation for a pencil of $K3$ surfaces:\begin{align}
\mathscr X_{A_3}:(1+X+Y+Z)\left( 1+\frac{1}{X} +\frac{1}{Y}+\frac{1}{Z}\right)=u,
\end{align}whose monodromy group is isomorphic to $\overline{ \Gamma_0(6)_{+3}}$,  the image of $\Gamma_0(6)_{+3} $ after quotienting out by scalars. As a consequence, the general solutions to  $\widetilde L_3f(u)=0 $ admit a modular parametrization \begin{align}
f(u)=Z_{6,3}(z)(c_0+c_1z+c_2z^2),
\end{align} where $ c_0,c_1,c_2\in\mathbb C$ are constants, and \begin{align}
u=-64X_{6,3}(z):={}&-\left[\frac{ 2\eta (2z ) \eta (6 z )}{\eta (z ) \eta (3z)}\right]^{6},\\Z_{6,3}(z):={}&\frac{[\eta(z)\eta(3z)]^{4}}{[\eta(2z)\eta(6z)]^{2}}.
\end{align}Here,  $ X_{6,3}(z)$ is a modular function  on  $\Gamma_0(6)_{+3}$ \cite[(2.2)]{ChanZudilin2010}, while $ Z_{6,3}(z)$ is a modular form of weight 2 and level 6 \cite[(2.5)]{ChanZudilin2010}.

Since $ \int_0^\infty J_0(\sqrt{-u}t)I_0(t)[K_0(t)]^3t\D t$ is annihilated by Vanhove's operator $ \widetilde L_3$, we can establish the following modular parametrization  \begin{align}
\int_0^\infty J_0\left(8\sqrt{\smash[b]{X_{6,3}(z)}}t\right)I_0(t)[K_0(t)]^3t\D t=\frac{\pi^{2}}{16}Z_{6,3}(z)\label{eq:IKKK_Hankel}
\end{align}through asymptotic analysis of both sides  near the infinite cusp $  z\to i\infty$ [whereupon the left-hand side tends to $ \int_0^\infty I_0(t)[K_0(t)]^3t\D t=\IKM(1,3;1)$ and the right-hand side tends to $ \frac{\pi^2}{16}=\IKM(1,3;1)$]. Here, the positive $\IM z $-axis corresponds to $ \sqrt{-u}=8\sqrt{\smash[b]{X_{6,3}(z)}}\in(0,\infty)$. In a similar fashion, one can show that\begin{align}
\int_0^\infty J_{0}\left(8\sqrt{\smash[b]{X_{6,3}(z)}}t\right)[I_{0}(t)]^{2}[K_0(t)]^{2}t\D t={}&\frac{\pi z}{4i}Z_{6,3}(z)\label{eq:IIKK_Hankel}
\end{align} holds for $ z/i>0$. Now, we can prove Theorem \ref{thm:specL}\ref{itm:8Bessel} by throwing  \eqref{eq:IKKK_Hankel}--\eqref{eq:IIKK_Hankel} into the Parseval--Plancherel identity for Hankel transforms \cite[(16)]{BBBG2008} \begin{align}
\int_0^\infty f(t)g(t)t\D t=\int_0^\infty\left[ \int_0^\infty J_0(xt)f(t)t\D t \right] \left[ \int_0^\infty J_0(x\tau)g(\tau)\tau\D \tau \right] x\D x,
\end{align} and noting that \cite[Theorem 5.1.1]{Zhou2017WEF}\begin{align}
[Z_{6,3}(z)]^{2}\frac{\D  X_{6,3}(z)}{\D z}=2\pi i f_{6,6}(z).
\end{align}

Actually, we can  say a little more about the 8-Bessel problem than what has been stated in Theorem \ref{thm:specL}\ref{itm:8Bessel}. With heavy use of Wick rotations and integrations of $ f_{6,6}(z)z^n,n\in\{0,1,2,3,4\}$ over the boundary $ \partial\mathfrak  D_{6,3}$ of the fundamental domain $\mathfrak D_{6,3}$ (Fig.~\ref{fig:fun_domains}\textit{e}), one may show that \cite[\S5]{Zhou2017WEF}\begin{align}
\frac{L(f_{6,6},5)}{L(f_{6,6},3)} =\frac{2\pi^{2}}{21}.
\end{align}Comparing this to Theorem \ref{thm:specL}\ref{itm:8Bessel}, one  confirms a sum rule $9\pi^2\IKM(4,4;1)=14\IKM(2,6;1)$, which was originally proposed in 2008  \cite[at the end of \S6.3, between (228) and (229)]{BBBG2008}.
  \section{Some linear sum rules of Feynman diagrams\label{sec:Crandall}}
The contour integral in \eqref{eq:HHM}  is no longer convergent when $n\in\mathbb Z\cap[m,\infty)$, so the methods in \S\ref{subsec:Wick} do not directly apply to Theorem \ref{thm:B3G_Crandall}\ref{itm:Crandall}, which involves Bessel moments $\IKM(a,b;n)$ with high orders $n\geq(a+b-2)/2$. In \cite[\S3]{HB1}, I used  a real-analytic approach (based on Hilbert transforms), to circumvent divergent contour integrals while handling  Theorem \ref{thm:B3G_Crandall}\ref{itm:Crandall}. After email exchanges
with Mark van Hoe{\ij} on Oct.~24, 2017, about the asymptotic expansion of $ [I_0(x)K_0(x)]^4$ for large and positive $ x$ (see van Hoe\ij's update on \cite{OEISA262961}, dated  Oct.~23, 2017), I\ realized that the divergence problem in the complex-analytic approach can be amended by subtracting Laurent polynomials from $ [H_0^{(1)}(z)H_0^{(2)}(z)]^m$. This amendment is described in the lemma below. \begin{lemma}[Asymptotic expansions and Bessel moments]\label{lm:vanHoeij}We have the following asymptotic expansion as $|z|\to\infty,-\pi<\arg z<\pi $:\begin{align}\begin{split}&
\left(\frac{\pi}{2}\right)^{2m}[H_0^{(1)}(z)H_0^{(2)}(z)]^m\\={}&\sum_{n=1}^N\frac{(-1)^{n+1}}{z^{2n+m-2}}\int_0^\infty \frac{[\pi I_0(t)+i K_{0}(t)]^{m}-[\pi I_0(t)-i K_{0}(t)]^{m}}{\pi i}[K_0(t)]^{m}t^{2n+m-3}\D t\\{}&+O\left( \frac{1}{|z|^{2N+m}} \right),\end{split}
\end{align}where $ m,N\in\mathbb Z_{>0}$.\end{lemma}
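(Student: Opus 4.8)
The plan is to read off the coefficients of the stated asymptotic expansion as contour integrals along the imaginary axis, and then to recognize those contour integrals as the advertised Bessel moments by way of the boundary-value formula \eqref{eq:H1H2Imz}. Throughout, I would write $g(z):=\left(\frac{\pi}{2}\right)^{2m}[H_0^{(1)}(z)H_0^{(2)}(z)]^m$, which is analytic on $\mathbb C\smallsetminus(-\infty,0]$.

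First I would fix the \emph{shape} of the expansion. Since $H_0^{(1)}(z)H_0^{(2)}(z)=J_0(z)^2+Y_0(z)^2$ is the squared modulus of the Hankel function, the classical expansion \cite[\S7.51]{Watson1944Bessel} --- equivalently, the product of the two series in \eqref{eq:H0_asympt}, whose exponential prefactors $e^{\pm i(z-\pi/4)}$ cancel --- shows that $H_0^{(1)}(z)H_0^{(2)}(z)\sim\frac{2}{\pi z}(1+O(z^{-2}))$ is an expansion in inverse powers of $z^2$ times $z^{-1}$. Raising to the $m$-th power and multiplying by $\left(\frac{\pi}{2}\right)^{2m}$, one obtains $g(z)\sim\sum_{n\geq1}\tilde c_n z^{-(2n+m-2)}$, supported on precisely the powers appearing in the statement. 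It then remains to identify $\tilde c_n$.

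Next I would extract $\tilde c_n$ by a residue-at-infinity argument, for which the even-offset structure just established is decisive. Because $g$ is analytic in the open right half-plane, $\oint g(z)z^{2n+m-3}\,\mathrm dz=0$ over the closed contour formed by the segment $[-iR,iR]$ and the right semicircle $C_R$ (the small indentation at the origin being harmless, as $g(z)z^{2n+m-3}=O(z^{2n+m-3}(\log z)^{2m})$ with $2n+m-3\geq0$). Expanding $g$ on $C_R$ through its asymptotic series produces the terms $\tilde c_k z^{2(n-k)-1}$: the contributions with $k<n$ carry the \emph{even} exponent $2(n-k)$ and hence cancel between the endpoints $\pm iR$, the contributions with $k>n$ vanish as $R\to\infty$, and only the $k=n$ term survives, contributing $\int_{C_R}z^{-1}\,\mathrm dz=-\pi i$. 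This is exactly the Laurent-polynomial subtraction: after removing the finitely many offending powers the residue calculation isolates $\tilde c_n$, yielding $\lim_{R\to\infty}\int_{-iR}^{iR}g(z)z^{2n+m-3}\,\mathrm dz=\pi i\,\tilde c_n$.

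Finally I would evaluate the same limit directly on the imaginary axis. Parametrizing $z=is$ and using \eqref{eq:H1H2Imz} together with $K_0\mp\pi i I_0=\mp i(\pi I_0\pm iK_0)$, the boundary values are $g(\pm is)=(\mp i)^m[K_0(s)]^m[\pi I_0(s)\pm iK_0(s)]^m$ for $s>0$. Splitting the axis at the origin and keeping track of the powers of $i$, the integrand collapses to $(-1)^{n+1}[K_0(s)]^m s^{2n+m-3}$ times the difference $[\pi I_0(s)+iK_0(s)]^m-[\pi I_0(s)-iK_0(s)]^m$; this difference annihilates the dominant $e^{ms}$ growth of $[\pi I_0]^m$, leaving an $e^{-2s}$ decay that secures convergence for every $n$. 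Matching with the previous step gives $\tilde c_n=(-1)^{n+1}\int_0^\infty\frac{[\pi I_0+iK_0]^m-[\pi I_0-iK_0]^m}{\pi i}[K_0]^m s^{2n+m-3}\,\mathrm ds$, which is the claim. I expect the middle step to be the main obstacle: making the residue-at-infinity extraction rigorous --- justifying the termwise integration along $C_R$ and verifying that the divergent positive-power contributions genuinely cancel rather than being discarded --- is precisely what the even-power structure of $J_0^2+Y_0^2$ makes tractable.
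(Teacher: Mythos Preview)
Your proof is correct and follows the same strategy as the paper: establish the even-offset shape of the asymptotic expansion from \eqref{eq:H0_asympt}, extract each coefficient via a contour identity over the imaginary axis closed through the right half-plane, and identify the imaginary-axis integral with the stated Bessel moment using \eqref{eq:H1H2Imz}. The only packaging difference is where the coefficient is read off: the paper subtracts the Laurent polynomial $\sum_n a_{m,n}z^{-(2n+m-2)}$ first, so that the remainder $F_{m,N}(z)z^{2N+m-3}=O(z^{-3})$ makes the large-arc contribution vanish outright, and then picks up $a_{m,N}$ from the $z^{-1}$ term over the \emph{small} semicircle $C_\varepsilon$ at the origin; you instead expand $g$ on the \emph{large} semicircle $C_R$ and use that the monomials $z^{2(n-k)-1}$ with $k<n$ have antiderivatives $z^{2(n-k)}/(2(n-k))$ taking equal values at $\pm iR$, so their arc integrals vanish exactly. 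These are dual formulations of the same residue-at-infinity computation, and you correctly note their equivalence. The paper's version is marginally cleaner in that the subtraction eliminates any need to track growing terms on $C_R$; your version makes the mechanism behind the ``Laurent-polynomial subtraction'' (the phrase used just before the lemma) more transparent.
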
\begin{proof}From \eqref{eq:H0_asympt}, we know that  as $|z|\to\infty,-\pi<\arg z<\pi $, there exist  certain constant coefficients $ a_{m,n}$ such that the following relation holds:\begin{align}
F_{m,N}(z):=[H_0^{(1)}(z)H_0^{(2)}(z)]^m-\sum_{n=1}^N\frac{a_{m,n}}{z^{2n+m-2}}=O\left( \frac{1}{|z|^{2N+m}} \right).
\end{align} To determine   $ a_{m,N}$, we consider \begin{align}\lim_{\varepsilon\to0^+}
\lim_{T\to\infty}\left( \int_{-iT}^{-i\varepsilon} +\int_{C_{\varepsilon}}+\int_{i\varepsilon}^{iT}\right)F_{m,N}(z)z^{2N+m-3}\D z,
\end{align}where $C_{\varepsilon} $ is a semi-circular arc in the right half-plane, joining $ -i\varepsilon$ to $ i\varepsilon$. For each fixed $ \varepsilon>0$, the contour integral in question tends to zero, as $ T\to\infty$, because we can close the contour to the right. Recalling \eqref{eq:H1H2Imz}, and integrating  the Laurent polynomial over  $C_{\varepsilon} $, we arrive at the claimed result.  \qed\end{proof}

Before moving onto the proof of Theorem \ref{thm:B3G_Crandall}\ref{itm:Crandall} in the next proposition, we point out that one can also generalize the method in the last lemma into other cancelation formulae. For example, in \cite[Lemma 3.3]{Zhou2018LaportaSunrise}, we used  a vanishing contour integral\begin{align}
\lim_{T\to\infty}\int_{-iT}^{iT}H_0^{(1)}(z)H_0^{(2)}(z)\left\{[H_0^{(1)}(z)H_0^{(2)}(z)]^{2}-\frac{4}{\pi^{2}z^2}\right\}z^3\D z=0
\end{align}to prove \begin{align}
\int_0^\infty I_0(t)[K_0(t)]^5t^{3}\D t={}&\frac{\pi^2}{3}\int_0^\infty I_0(t)K_0(t)\left\{[I_{0}(t)]^{2} [K_0(t)]^2-\frac{1}{4t^{2}}\right\}t^{3}\D t,
\end{align}which paved way for the verification of a conjecture  \cite[(1.11)]{Zhou2018LaportaSunrise} due to  Laporta \cite[(29)]{Laporta:2017okg} and Broadhurst (private communication on Nov.~10, 2017).

Thanks to van Hoeij's observation that led to Lemma \ref{lm:vanHoeij}, we see that the expression $C_{m,n}$ in \eqref{eq:Crandall_int} evaluates to a rational number [cf.~\eqref{eq:H0_asympt}], and these sequences of rational numbers satisfy a   discrete convolution relation with respect to the power $m$. To show that  $ C_{m,n}$ is in fact  a positive integer, it now suffices to prove that, for each  $\ell\in\mathbb Z_{>0}$, \begin{align}C_{1,\ell}={}&\frac{ [(2 \ell-2)!]^3}{2^{2\ell-2}[(\ell-1)!]^4}=[(2\ell-3)!!]^{2}{2 \ell-2\choose \ell-1}\intertext{and}
C_{2,\ell}={}&\frac{1}{2^{4(\ell-1)}}\sum_{k=1}^{\ell}  \frac{ [(2 \ell-2k)!]^3}{ [( \ell-k)!]^4}\frac{ [(2k-2)!]^3}{ [(k-1)!]^4}
\end{align}are both integers. Here, we have  $ {n\choose k}:=\frac{n!}{k!(n-k)!}\in\mathbb Z$ for $ n\in\mathbb Z_{\geq0},k\in\mathbb Z\cap[0,n]$,  and   $ (2n-1)!!:=(2n)!/(n!2^n)\in\mathbb Z$ for $n\in\mathbb Z_{\geq0}$, so the statement  $C_{1,\ell} \in\mathbb Z$ holds true. The integrality of $C_{2,\ell} $ will be explained below.\begin{proposition}
[An integer sequence]For each $ \ell\in\mathbb Z_{>0}$, the number $ \alpha_\ell:=C_{2,\ell}$ is a positive integer.\end{proposition}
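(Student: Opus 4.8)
The plan is to localize the whole question at the single prime $2$. First I would re-index the double sum by $i=\ell-k$ and $j=k-1$, so that $i+j=\ell-1$ and the summand becomes $\frac{[(2i)!]^3}{(i!)^4}\frac{[(2j)!]^3}{(j!)^4}=2^{2(\ell-1)}b_ib_j$, where $b_j:=C_{1,j+1}=\frac{[(2j)!]^3}{2^{2j}(j!)^4}=[(2j-1)!!]^2\binom{2j}{j}$ is the positive integer already produced in the discussion of $C_{1,\ell}$. Pulling this common factor out of the sum collapses $\alpha_\ell$ into a normalized self-convolution,
\[ \alpha_\ell=\frac{1}{2^{2(\ell-1)}}\sum_{i+j=\ell-1}b_ib_j. \]
Positivity is then immediate, and since every $b_ib_j$ is an integer we already know $\alpha_\ell\in\mathbb Z[1/2]$; in particular the only prime that can survive in a denominator is $2$. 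Thus the proposition is equivalent to the single $2$-adic estimate $v_2\big(\sum_{i+j=\ell-1}b_ib_j\big)\ge 2(\ell-1)$.

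Next I would pin down the $2$-adic sizes involved. Since $(2j-1)!!$ is odd, Kummer's theorem gives $v_2(b_j)=v_2\binom{2j}{j}=s_2(j)$, the number of $1$'s in the binary expansion of $j$. The model case to keep in mind is the weightless identity $\sum_{i+j=n}\binom{2i}{i}\binom{2j}{j}=4^n$, obtained by squaring $(1-4x)^{-1/2}$: with the trivial weights $[(2j-1)!!]^2\equiv1$ the factor $2^{2n}$ comes out on the nose. The obstruction to copying this is that an individual summand $b_ib_j$ carries only the valuation $s_2(i)+s_2(j)$, whose minimum over $i+j=n$ equals $s_2(n)$ and is therefore vastly smaller than the target $2n$. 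Hence the divisibility cannot be seen termwise; it must emerge from a large $2$-adic cancellation across the whole convolution.

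To organize that cancellation I would lean on the holonomic structure already set up in \S\ref{subsec:VanhoveL}. Expanding the $m=2$ integrand as $4\pi I_0(t)[K_0(t)]^3$ identifies $\alpha_\ell=C_{2,\ell}=\frac{2^{2\ell+2}}{\pi^2}\IKM(1,3;2\ell-1)$, so the generating series $\sum_{\ell\ge1}\alpha_\ell x^{\ell-1}=B(x/4)^2$, with $B(y):=\sum_j b_jy^j$, is the symmetric square of a solution of the Picard--Fuchs operator for the $4$-Bessel sunrise. The recurrence $(j+1)b_{j+1}=2(2j+1)^3b_j$ shows that $B$ is annihilated by a third-order Fuchsian operator of the same type as Vanhove's $\widetilde L_3$; consequently $B(x/4)^2$ is killed by the associated symmetric-square operator, an operator of order at most six of Borwein--Salvy type. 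The plan is to clear denominators in this operator, extract the linear recurrence it imposes on the integers $\sum_{i+j=n}b_ib_j$, and run an induction on $n$ from the explicit initial values, thereby converting the required $2$-adic cancellation into a statement about the coefficients of a single recurrence.

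The hard part will be controlling the $2$-adic denominators introduced at each step of that recurrence: integrality by induction succeeds only if, away from the apparent singularities of the symmetric-square operator, the leading coefficient of the recurrence is a $2$-adic unit, and verifying this is the crux. Phrased combinatorially, one must show that the $2^{s_2(n)}$ carry-free pairs $(i,j)$ --- those whose binary digits partition the digits of $n$ --- together with all higher-carry pairs conspire to lift the valuation of $\sum_{i+j=n}b_ib_j$ from $s_2(n)$ all the way up to $2n$. I expect the cleanest elementary route to be a descent on the number of binary carries (via Kummer's theorem), checking that the odd cofactors $[(2i-1)!!]^2[(2j-1)!!]^2$ never disturb the exact cancellation exhibited by the baseline identity $\sum_{i+j=n}\binom{2i}{i}\binom{2j}{j}=4^n$; making this carry-by-carry bookkeeping watertight is where the genuine difficulty lies.
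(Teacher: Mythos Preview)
Your reductions are correct and well organized: the rewriting $\alpha_\ell=\frac{1}{2^{2(\ell-1)}}\sum_{i+j=\ell-1}b_ib_j$ with $b_j=[(2j-1)!!]^2\binom{2j}{j}$, the observation that $\alpha_\ell\in\mathbb Z[1/2]$, the identification $\alpha_\ell=\frac{2^{2\ell+2}}{\pi^2}\IKM(1,3;2\ell-1)$, and the valuation $v_2(b_j)=s_2(j)$ are all sound. But the proposal is not a proof: you explicitly flag the crux (``the leading coefficient of the recurrence is a $2$-adic unit \ldots\ verifying this is the crux'' and ``making this carry-by-carry bookkeeping watertight is where the genuine difficulty lies'') and then stop. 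The gap is real, not cosmetic. The recurrence coming from the differential operator annihilating $B(x/4)^2$ has polynomial coefficients in $\ell$, and there is no reason \emph{a priori} for its leading coefficient to be a $2$-adic unit; if it is not, the induction leaks powers of $2$ at every step and never recovers the target valuation $2(\ell-1)$. Your alternative combinatorial route via carry-counting is even more speculative: you would need to show that the odd weights $[(2i-1)!!]^2[(2j-1)!!]^2$ preserve the exact $2$-adic cancellation of $\sum_{i+j=n}\binom{2i}{i}\binom{2j}{j}=4^n$ modulo $2^{2n+1}$, and nothing in your outline suggests a mechanism for that.

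The paper avoids the $2$-adic problem entirely by quoting a hypergeometric identity of Rogers, which asserts that
\[
\sum_{\ell\ge1}\frac{\alpha_{\ell+1}-\ell^{2}\alpha_{\ell}}{(\ell!)^2}\,u^\ell
=3\sum_{n\ge1}[(2n-1)!!]^2\binom{3n-1}{2n}\frac{1}{(n!2^n)^2}\frac{u^{2n}}{(1-u)^{3n}}.
\]
Comparing coefficients shows that $\alpha_{\ell+1}-\ell^2\alpha_\ell$ is a $\mathbb Z$-linear combination of squares of odd double factorials, hence an integer; with $\alpha_1=1$ this gives integrality by a one-line induction. The contrast with your plan is sharp: Rogers' identity supplies a \emph{first-order} integer recurrence in $\ell$ with leading coefficient $1$, so no $p$-adic bookkeeping is needed at all. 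Your approach, if it could be completed, would be more self-contained (no external identity), but as written it leaves exactly the hard step undone.
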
\begin{proof}In \cite[Theorem 3.1]{Rogers2009}, Mathew Rogers has effectively shown that the following identity holds for $|u|$ sufficiently small:
\begin{align}\sum_{\ell=1}^\infty\frac{\alpha_{\ell+1}-\ell^{2}\alpha_{\ell}}{(\ell !)^2}u^\ell=3\sum_{n=1}^\infty[(2n-1)!!]^2{3n-1\choose2n}\frac{1}{(n!2^n)^2}\frac{u^{2n}}{(1-u)^{3n}}.
\end{align} Comparing the coefficients of $ u^n$ on both sides, we see that, for each $ n\in\mathbb Z_{>0}$, the expression $ \alpha_{n+1}-n^{2}\alpha_{n}$  equals a sum of finitely many  terms, each of which is  an integer multiple of $ (k!!)^{2}\in\mathbb Z$ for a certain odd positive integer $k$ less than $n$.  Therefore, we have $ \alpha_1=1,\alpha_{\ell+1}-\ell^{2}\alpha_{\ell}\in\mathbb Z$ for $ \ell\in\mathbb Z_{>0}$, which entails the claimed result.
\qed\end{proof}
\section{Some non-linear sum rules of Feynman diagrams\label{sec:BMdet}}
As we did in \S\ref{sec:Crandall}, we will build non-linear sum rules of Feynman diagrams without evaluating individual Bessel moments in closed form. In what follows, we describe a key step towards the proof of
Broadhurst--Mellit
determinant formulae (Theorem \ref{thm:BMdet}),  namely,  the asymptotic analysis of the Wro\'nskians $ \Omega_{2k-1}(u)$ and $ \omega_{2k}(u)$ introduced in \S\ref{subsec:VanhoveL}.

As in   \cite[\S4]{Zhou2017BMdet}, we differentiate \eqref{eq:mu_defn} with respect to $u$\ and define\begin{align}\acute
\mu^\ell_{k,j}(u):=2\sqrt{u}D^1\mu^\ell_{k,j}(u),\quad \forall j\in\mathbb Z\cap[1,2k-1].
\end{align}Through iterated applications of the Bessel differential equations $ (uD^2+D^1)I_{0}(\sqrt{u}t)=\frac{t^2}{4}I_0(\sqrt{u}t)$ and $(uD^2+D^1)K_{0}(\sqrt{u}t)=\frac{t^2}{4}K_0(\sqrt{u}t) $, we can verify \begin{align}
(2\sqrt{u})^{(k-1)(2k-1)}\Omega_{2k-1}(u)={}&\det \begin{pmatrix}\mu_{k,1}^1(u) & \cdots & \mu_{k,2k-1}^1(u) \\
\acute\mu_{k,1}^1(u) & \cdots & \acute \mu_{k,2k-1}^1(u) \\
\multicolumn{3}{c}{\cdots\cdots\cdots\cdots\cdots\cdots\cdots} \\
\mu_{k,1}^k(u) & \cdots & \mu_{k,2k-1}^k(u)  \\
\end{pmatrix},
\end{align}where the $\mu$ (resp.~$ \acute \mu$) terms occupy the odd-numbered (resp.~even-numbered)
rows. Since $ W[I_0(u),K_0(u)]=-I_0(u)K_{1}(u)-K_{0}(u)I_{1}(u)=-1/u$, we can show that  \begin{align}
\left\{ \begin{array}{l}
\mu^\ell_{k,j}(1)=\mu^\ell_{k,k+j-1}(1), \\\acute
\mu^\ell_{k,k+j-1}(1)-\acute \mu^\ell_{k,j}(1)=-\mu_{k-1,j-1}^\ell(1)
\end{array} \right.
\end{align}for all $  j\in\mathbb Z\cap[2,k]$.
Thus, we obtain, after column eliminations and row bubble sorts,{\allowdisplaybreaks\begin{align}\begin{split}&
2^{(k-1)(2k-1)}\Omega_{2k-1}(1)\\={}&\det \begin{pmatrix}\mu_{k,1}^1 (1)& \cdots&\mu^1_{k,k}(1)&0&\cdots & 0 \\
\acute\mu_{k,1}^1(1) & \cdots&\acute\mu^1_{k,k}(1)&-\mu_{k-1,1}^1(1)&\cdots  &  -\mu_{k-1,k-1}^1(1) \\
\multicolumn{6}{c}{\cdots\cdots\cdots\cdots\cdots\cdots\cdots\cdots\cdots\cdots\cdots\cdots\cdots\cdots\cdots\cdots} \\
\mu_{k,1}^k (1)& \cdots & \mu^k_{k,k}(1)&0&\cdots &0  \\
\end{pmatrix}\\={}&(-1)^{\frac{k(k-1)}{2}}\det\left( \begin{array}{c|c}\begin{array}{ccc}
 &  &  \\
 & \raisebox{-0.25\height}{\resizebox{1.75\width}{1.75\height}{$\mathbf M_k^{\mathrm T}$}} &  \\
 &  &  \\
\end{array}&\raisebox{-0.25\height}{\resizebox{1.75\width}{1.75\height}{$\mathbf O_{}$}}\\\hline\begin{array}{ccc}\vspace{-0.75em}\\\acute
\mu_{k,1}^1(1) & \cdots&\acute\mu^1_{k,k}(1) \\
\multicolumn{3}{c}{\cdots\cdots\cdots\cdots\cdots\cdots\cdots}  \\
\acute
\mu_{k,1}^{k-1}(1) & \cdots&\acute\mu^{k-1}_{k,k}(1) \\
\end{array}&\raisebox{-0.25\height}{\resizebox{1.75\width}{1.75\height}{$-\mathbf M_{k-1}^{\mathrm T}$}}\end{array} \right),\end{split}
\end{align}}which factorizes into\begin{align}
\Omega_{2k-1}(1)=(-1)^{\frac{(k-1)(k-2)}{2}}\frac{\det\mathbf M_{k-1}}{2^{(k-1)(2k-1)}}\det\mathbf M_k\label{eq:Omega1_fac}
\end{align} for each $ k\in\mathbb Z_{\geq2}$.
By a similar procedure (see \cite[Proposition 4.4]{Zhou2017BMdet} for detailed asymptotic analysis), one can show that \begin{align}
\lim_{u\to0^+}u^{k(2k-1)/2}\Omega_{2k-1}(u)=(-1)^{\frac{(k-1)(k-2)}{2}}\frac{k[\varGamma(k/2)]^{2}}{(2k+1)}\frac{(\det\mathbf N_{k-1})^2}{2^{(k-1)(2k-1)+1}}.
\end{align}Consequently, the evolution equation in \eqref{eq:Omega_evolv} admits a solution\begin{align}
\Omega_{2k-1}(u)=\frac{(-1)^{\frac{(k-1)(k-2)}{2}}k[\varGamma(k/2)]^{2}}{u^{k(2k-1)/2}(2k+1)}\frac{(\det\mathbf N_{k-1})^2}{2^{(k-1)(2k-1)+1}}\prod _{j=1}^k\left[\frac{(2j)^2}{(2j)^2-u}\right]^{k-\frac{1}{2}}\label{eq:Omega_2k_1_u_rational}
\end{align}for $ u\in(0,1]$.

Comparing \eqref{eq:Omega1_fac} and \eqref{eq:Omega_2k_1_u_rational}, we arrive at
\begin{align} \det\mathbf M_{k-1}\det\mathbf M_k={}&\frac{k [\varGamma (k/2)]^2(\det\mathbf N_{k-1})^2}{2 (2 k+1)}\prod _{j=1}^k \left[\frac{(2 j)^2}{(2 j)^2-1}\right]^{k-\frac{1}{2}},\label{eq:detM_rec}\intertext{for all $ k\in\mathbb Z_{\geq2}$.
A similar service \cite[\S4.3]{Zhou2017BMdet} on $ \omega_{2k}(u)$  then brings us}\det\mathbf N_{k-1}\det\mathbf N_k={}&\frac{2k+1}{k+1}\frac{(\det\mathbf M_{k})^2}{(k-1)!}\prod _{j=2}^{k+1} \left[\frac{(2 j-1)^2}{(2 j-1)^2-1}\right]^{k}.\label{eq:detN_rec}\end{align}The last pair of equations, together with the initial conditions  $ \det\mathbf M_1=\IKM(1,2;1)=\frac{\pi}{3\sqrt{3}}$  \cite[(23)]{BBBG2008} and $ \det\mathbf N_1=\IKM(1,3;1)=\frac{\pi^2}{2^{4}}$ \cite[(55)]{BBBG2008}, allow us to prove Theorem \ref{thm:BMdet} by induction.

As a by-product, we see from  \eqref{eq:Omega_2k_1_u_rational} that $ \Omega_{2k-1}(u)=W[\mu^1_{k,1}(u),\dots,\mu^1_{k,2k-1}(u)]$ is non-vanishing for $ u\in(0,1]$. Therefore,  the functions $ \mu^1_{k,1}(u),\dots,\mu^1_{k,2k-1}(u)$ (restricted to  the interval $ (0,1]$) form a basis for the kernel space of  $ \widetilde L_{2k-1}$. Consequently, for each $ k\in\mathbb Z_{\geq2}$, the function
$ p_{2k}(\sqrt{u})/\sqrt{u},0<u\leq 1$ (where $ p_{2k}(x)=\int_0^\infty J_0(xt)[J_0(t)]^{2k} xt\D t$ is Kluyver's probability density) is  an $ \mathbb R$-linear combination of the functions $ \mu^1_{k,1}(u),\dots,\mu^1_{k,2k-1}(u)$. Unlike  our statement  in Theorem \ref{thm:p_odd}, where the Bessel moment representation of  $ p_{2j+1}(x),j\in\mathbb Z_{\geq2}$ leaves a convergent Taylor expansion for $ 0\leq x\leq 1$, the representation of $ p_{2k}(x),0\leq x\leq 1$ through a linear combination of  Bessel moments may involve $O(x\log x) $ singularities in the $x\to0^+ $ regime, attributable to the Bessel function $K_0$. Such logarithmic singularities had been  previously studied by Borwein--Straub--Wan--Zudilin \cite{BSWZ2012}.
\section{Critical values of modular $L$-functions and multi-loop Feynman diagrams\label{sec:L-value}}As in the proof of Theorem \ref{thm:specL}\ref{itm:8Bessel} in \S\ref{subsec:mod_form}, we need to fuse Hankel transforms in the Parseval--Plancherel identity to prove \eqref{eq:IKM241}.

Fusing the following Hankel transform (cf.~\cite[(4.1.16)]{Zhou2017WEF})\begin{align}
\int_0^\infty J_0\left(\frac{3[ \eta (w)]^2 [\eta (6 w)]^4}{[\eta (3 w)]^2[\eta (2 w)]^4 }it\right)I_0(t)[K_0(t)]^2t\D t=\frac{\pi}{3\sqrt{3}}\frac{\eta (3 w)[ \eta (2 w)]^6}{[\eta (w)]^3 [\eta (6 w)]^2}
\end{align}\big(where $ w=\frac{1}{2}+iy,y>0$ corresponds to $ 0<\frac{3[ \eta (w)]^2 [\eta (6 w)]^4}{[\eta (3 w)]^2[\eta (2 w)]^4 }i<\infty$\big) with itself, we obtain (cf.~\cite[Proposition 4.2.1]{Zhou2017WEF})\begin{align}
\IKM(2,4;1)=\frac{\pi^3i}{3}\int_{\frac12}^{\frac12+i\infty}f_{4,6}(w)\D w.
\end{align} This is not quite the statement in \eqref{eq:IKM241} yet, as the integration path still sits on the ``wrong'' portion of $\partial \mathfrak D_{6,6} $ (Fig.~\ref{fig:fun_domains}\textit{d}). To compensate for this, we need another Hankel fusion, together with some modular transforms on the Chan--Zudilin group $ \Gamma_0(6)_{+2}$, to construct an identity \cite[Proposition 4.2.2]{Zhou2017WEF}: \begin{align}
\JYM(6,0;1)={}&\frac{12}{\pi i}\int_{0}^{i\infty}f_{4,6}(w)\D  w-\frac{6}{\pi i}\int_{\frac12}^{\frac{1}{2}+i\infty}f_{4,6}(w)\D w.\end{align}Now that Wick rotation brings us $ \IKM(2,4;1)=\frac{\pi^{4}}{30}\JYM(6,0;1)$ \cite[(4.1.1)]{Zhou2017WEF}, we can deduce  \eqref{eq:IKM241} from the last two displayed equations.

It takes slightly more effort to verify \eqref{eq:IKM151IKM331}. Towards this end, we need a ``Hilbert cancelation formula'' \cite[Lemma 4.2.4]{Zhou2017WEF}\begin{align}
\int_0^\infty \left[ \int_0^\infty J_0(xt) F(t)t\D t \right]\left[\int_0^\infty Y_0(x\tau) F(\tau)\tau\D \tau\right]x\D x
=0
\end{align}for functions $F(t),t>0$ satisfying certain growth bounds, along with  modular parametrizations of some generalized Hankel transforms, such as (cf.~\cite[(4.1.31)]{Zhou2017WEF})\begin{align}\begin{split}&
\int_0^\infty J_0\left(\frac{3[ \eta (w)]^2 [\eta (6 w)]^4}{[\eta (3 w)]^2[\eta (2 w)]^4 }it\right)[K_0(t)]^3t\D t\\&-\frac{3\pi}{2}\int_0^\infty Y_0\left(\frac{3[ \eta (w)]^2 [\eta (6 w)]^4}{[\eta (3 w)]^2[\eta (2 w)]^4 }it\right)I_0(t)[K_0(t)]^2t\D t\\={}&\frac{\pi^{2}(2w-1)}{2\sqrt{3}i}\frac{\eta (3 w)[ \eta (2 w)]^6}{[\eta (w)]^3 [\eta (6 w)]^2}\end{split}
\end{align} for  $ w=\frac{1}{2}+iy,y>0$. We refer our readers to \cite[Proposition 4.1.3 and Theorem 4.2.5]{Zhou2017WEF} for detailed computations that lead to  \eqref{eq:IKM151IKM331}. \section{Outlook\label{sec:open}}\subsection{Broadhurst's $p$-adic heuristics}
In \S\ref{subsec:arith_BM}, the modular forms $ f_{3,15}$, $ f_{4,6}$ and $ f_{6,6}$ were not picked randomly, but were discovered by Broadhurst via some deep insights into $p$-adic analysis and \'etale cohomology \cite{SGA4.5,SGA5}. In short, Broadhurst's computations of Bessel moments over finite fields  led him to local factors in the Hasse--Weil zeta functions, which piece together into the modular $L$-functions, namely, $ L(f_{3,15},s)$ for the 5-Bessel problem, $ L(f_{4,6},s)$ for the 6-Bessel problem, and $ L(f_{6,6},s)$ for the 8-Bessel problem.

On the arithmetic side,
 Broadhurst investigated Kloosterman moments (``Bessel moments over finite fields''), with extensive numerical experiments  \cite[\S\S2--6]{Broadhurst2016}.
A Bessel function over a finite field \cite{Robba1986pBessel}, with respect to the variable $a\in\mathbb F_{q^{\vphantom k}}=\mathbb F_{p^k}$, is defined by the following Kloosterman sum: \begin{align}
\Kl_2(\mathbb F_{p^k},a):={}&\sum_{x_1,x_2\in\mathbb F^\times_q,x_1x_2= a}e^{\frac{2\pi  i}{p}\Tr_{\mathbb F_{q}/\mathbb F_p}(x_1+x_2)}=\sum_{x\in\mathbb F^\times_q}e^{\frac{2\pi  i}{p}\Tr_{\mathbb F_{q}/\mathbb F_p}\left( x+\frac{a}{x} \right)}
\end{align}where the Frobenius trace $ \Tr_{\mathbb F_{q}/\mathbb F_p}$ acts on  an element $ z\in\mathbb F_q$ as $\Tr_{\mathbb F_{q}/\mathbb F_p}(z):=\sum_{j=0}^{k-1}z^{p^j} $. Writing $ \Kl_2(\mathbb F_{p^k},a)=-\alpha_a-\beta_a$ where $ \alpha_a\beta_a=q$, and introducing the $n$-th symmetric power  $ \Kl_2^n:=\Sym^{n}(\Kl_2)$ as $ \Kl_2^n(\mathbb F_{p^k},a):=\sum_{j=0}^n\alpha_a^j\beta_a^{n-j}$, we may further define Bessel moments over a finite field as the following Kloosterman moments:\begin{align}
S_{n}(q):=\sum_{a\in\mathbb F^\times_q}\Kl_2^n(\mathbb F_{p^k},a)=\sum_{a\in\mathbb F^\times_q}\sum_{j=0}^n\alpha_a^j\beta_a^{n-j}.
\end{align}
With $c_n(q)=-\frac{1+S_n(q)}{q^2} $ for a prime power $q=p^k$, one defines the Hasse--Weil local factor by a formula\begin{align}
 Z_n(p,T):=\exp\left( -\sum_{k=0}^\infty\frac{c_n(p^k)}{k}T^k \right).
\end{align}Following the notations of  Fu--Wan \cite{FuWan2008MathAnn}, we set $ L_p(\mathbb P^1_{\mathbb F_p}\smallsetminus\{0,\infty\},\Sym^n(\Kl_2),s)=1/Z_n(p,p^{-s})$, and define the Hasse--Weil zeta function\begin{align}
\zeta _{n,1}(s):=\prod_pL_p(\mathbb P^1_{\mathbb F_p}\smallsetminus\{0,\infty\},\Sym^n(\Kl_2),s)=\prod_p\frac{1}{Z_{n}(p,p^{-s})},
\end{align}where the product runs over all the primes. It is known that $ \zeta_{5,1}(s)=L(f_{3,15},s)$  \cite{PetersTopVlugt1992}  and $ \zeta_{6,1}(s)=L(f_{4,6},s)$ \cite{Hulek2001}. The structure of $ \zeta_{7,1}(s)$, which involves a Hecke eigenform of weight $3$ and level $525$,  had been conjectured by Evans \cite[Conjecture 1.1]{Evans2010b}, before being completely verified by Yun \cite[\S4.7.7]{Yun2015}.
The story for the 8-Bessel problem is much  more convoluted (see \cite[Theorem 4.6.1 and Appendix B]{Yun2015} as well as \cite[\S7.6]{Broadhurst2016}).

On the geometric side, Broadhurst's $ L$-functions $ L(f_{3,15},s)$ and $ L(f_{4,6},s)$ are closely related to the \'etale cohomologies of certain Calabi--Yau manifolds. Concretely speaking, one may regard the 4-loop sunrise [quadruple integral in \eqref{eq:4-loop_sunrise}] as a motivic integral over the Barth--Nieto quintic variety \cite{BarthNieto1994,GritsenkoHulek1999,Hulek2001}, which is defined through a complete intersection\begin{align}
N:=\left\{[u_0:u_{1}:u_{2}:u_{3}:u_{4}:u_{5}]\in\mathbb P^5\left|\sum_{k=0}^5 u_k=\sum^5_{k=0}\frac{1}{u_k}=0\right.\right\}.\label{eq:BN}
\end{align}The projective variety $N$ has a smooth Calabi--Yau model $Y$. Its third \'etale cohomology group $H^3_{\text{\'et}}(Y) $   is related to  2-dimensional representations of $\Gal(\overline{\mathbb Q}/\mathbb Q) $  \cite[\S3]{Hulek2001}, so that for each prime $p\geq5$, one has  $L_p(H^3_{\text{\'et}}(Y),s)=[1-a_p(Y)p^{-s}+p^{3-2s}]^{-1} $ for \begin{align} a_p(Y)=\tr(\Frob_p^*,H^3_{\text{\'et}}(Y))=1+50p+50p^{2}+p^3-\#Y(\mathbb F_p),\end{align}where $\#Y(\mathbb F_p) $ counts the number of points within   $ Y$ over the finite field $ \mathbb F_p$.
 The modular $L$-function $ L(f_{4,6},s)$ coincides with  $ L(H^3_{\text{\'et}}(Y),s)=\prod_p L_p(H^3_{\text{\'et}}(Y),s)$ for all  the local factors $L_p(\cdot,s) $ corresponding to primes $ p\geq5$ and $\RE s$ sufficiently large.
A similar $p$-adic reinterpretation for  $ L(f_{3,15},s)$ also exists. Let $A_n $ be the Fourier coefficient in  $ f_{3,15}(z)=\sum_{n=1}^\infty A_ne^{2\pi in z}$, and $ \left(\frac p3\right)$ be the Legendre symbol for a prime $p$ other than $3$ and $5$, then \cite[Theorem 5.3]{PetersTopVlugt1992}\begin{align}
1+p^2+p\left( 16+4\left(\frac p3\right) \right)+A_p
\end{align}counts the number of $ \mathbb F_p$-rational points of a $K3$ surface
that is the minimal resolution of singularities of \begin{align}
\left\{[u_0:u_{1}:u_{2}:u_{3}:u_{4}]\in\mathbb P^4\left|\sum_{k=0}^4 u_k=\sum^4_{k=0}\frac{1}{u_k}=0\right.\right\}.
\end{align}

Behind the aforementioned results on $p$-adic Bessel moments is a long and heroic tradition of algebraic geometry. Back in the 1970s, building upon the theories of Dwork \cite{Dwork1960} and Grothendieck \cite{Grothendieck1968,SGA5}, Deligne  interpreted  Hasse--Weil $ L$-functions as Fredholm determinants of Frobenius maps \cite[(1.5.4)]{Deligne1Weil}.
This tradition has been continued by  Robba \cite{Robba1986pBessel}, Fu--Wan \cite{FuWan2005,FuWan2008,FuWan2008MathAnn} and Yun \cite{Yun2015}, in their studies of $p$-adic Bessel functions and Kloosterman sheaves.

While Broadhurst's $p$-adic heuristics give strong hints that  $ L(f_{3,15},s)$, $ L(f_{4,6},s)$ and $ L(f_{6,6},s)$ are appropriate mathematical models for 5-, 6- and 8-Bessel problems, our proofs of Theorems \ref{thm:Bologna} and \ref{thm:specL} described in this survey do not touch upon the  $p$-adic structure. It is perhaps worthwhile to rework these proofs from the Hasse--Weil perspective, using local-global correspondence. We call for this effort because there are still many conjectures of Broadhurst  (see \S\ref{subsec:open_prob} for a partial list) that go beyond the reach of this survey, but might appear tractable to specialists in $p$-adic analysis and \'etale cohomology.
\subsection{Open questions\label{subsec:open_prob}}There are three outstanding problems involving  5-, 6- and 8-Bessel factors, originally formulated by Broadhurst--Mellit \cite[(4.3), (5.8), (7.15)]{BroadhurstMellit2016} and Broadhurst \cite[(101), (114),  (160)]{Broadhurst2016}. \begin{conjecture}[Broadhurst--Mellit]\label{conj:568}The following determinant formulae hold:
\begin{align}
\det
\begin{pmatrix}\IKM(0,5;1) & \IKM(0,5;3) \\
\IKM(2,3;1) & \IKM(2,3;3) \\
\end{pmatrix}\overset{?}{=}{}&\frac{45}{8\pi^{2}}L(f_{3,15},4),
\\\det
\begin{pmatrix}\IKM(0,6;1) & \IKM(0,6;3) \\
\IKM(2,4;1) & \IKM(2,4;3) \\
\end{pmatrix}\overset{?}{=}{}&\frac{27}{4\pi^2}L(f_{4,6},5),\\\det
\begin{pmatrix}\IKM(0,8;1) & \IKM(0,8;3)-2\IKM(0,8;5) \\
\IKM(2,6;1) & \IKM(2,6;3)-2\IKM(2,6;5) \\
\end{pmatrix}\overset{?}{=}{}&\frac{6075}{128\pi^2}L(f_{6,6},7).\end{align}\end{conjecture}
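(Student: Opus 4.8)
The plan is to start from the observation that these three determinants differ in character from the Broadhurst--Mellit determinants of Theorem~\ref{thm:BMdet}: whereas the latter evaluate to products of $\varGamma$-values, each determinant in Conjecture~\ref{conj:568} lands on the \emph{first non-critical} value $L(f_{k,N},k+1)$ (here $L(f_{6,6},7)$ for the weight-$6$ form), which by the reflection formula~\eqref{eq:Lambda_refl} is tied to the value at $s=-1$, deep in the region of analytic continuation. Such edge values are exactly the ones carried by Beilinson-type regulators, and a $2\times2$ minor of a period matrix is precisely a regulator. I would therefore read each determinant as a period pairing and evaluate it by marrying the modular-parametrization machinery of \S\ref{subsec:mod_form} to the functional equation. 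The three formulae share one architecture, so I would develop it uniformly but anchor the exposition on the final, $8$-Bessel identity.

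First I would extend the modular parametrizations~\eqref{eq:IKKK_Hankel}--\eqref{eq:IIKK_Hankel}, which express the Hankel transforms of the four-Bessel integrands $I_0[K_0]^3$ and $[I_0]^2[K_0]^2$ through the weight-$2$ form $Z_{6,3}$, to their higher-moment analogues carrying extra powers of the Hankel variable. Fusing two such four-Bessel transforms through the Parseval--Plancherel identity --- the very mechanism that yielded Theorem~\ref{thm:specL}\ref{itm:8Bessel}, where $[Z_{6,3}]^{2}\,\D X_{6,3}(z)/\D z=2\pi i f_{6,6}(z)$ converts the squared weight-$2$ form into $f_{6,6}$ --- I would write each of the four matrix entries $\IKM(0,8;2j-1)$ and $\IKM(2,6;2j-1)$ as a boundary integral of $f_{6,6}(z)z^{m}$ over $\partial\mathfrak D_{6,3}$ (Fig.~\ref{fig:fun_domains}\textit{e}), with the two rows tracing back to the ``pure-$K$'' versus ``mixed'' four-Bessel transforms (i.e.\ to $Z_{6,3}$ versus $zZ_{6,3}$ near the cusp) and the moment order $2j-1$ fixing the power $m$.

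The structural key is the second column, $\IKM(\cdot;3)-2\IKM(\cdot;5)$. Although the individual moments converge, the contour integrals representing them cease to converge once the moment order exceeds the Bessel count, and I expect this specific combination to be the \emph{regularized} integral supplied by the Laurent-subtraction device of Lemma~\ref{lm:vanHoeij}, with the constant $-2$ read off from the asymptotic expansion~\eqref{eq:H0_asympt}. Its purpose is to raise the accessible power to $m=k=6$, precisely the degree needed to reach $L(f_{6,6},7)$ under reflection. Once both columns are so expressed, the determinant becomes antisymmetric in two period vectors, and an Eichler--Shimura / Legendre-type bilinear relation among the boundary integrals of $f_{6,6}(z)z^{m}$ should collapse it to a single $L$-value times an explicit rational multiple of $\pi^{-2}$.

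The hard part will be twofold. First, the periods in play lie outside the classical Eichler--Shimura critical range, so the non-critical evaluation must be justified: either by continuing the boundary integrals past the strip through~\eqref{eq:Lambda_refl} while tracking the contribution of each side of $\partial\mathfrak D_{6,3}$, or by recasting the determinant as a Rankin--Selberg / Petersson-norm quantity whose regulator is known to return $L(f_{6,6},\cdot)$ at the edge. Second, the divergence bookkeeping for the order-$3$ and order-$5$ moments, together with the normalization that fixes the subtraction constant $-2$ and ultimately the rational factor $6075/128$, is where the genuine labour lies; it substantially extends both the single-moment analysis of \S\ref{subsec:mod_form} and the Laurent-subtraction of Lemma~\ref{lm:vanHoeij}. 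The analogous constants $45/8$ and $27/4$ for the $5$- and $6$-Bessel cases, where the simpler columns $\IKM(\cdot;1),\IKM(\cdot;3)$ already suffice, should drop out of the same framework once it is in place.
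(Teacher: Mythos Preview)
This statement is presented in the paper as a \emph{conjecture}, not a theorem: it sits in \S\ref{subsec:open_prob} among the open problems, and the paper supplies no proof. Immediately after stating Conjecture~\ref{conj:568} the paper remarks that the special values $L(f_{k,N},k+1)$ involved ``all lie outside the critical strip $0<\RE s<k$, so they do not yield to the methods given in \S\ref{subsec:mod_form} or \S\ref{sec:L-value}.'' There is therefore nothing to compare your proposal against; the honest answer is that no proof is known.

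Your proposal is not a proof but a programme, and its load-bearing step is precisely the one the paper flags as unavailable. You plan to write the matrix entries as boundary integrals of $f_{6,6}(z)z^m$ over $\partial\mathfrak D_{6,3}$ and then invoke ``an Eichler--Shimura / Legendre-type bilinear relation'' to collapse the $2\times2$ determinant to $L(f_{6,6},7)$. But the Eichler--Shimura period relations govern the critical values $L(f_{6,6},s)$ for $s\in\{1,\dots,5\}$; the value at $s=7$ (equivalently $s=-1$ under \eqref{eq:Lambda_refl}) is a Beilinson regulator, not an Eichler period, and no bilinear identity among the boundary integrals of $f_{6,6}(z)z^m$ with $m\le5$ will produce it. Your own text concedes this: you call the non-critical evaluation ``the hard part'' and offer two alternatives---analytic continuation of the boundary integrals, or a Rankin--Selberg/Petersson recasting---but neither is carried out, and neither is routine. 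The Laurent-subtraction device of Lemma~\ref{lm:vanHoeij} regularizes divergent \emph{Bessel} integrals; it does not by itself extend the modular period machinery past the critical strip. Until a genuine mechanism for reaching $L(f_{k,N},k+1)$ is supplied, the proposal remains a restatement of why the conjecture is hard rather than a proof of it.
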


Here, one might wish to compare the last conjectural determinant evaluation to the following proven result:\begin{align}\begin{split}
\frac{5\pi^8}{2^{19}3}={}&\det\mathbf  N_3=\det\begin{pmatrix}\IKM (1,7;1)& \IKM (1,7;3) & \IKM (1,7;5) \\
\IKM (2,6;1)& \IKM (2,6;3) & \IKM (2,6;5) \\
\IKM (3,5;1)& \IKM (3,5;3) & \IKM (3,5;5) \\
\end{pmatrix}\\={}&\frac{\pi^{2}}{2^{8}}\det\begin{pmatrix}\IKM (1,7;1)& \IKM (1,7;3)-2\IKM (1,7;5)  \\
\IKM (2,6;1)& \IKM (2,6;3)-2\IKM (2,6;5) \\
\end{pmatrix}.\end{split}
\end{align}To arrive at the  last step, we have used the Crandall number relations [Theorem \ref{thm:B3G_Crandall}\ref{itm:Crandall}]  $ \IKM (3,5;1)-\frac{\IKM (1,7;1)}{\pi^{2}}=0$, $ \IKM (3,5;3)-\frac{\IKM (1,7;3)}{\pi^{2}}=\frac{\pi^{2}}{2^{7}}$, $ \IKM (3,5;5)-\frac{\IKM (1,7;5)}{\pi^{2}}=\frac{\pi^{2}}{2^{8}}$, along with row and column eliminations.

The special $ L$-values $L(f_{k,N},s) $ in Conjecture \ref{conj:568} all lie outside the critical strip $ 0<\RE s< k$, so they do not yield to the methods given in \S\ref{subsec:mod_form} or \S\ref{sec:L-value}.

Working with Anton Mellit at Mainz, David Broadhurst has discovered a numerical connection (see \cite[(6.8)]{BroadhurstMellit2016} or \cite[(129)]{Broadhurst2016}) between $ \zeta_{7,1}(s)=\prod_p \frac{1}{Z_7(p,p^{-s})}$ and the 7-Bessel problem, which still awaits a proof.

\begin{conjecture}[Broadhurst--Mellit]We have \begin{align}\IKM(2,5;1)\overset{?}{=}\frac{5\pi^2}{24}\zeta_{7,1}(2).\end{align}\end{conjecture}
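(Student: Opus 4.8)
The plan is to treat the $7$-Bessel moment $\IKM(2,5;1)$ in parallel with the odd-order problem already settled for $f_{3,15}$ in Theorem~\ref{thm:Bologna}, while importing the arithmetic input supplied by Yun's determination of the Kloosterman sheaf cohomology. By Evans' conjecture \cite[Conjecture 1.1]{Evans2010b}, proven by Yun \cite[\S4.7.7]{Yun2015}, the Hasse--Weil zeta function $\zeta_{7,1}(s)$ is, after stripping off elementary Tate factors, the $L$-function of a Hecke eigenform $g$ of weight $3$ and level $525=3\cdot5^{2}\cdot7$, so that $\zeta_{7,1}(2)=L(g,2)$ is a critical value lying in the strip $0<s<3$. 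The first step is therefore to pin down this identification precisely (normalisations and bad Euler factors at $p\in\{3,5,7\}$), so that the target $\frac{5\pi^{2}}{24}\zeta_{7,1}(2)$ is rewritten as an explicit rational multiple of $L(g,2)$.

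First I would try to carry the Hankel-fusion machinery of \S\ref{subsec:mod_form} and \S\ref{sec:L-value} over to the factorisation $[I_{0}]^{2}[K_{0}]^{5}=(I_{0}[K_{0}]^{2})(I_{0}[K_{0}]^{3})$, feeding the weight-$1$ parametrisation of the $3$-Bessel transform and the weight-$2$ parametrisation \eqref{eq:IKKK_Hankel} of the $4$-Bessel transform into the Parseval--Plancherel identity. The obstruction surfaces immediately upon counting modular weights: fusing a weight-$1$ with a weight-$2$ transform and absorbing the weight-$2$ Jacobian $\D X_{6,3}/\D z$ yields a weight-$5$ integrand on a level-$6$ group, whereas the arithmetic demands weight $3$ and level $525$. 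The prime $7$ and the factor $5^{2}$ that enter the level through $\Sym^{7}(\Kl_{2})$ are invisible to the $\Gamma_0(6)_{+3}$ geometry that powered the $6$- and $8$-Bessel proofs. Thus, unlike the even cases, the naive fusion cannot close the circle, and one would instead need a genuinely new modular parametrisation adapted to the order-$5$ Vanhove operator $\widetilde L_{5}$ that annihilates $\int_{0}^{\infty}I_{0}(\sqrt{u}\,t)[K_{0}(t)]^{6}t\,\D t$.

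Accordingly, the main line of attack I would pursue is the Hasse--Weil / motivic route flagged in \S\ref{subsec:open_prob}. The idea is to realise $\IKM(2,5;1)$ as a Betti period of the motive $M=H^{1}(\mathbb{G}_{m},\Sym^{7}(\Kl_{2}))$ whose $L$-function is $\zeta_{7,1}(s)$: the two copies of $I_{0}$ and the five copies of $K_{0}$ should assemble, after Wick rotation of the kind used for Theorem~\ref{thm:Bologna}, into a pairing of a de Rham class (a wedge of invariant differentials for the Bessel connection) against the natural real Betti cycle over $\mathbb{G}_{m}(\mathbb R)$. Because $g$ has weight $3$, its critical values are controlled by Shimura's theory of periods of modular symbols, so $L(g,2)$ equals, up to an explicit algebraic factor, one of the two Shimura periods of $g$; the $\pi^{2}$ in the conjectured constant should then record the passage between those periods via the functional equation \eqref{eq:Lambda_refl}, exactly as $\pi^{2}$ appears in $\IKM(1,4;1)=\frac{\pi^{2}}{5}L(f_{3,15},1)$ but not in $\IKM(2,3;1)=\frac{3}{4}L(f_{3,15},2)$.

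The hard part will be the Betti--de Rham period comparison together with the exact determination of the rational constant $\tfrac{5}{24}$. Concretely, one must match the period lattice in which $\IKM(2,5;1)$ is naturally measured against the modular-symbol lattice governing the relevant Shimura period of $g$, tracking every normalisation through Yun's cohomological identification; this is precisely an instance of Deligne's period conjecture for $M$ at $s=2$, which is not available a priori for a non-CM weight-$3$ motive. I expect the realistic route to be establishing the comparison up to an undetermined rational and then fixing that rational by a single high-precision numerical check against $\frac{5\pi^{2}}{24}\zeta_{7,1}(2)$, mirroring the way the conjecture was discovered. The reason the problem has resisted the purely analytic methods of this survey is the absence of any explicit uniformisation of the $\widetilde L_{5}$ Picard--Fuchs equation by an arithmetic group of level $525$, so the motivic comparison, rather than an eta-quotient parametrisation, appears to be the only currently plausible path.
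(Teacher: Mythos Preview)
The paper does not prove this statement: it is presented in \S\ref{subsec:open_prob} as an \emph{open conjecture} of Broadhurst--Mellit, with no proof offered. So there is no ``paper's own proof'' against which to compare your proposal.

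Your write-up is not a proof either, and you are candid about this. You correctly diagnose why the Hankel-fusion machinery of \S\ref{subsec:mod_form} and \S\ref{sec:L-value} cannot be transported directly (the weight/level mismatch between $\Gamma_0(6)_{+3}$ geometry and the weight-$3$, level-$525$ eigenform arising from $\Sym^7(\Kl_2)$), and you sketch a motivic/period-comparison strategy. But the decisive step you identify --- matching the Betti period $\IKM(2,5;1)$ to the Shimura period of $g$ at $s=2$ up to a specific rational --- is, as you yourself note, an instance of Deligne's period conjecture for a non-CM weight-$3$ motive, which is not known. Proposing to fix the undetermined rational by a numerical check is not a proof; it simply recovers the experimental evidence on which the conjecture was based. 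In short, your proposal is a reasonable research plan, consonant with the paper's own remarks on the $p$-adic/motivic route, but it leaves the conjecture exactly as open as the paper does.
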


In a recent collaboration with David Roberts \cite{Broadhurst2017Paris,Broadhurst2017CIRM,Broadhurst2017Higgs,Broadhurst2017ESIa,Broadhurst2017DESY}, David Broadhurst has discovered a lot more empirical formulae relating determinants of Bessel moments to special values of Hasse--Weil $L$-functions, which are outside the scope of the current exposition.   Nevertheless, we  believe that  one day such determinant formulae will reveal deep $p$-adic structures of Bessel moments, as foreshadowed by pioneering works on Hasse--Weil $ L$-functions and Fredholm determinants for Frobenius maps \cite{Deligne1Weil,Robba1986pBessel,PetersTopVlugt1992,Hulek2001,FuWan2005,FuWan2008,FuWan2008MathAnn,Yun2015}.  \begin{acknowledgement}
This research was supported in part  by the Applied Mathematics Program within the Department of Energy
(DOE) Office of Advanced Scientific Computing Research (ASCR) as part of the Collaboratory on
Mathematics for Mesoscopic Modeling of Materials (CM4).

My work on Bessel moments and modular forms began in 2012, in the form of preliminary research notes at Princeton. I thank Prof.~Weinan E\ (Princeton University and Peking University) for running seminars on mathematical problems in quantum field theory at Princeton, and for arranging my stays at both Princeton and Beijing.

I am grateful to Dr.\ David Broadhurst for fruitful communications on recent progress in the arithmetic studies of Feynman diagrams  \cite{Broadhurst2017Paris,Broadhurst2017CIRM,Broadhurst2017Higgs,Broadhurst2017ESIa,Broadhurst2017DESY}. It is a pleasure to dedicate this survey to him, in honor of his 70th birthday.
\end{acknowledgement}

%\bibliography{ExpoDESY}
%\bibliographystyle{plain}

\clearpage
\setcounter{page}{1}
\section*{Correction to: Some algebraic and arithmetic properties of
Feynman diagrams}

Form the  Chan--Zudilin groups  $ \Gamma_0(6)_{+6}$,  $ \Gamma_0(6)_{+3}$, and  $ \Gamma_0(6)_{+2}$ by adjoining the Chan--Zudilin involutions $ \hat W_6z= -1/(6z)$, $ \hat W_3z= (3z-2)/(6z-3)$, and $ \hat W_2z= (2z-1)/(6z-2)$ to $ \Gamma_0(6)$, the Hecke congruence subgroup of level 6.

Panels \textit{d} and \textit{f}  in \cite[Fig.~2]{Zhou2018ExpoDESY}, which originally portrayed $\mathfrak D_{6,2} $ and $\mathfrak D_{6,6} $ [the fundamental domains of $ \Gamma_0(6)_{+2}$  and  $ \Gamma_0(6)_{+6}$] in the following form:

\vspace{-1.5em}
\begin{figure*}[h!]
\begin{center}

\begin{tikzpicture}[scale=2]

\foreach \y in {0}\draw[->]({\y},-.25)--({\y},1.625)node[above]{$\IM z$};
\foreach \y in {0} \draw[color=gray!20!white,fill,ultra thin] ({\y+1/2},1.5)--({\y+1/2},0)--plot[samples=360,domain=0:180] ({\y+5/12+cos(\x)/12},{sin(\x)/12})--plot[samples=360,domain=0:180] ({\y+1/6+cos(\x)/6},{sin(\x)/6})--({\y},{0})--({\y},1.5);

\foreach \y in {0}
\draw[->]({\y-0.1},0)--({\y+0.6},0)node[right]{$\RE z$};

\draw[->,color=red,line width=1pt] (.5,0)--(.5,.8);
\draw[color=red,line width=1pt] (.5,0)--(.5,1.5);
\draw[->,color=blue,line width=.5pt] plot[samples=20,domain=100:90] ({cos(\x)/12+5/12},{sin(\x)/12});
\draw[->,color=blue,line width=.5pt] plot[samples=20,domain=80:70] ({cos(\x)/12+5/12},{sin(\x)/12});
\draw[color=blue,line width=1pt] plot[samples=360,domain=0:180] ({cos(\x)/12+5/12},{sin(\x)/12});
\draw[color=blue,line width=1pt] (0,0)--(0,1.5);
\draw[->,color=red,line width=.5pt] plot[samples=20,domain=100:90] ({cos(\x)/6+1/6},{sin(\x)/6});
\draw[color=red,line width=1pt] plot[samples=360,domain=0:180] ({cos(\x)/6+1/6},{sin(\x)/6});
\draw[>-,color=blue,line width=1pt] (0,0.8)--(0,0);
\draw[>-,color=blue,line width=1pt] (0,0.75)--(0,0);

\draw({1/2},0)node[below]{$^{\frac12} $};
\draw({1/3},0)node[below]{$^{\frac13} $};

\node[color=black] at (0.375,1.375) {$^{ \mathfrak D_{6,2}}$};
\draw(0.25,-0.25) node[below]{\textbf{(d$ ^\times$)}};

\end{tikzpicture}\hspace{.12em}\begin{tikzpicture}[scale=2]

\foreach \y in {0}\draw[->]({\y},-.25)--({\y},1.625)node[above]{$\IM z$};
\foreach \y in {0} \draw[color=gray!20!white,fill,ultra thin] ({\y+1/2},1.5)--({\y+1/2},0)-- plot[samples=200,domain=0:101.5] ({5/12+cos(\x)/12},{sin(\x)/12})--plot[samples=360,domain=11.5:168.5] ({cos(\x)/sqrt(6)},{sin(\x)/sqrt(6)})-- plot[samples=200,domain=78.5:180] ({-5/12+cos(\x)/12},{sin(\x)/12})--({\y-1/2},0)--({\y-1/2},1.5);

\foreach \y in {0}\draw[->]({\y-0.6},0)--({\y+0.6},0)node[right]{$\RE z$};

\draw[->,color=green,line width=1pt] (.5,0)--(.5,.8);
\draw[->,color=green,line width=1pt] (-.5,0)--(-.5,.8);
\draw[->,color=green,line width=1pt] (.5,0)--(.5,.75);
\draw[->,color=green,line width=1pt] (-.5,0)--(-.5,.75);
\draw[->,color=green,line width=1pt] (.5,0)--(.5,.7);
\draw[->,color=green,line width=1pt] (-.5,0)--(-.5,.7);
\draw[color=green,line width=1pt] (.5,0)--(.5,1.5);
\draw[color=green,line width=1pt] (-.5,0)--(-.5,1.5);

\draw[dotted,thick] plot[samples=180,domain=90:180] ({cos(\x)/sqrt(6)},{sin(\x)/sqrt(6)});
\draw[dotted,thick] plot[samples=180,domain=0:90] ({cos(\x)/sqrt(6)},{sin(\x)/sqrt(6)});
\draw[dotted,thick] plot[samples=360,domain=0:180] ({5/12+cos(\x)/12},{sin(\x)/12});
\draw[dotted,thick] plot[samples=360,domain=0:180] ({-5/12+cos(\x)/12},{sin(\x)/12});
\draw[->,color=blue,line width=.5pt] plot[samples=20,domain=130:140] ({-5/12+cos(\x)/12},{sin(\x)/12});
\draw[->,color=blue,line width=.5pt] plot[samples=20,domain=110:120] ({-5/12+cos(\x)/12},{sin(\x)/12});
\draw[->,color=blue,line width=.1pt] plot[samples=20,domain=70:60] ({5/12+cos(\x)/12},{sin(\x)/12});
\draw[->,color=blue,line width=.1pt] plot[samples=20,domain=50:40] ({5/12+cos(\x)/12},{sin(\x)/12});
\draw[color=blue,line width=1pt] plot[samples=200,domain=0:101.5] ({5/12+cos(\x)/12},{sin(\x)/12});
\draw[color=blue,line width=1pt] plot[samples=200,domain=78.5:180] ({-5/12+cos(\x)/12},{sin(\x)/12});
\draw[->,color=red,line width=.5pt] plot[samples=20,domain=120:130] ({cos(\x)/sqrt(6)},{sin(\x)/sqrt(6)});
\draw[->,color=red,line width=.5pt] plot[samples=20,domain=60:50] ({cos(\x)/sqrt(6)},{sin(\x)/sqrt(6)});
\draw[color=red,line width=1pt] plot[samples=360,domain=11.5:168.5] ({cos(\x)/sqrt(6)},{sin(\x)/sqrt(6)});

\draw({1/2},0)node[below]{$^{\frac12} $};
\draw({1/3},0)node[below]{$^{\frac13} $};

\draw[fill=black](0,{1/sqrt(6)})circle[radius=0.015]node at (0,{1/sqrt(6)+.125}) {$_{\frac{i}{\sqrt6}}$};
\node[color=black] at (0.375,1.375) {$^{ \mathfrak D_{6,6}}$};
\draw(0,-0.25) node[below]{\textbf{(f$ ^\circ$)}};\end{tikzpicture}
\end{center}\end{figure*}
\vspace{-2em}
\noindent need to be replaced by

\vspace{-1.5em}
\begin{figure*}[h!]\begin{center}

\begin{tikzpicture}[scale=2]

\foreach \y in {0}\draw[->]({\y},-.25)--({\y},1.625)node[above]{$\IM z$};
\foreach \y in {0} \draw[color=gray!20!white,fill,ultra thin] ({\y+1/2},1.5)--({\y+1/2},0)--plot[samples=360,domain=0:180] ({\y+5/12+cos(\x)/12},{sin(\x)/12})--plot[samples=360,domain=0:180] ({\y+1/6+cos(\x)/6},{sin(\x)/6})--({\y},{0})--({\y},1.5);

\foreach \y in {0}
\draw[->]({\y-0.15},0)--({\y+0.6},0)node[right]{$\RE z$};

\draw[->,color=green,line width=1pt] (.5,0)--(.5,.8);
\draw[->,color=green,line width=1pt] (.5,0)--(.5,.75);
%\draw[->,color=green,line width=1pt] (.5,0)--(.5,.7);
\draw[color=green,line width=1pt] (.5,0)--(.5,1.5);
\draw[->,color=blue,line width=.1pt] plot[samples=20,domain=60:50] ({5/12+cos(\x)/12},{sin(\x)/12});
\draw[->,color=blue,line width=.1pt] plot[samples=20,domain=140:150] ({5/12+cos(\x)/12},{sin(\x)/12});
\draw[color=blue,line width=1pt] plot[samples=360,domain=0:180] ({cos(\x)/12+5/12},{sin(\x)/12});
\draw[color=brown,line width=1pt] (0,0)--(0,1.5);
%\draw[->,color=green,line width=.5pt] plot[samples=20,domain=100:110] ({cos(\x)/6+1/6},{sin(\x)/6});
\draw[->,color=green,line width=.5pt] plot[samples=20,domain=80:90] ({cos(\x)/6+1/6},{sin(\x)/6});
\draw[->,color=green,line width=.5pt] plot[samples=20,domain=90:100] ({cos(\x)/6+1/6},{sin(\x)/6});
\draw[color=green,line width=1pt] plot[samples=360,domain=0:180] ({cos(\x)/6+1/6},{sin(\x)/6});
\draw[>-,color=brown,line width=1pt] (0,0.8)--(0,0);
\draw[>-,color=brown,line width=1pt] (0,0.75)--(0,0);
\draw[>-,color=brown,line width=1pt] (0,0.7)--(0,0);
\draw[>-,color=brown,line width=1pt] (0,0.2)--(0,0.5);
\draw[>-,color=brown,line width=1pt] (0,0.25)--(0,.5);
\draw[>-,color=brown,line width=1pt] (0,0.15)--(0,.5);

\draw[dotted,thick] plot[samples=180,domain=0:90] ({cos(\x)/sqrt(6)},{sin(\x)/sqrt(6)});

\draw({1/2},0)node[below]{$^{\frac12} $};
\draw({1/3},0)node[below]{$^{\frac13} $};
\draw[fill=black](0,{1/sqrt(6)})circle[radius=0.02]node at (-.1,{1/sqrt(6)-.02}) {$_{\frac{i}{\sqrt6}}$};
\draw[fill=black]({2/5},{1/5/sqrt(6)})circle[radius=0.02];

\node[color=black] at (0.375,1.375) {$^{ \mathfrak D_{6,6}}$};
\draw(0.25,-0.25) node[below]{\textbf{(d)}};

\end{tikzpicture}\begin{tikzpicture}[scale=2]

\foreach \y in {0}\draw[->]({\y},-.25)--({\y},1.625)node[above]{$\IM z$};
\foreach \y in {0} \draw[color=gray!20!white,fill,ultra thin] ({\y+1/2},1.5)--({\y+1/2},0)-- plot[samples=200,domain=0:101.5] ({5/12+cos(\x)/12},{sin(\x)/12})--plot[samples=360,domain=11.5:168.5] ({cos(\x)/sqrt(6)},{sin(\x)/sqrt(6)})-- plot[samples=200,domain=78.5:180] ({-5/12+cos(\x)/12},{sin(\x)/12})--({\y-1/2},0)--({\y-1/2},1.5);

\foreach \y in {0}\draw[->]({\y-0.6},0)--({\y+0.6},0)node[right]{$\RE z$};

\draw[->,color=green,line width=1pt] (.5,0)--(.5,.8);
\draw[->,color=green,line width=1pt] (-.5,0)--(-.5,.8);
\draw[->,color=green,line width=1pt] (.5,0)--(.5,.75);
\draw[->,color=green,line width=1pt] (-.5,0)--(-.5,.75);
\draw[->,color=green,line width=1pt] (.5,0)--(.5,.7);
\draw[->,color=green,line width=1pt] (-.5,0)--(-.5,.7);
\draw[color=green,line width=1pt] (.5,0)--(.5,1.5);
\draw[color=green,line width=1pt] (-.5,0)--(-.5,1.5);

\draw[dotted,thick] plot[samples=180,domain=90:180] ({cos(\x)/sqrt(6)},{sin(\x)/sqrt(6)});
\draw[dotted,thick] plot[samples=180,domain=0:90] ({cos(\x)/sqrt(6)},{sin(\x)/sqrt(6)});
\draw[dotted,thick] plot[samples=360,domain=0:180] ({5/12+cos(\x)/12},{sin(\x)/12});
\draw[dotted,thick] plot[samples=360,domain=0:180] ({-5/12+cos(\x)/12},{sin(\x)/12});
\draw[->,color=blue,line width=.5pt] plot[samples=20,domain=130:140] ({-5/12+cos(\x)/12},{sin(\x)/12});
\draw[->,color=blue,line width=.5pt] plot[samples=20,domain=110:120] ({-5/12+cos(\x)/12},{sin(\x)/12});
\draw[->,color=blue,line width=.1pt] plot[samples=20,domain=70:60] ({5/12+cos(\x)/12},{sin(\x)/12});
\draw[->,color=blue,line width=.1pt] plot[samples=20,domain=50:40] ({5/12+cos(\x)/12},{sin(\x)/12});
\draw[color=blue,line width=1pt] plot[samples=200,domain=0:101.5] ({5/12+cos(\x)/12},{sin(\x)/12});
\draw[color=blue,line width=1pt] plot[samples=200,domain=78.5:180] ({-5/12+cos(\x)/12},{sin(\x)/12});
\draw[->,color=magenta,line width=.5pt] plot[samples=20,domain=130:120] ({cos(\x)/sqrt(6)},{sin(\x)/sqrt(6)});
\draw[->,color=magenta,line width=.5pt] plot[samples=20,domain=150:160] ({cos(\x)/sqrt(6)},{sin(\x)/sqrt(6)});
\draw[->,color=yellow,line width=.5pt] plot[samples=20,domain=30:20] ({cos(\x)/sqrt(6)},{sin(\x)/sqrt(6)});
\draw[->,color=yellow,line width=.5pt] plot[samples=20,domain=50:60] ({cos(\x)/sqrt(6)},{sin(\x)/sqrt(6)});
\draw[color=yellow,line width=1pt] plot[samples=360,domain=11.5:90] ({cos(\x)/sqrt(6)},{sin(\x)/sqrt(6)});
\draw[color=magenta,line width=1pt] plot[samples=360,domain=90:168.5] ({cos(\x)/sqrt(6)},{sin(\x)/sqrt(6)});

\draw[dotted,thick] ({1/3},0)--({1/3},{sqrt(2)/6});
\draw[dotted,thick] ({-1/3},0)--({-1/3},{sqrt(2)/6});
\draw[fill=yellow,draw=yellow]({1/3},{sqrt(2)/6})circle[radius=0.02];
\draw[fill=magenta,draw=magenta]({-1/3},{sqrt(2)/6})circle[radius=0.02];

\draw({1/2},0)node[below]{$^{\frac12} $};
\draw({1/3},0)node[below]{$^{\frac13} $};

\draw[fill=black](0,{1/sqrt(6)})circle[radius=0.02]node at (0,{1/sqrt(6)+.125}) {$_{\frac{i}{\sqrt6}}$};
\node[color=black] at (0.375,1.375) {$^{ \mathfrak D_{6,2}}$};
\draw(0,-0.25) node[below]{\textbf{(f)}};
\end{tikzpicture}\end{center}
\end{figure*}
\vspace{-2em}
\noindent with the accompanying figure caption intact. In association with the update in panel \textit{d}, the boundary of the plotted region should  be now referred to as $ \partial \mathfrak D_{6,6}$ in a sentence that immediately follows \cite[(83)]{Zhou2018ExpoDESY}.

Here, we point out that  panel  \textit{f}$ ^\circ$ serves as an alternative design for $ \mathfrak D_{6,6}$, the fundamental domain of the Chan--Zudilin group $ \Gamma_0(6)_{+6}$, but  panel \textit{d}$ ^\times$ does not qualify as a fundamental domain of $ \Gamma_0(6)_{+2}$ (which is to be visualized in the next paragraph).

Recall the modular invariants   $ X_{6,6}(z)=[\eta(z)\eta(6z)]^{12}/[\eta(2z)\eta(3z)]^{12}$, $ X_{6,3}(z)=[\eta(2z)\eta(6z)]^6/[\eta(z)\eta(3z)]^6$, and  $ X_{6,2}(z)=[\eta(3z)\eta(6z)]^4/[\eta(z)\eta(2z)]^4$ from \cite[(2.1)--(2.3)]{ChanZudilin2010a}, where   $ \eta(z)=e^{\pi iz/12}\prod_{n=1}^\infty(1-e^{2\pi inz})$ is the Dedekind eta function for $ \IM z>0$. We subdivide (any design of)  the fundamental domain $ \mathfrak D_{6,k}$ for the Chan--Zudilin group $\Gamma_0(6)_{+k}$ ($ k\in\{6,3,2\}$) into  $ \mathfrak D_{6,k}^+$ and $  \mathfrak D_{6,k}^- $, according to the sign of $ \IM X_{6,k}(z)$. Using modular transformations in $ \Gamma_0(6)_{+k}$, we can fit two copies of these sub-divided fundamental domains into $ \mathfrak D_6$, the fundamental domain of $ \Gamma_0(6)$ \cite[Fig.~2\textit{c}]{Zhou2018ExpoDESY}. For the current version of panels \textit{d}--\textit{f}, such  two-fold correspondences (for $ [\Gamma_{0}(6)_{+k}:\Gamma_{0}(6)]=2$) are illustrated below:

\vspace{-1.5em}
\begin{figure*}[h!]
\begin{center}

\begin{tikzpicture}[scale=2.65]

\foreach \y in {0}\draw[->]({\y},-.25)--({\y},.625)node[above]{$\IM z$};
\foreach \y in {0} \draw[color=gray!20!white,fill,ultra thin] ({\y+1/2},0.5)--({\y+1/2},0)-- plot[samples=200,domain=0:101.5] ({5/12+cos(\x)/12},{sin(\x)/12})--plot[samples=360,domain=11.5:90] ({cos(\x)/sqrt(6)},{sin(\x)/sqrt(6)})-- ({\y},0.5);
\foreach \y in {0} \draw[color=gray!20!white,fill,ultra thin] ({\y},{1/sqrt(6)})--({\y},0)-- plot[samples=200,domain=0:180] ({-1/6+cos(\x)/6},{sin(\x)/6})--plot[samples=360,domain=0:79.5] ({-5/12+cos(\x)/12},{sin(\x)/12})--plot[samples=360,domain={180-11.5}:90] ({cos(\x)/sqrt(6)},{sin(\x)/sqrt(6)})-- ({\y},{1/sqrt(6)});

\foreach \y in {0}\draw[->]({\y-0.6},0)--({\y+0.6},0)node[right]{$\RE z$};

\draw[->,color=green,line width=1pt] (.5,0)--(.5,.4);
\draw[->,color=green,line width=1pt] (-.5,0)--(-.5,.4);
\draw[color=brown,line width=1pt] (0,0)--(0,.5);

\draw[->,color=brown,line width=.5pt] (0,.45)--(0,.44);
\draw[->,color=brown,line width=.5pt] (0,.25)--(0,.3);
\draw[color=green,line width=1pt] (.5,0)--(.5,.5);
\draw[color=green,line width=1pt] (-.5,0)--(-.5,.5);

\draw[dotted,thick] plot[samples=180,domain=90:180] ({cos(\x)/sqrt(6)},{sin(\x)/sqrt(6)});
\draw[dotted,thick] plot[samples=180,domain=0:90] ({cos(\x)/sqrt(6)},{sin(\x)/sqrt(6)});
\draw[color=green,line width=1pt] plot[samples=180,domain=0:180] ({1/6+cos(\x)/6},{sin(\x)/6});

\draw[color=green,line width=1pt] plot[samples=180,domain=0:180] ({-1/6+cos(\x)/6},{sin(\x)/6});

\draw[->,color=green,line width=1pt] plot[samples=20,domain=100:110] ({1/6+cos(\x)/6},{sin(\x)/6});
\draw[->,color=green,line width=1pt] plot[samples=20,domain=80:70] ({-1/6+cos(\x)/6},{sin(\x)/6});

\draw[dotted,thick] plot[samples=360,domain=0:180] ({5/12+cos(\x)/12},{sin(\x)/12});
\draw[dotted,thick] plot[samples=360,domain=0:180] ({-5/12+cos(\x)/12},{sin(\x)/12});
\draw[->,color=blue,line width=.5pt] plot[samples=20,domain=40:30] ({-5/12+cos(\x)/12},{sin(\x)/12});
\draw[->,color=blue,line width=.5pt] plot[samples=20,domain=115:125] ({-5/12+cos(\x)/12},{sin(\x)/12});
\draw[->,color=blue,line width=.5pt] plot[samples=20,domain=70:60] ({5/12+cos(\x)/12},{sin(\x)/12});
\draw[->,color=blue,line width=.5pt] plot[samples=20,domain=140:150] ({5/12+cos(\x)/12},{sin(\x)/12});
\draw[color=blue,line width=1pt] plot[samples=200,domain=0:180] ({5/12+cos(\x)/12},{sin(\x)/12});
\draw[color=blue,line width=1pt] plot[samples=200,domain=0:180] ({-5/12+cos(\x)/12},{sin(\x)/12});
\draw[->,color=magenta,line width=.5pt] plot[samples=20,domain=120:130] ({cos(\x)/sqrt(6)},{sin(\x)/sqrt(6)});
\draw[->,color=magenta,line width=.5pt] plot[samples=20,domain=60:50] ({cos(\x)/sqrt(6)},{sin(\x)/sqrt(6)});
\draw[color=magenta,line width=1pt] plot[samples=360,domain=11.5:168.5] ({cos(\x)/sqrt(6)},{sin(\x)/sqrt(6)});

\draw({1/2},0)node[below]{$^{\frac12} $};
\draw({1/3},0)node[below]{$^{\frac13} $};

%\draw[fill=black](0,{1/sqrt(6)})circle[radius=0.015]node at (0,{1/sqrt(6)+.125}) %{$_{\frac{i}{\sqrt6}}$};
\node[color=black] at (-0.32,.42) {$^{\hat W_6 \mathfrak D_{6,6}^-}$};

\node[color=black] at (0.11,.21) {$^{ \mathfrak D_{6,6}^-}$};
\node[color=black] at (-0.15,.21) {$^{ \hat W_6\mathfrak D_{6,6}^+}$};
\node[color=black] at (0.375,.42) {$^{ \mathfrak D_{6,6}^+}$};
\draw(0,-0.25) node[below]{{\textbf{(d$'$)}}};

\end{tikzpicture}\begin{tikzpicture}[scale=2.65]

\foreach \y in {0}\draw[->]({\y},-.25)--({\y},.625)node[above]{$\IM z$};

%\draw[dotted,thick] (.5,1)--(.5,0);
\foreach \y in {0} \draw[color=gray!20!white,fill,ultra thin] ({\y+1/2},.5)--({\y+1/2},{.5/sqrt(3)})--plot[samples=360,domain=90:150] ({.5+.5/sqrt(3)*cos(\x)},{.5/sqrt(3)*sin(\x)})--plot[samples=360,domain=60:180] ({\y+1/6+cos(\x)/6},{sin(\x)/6})--({\y},{0})--({\y},.5);

\foreach \y in {0} \draw[color=gray!20!white,fill,ultra thin] (-.5,{.5/sqrt(3)})--(-.5,0)--plot[samples=360,domain=180:0] ({-5/12+cos(\x)/12},{sin(\x)/12})--plot[samples=360,domain=180:120] ({\y-1/6+cos(\x)/6},{sin(\x)/6})--plot[samples=360,domain=30:90] ({-1/2+cos(\x)/2/sqrt(3)},{sin(\x)/2/sqrt(3)});

\foreach \y in {0}\draw[->]({\y-0.6},0)--({\y+0.6},0)node[right]{$\RE z$};

\draw[dotted,thick] plot[samples=180,domain=90:180] ({.5+.5/sqrt(3)*cos(\x)},{.5/sqrt(3)*sin(\x)});
\draw[dotted,thick] plot[samples=180,domain=0:90] ({-.5+.5/sqrt(3)*cos(\x)},{.5/sqrt(3)*sin(\x)});
\draw[dotted,thick] plot[samples=360,domain=0:180] ({cos(\x)/6-1/6},{sin(\x)/6});
\draw[dotted,thick] plot[samples=360,domain=0:180] ({cos(\x)/6+1/6},{sin(\x)/6});

\draw[color=cyan,line width=1pt] plot[samples=120,domain=90:150] ({.5+.5/sqrt(3)*cos(\x)},{.5/sqrt(3)*sin(\x)});
\draw[color=cyan,line width=1pt] plot[samples=120,domain=30:90] ({-.5+.5/sqrt(3)*cos(\x)},{.5/sqrt(3)*sin(\x)});

\draw[->,color=green,line width=1pt] (.5,0)--(.5,.4);
\draw[->,color=green,line width=1pt] (-.5,0)--(-.5,.4);
\draw[->,color=green,line width=1pt] (.5,.2)--(.5,.15);
\draw[->,color=green,line width=1pt] (-.5,.2)--(-.5,.15);
\draw[color=green,line width=1pt] (.5,0)--(.5,.5);
\draw[color=green,line width=1pt] (-.5,0)--(-.5,.5);
\draw[color=blue,line width=1pt] (0,0)--(0,.5);
\draw[->,color=blue,line width=1pt] (0,0.3)--(0,.25);

\draw[->,color=blue,line width=.5pt] plot[samples=20,domain=90:100] ({5/12+cos(\x)/12},{sin(\x)/12});
\draw[->,color=blue,line width=.5pt] plot[samples=20,domain=100:90] ({-5/12+cos(\x)/12},{sin(\x)/12});
\draw[color=blue,line width=1pt] plot[samples=200,domain=0:180] ({5/12+cos(\x)/12},{sin(\x)/12});
\draw[color=blue,line width=1pt] plot[samples=200,domain=0:180] ({-5/12+cos(\x)/12},{sin(\x)/12});

\draw[->,color=cyan,line width=.5pt] plot[samples=20,domain=50:60] ({-.5+.5/sqrt(3)*cos(\x)},{.5/sqrt(3)*sin(\x)});
%\draw[->,color=cyan,line width=.5pt] plot[samples=20,domain=60:70] ({-.5+.5/sqrt(3)*cos(\x)},{.5/sqrt(3)*sin(\x)});
\draw[->,color=cyan,line width=.5pt] plot[samples=20,domain=130:120] ({.5+.5/sqrt(3)*cos(\x)},{.5/sqrt(3)*sin(\x)});
%\draw[->,color=cyan,line width=.5pt] plot[samples=20,domain=120:110] ({.5+.5/sqrt(3)*cos(\x)},{.5/sqrt(3)*sin(\x)});
\draw[->,color=red,line width=.5pt] plot[samples=20,domain=90:100] ({cos(\x)/6-1/6},{sin(\x)/6});
\draw[->,color=red,line width=.5pt] plot[samples=20,domain=100:90] ({cos(\x)/6+1/6},{sin(\x)/6});
\draw[->,color=red,line width=.5pt] plot[samples=20,domain=150:140] ({cos(\x)/6-1/6},{sin(\x)/6});
\draw[->,color=red,line width=.5pt] plot[samples=20,domain=30:40] ({cos(\x)/6+1/6},{sin(\x)/6});

\draw[color=red,line width=1pt] plot[samples=240,domain=0:180] ({cos(\x)/6-1/6},{sin(\x)/6});
\draw[color=red,line width=1pt] plot[samples=240,domain=0:180] ({cos(\x)/6+1/6},{sin(\x)/6});

\draw({1/2},0)node[below]{$^{\frac12} $};
\draw({1/3},0)node[below]{$^{\frac13} $};
\node[color=black] at (-0.375,.42) {$^{ \mathfrak D_{6,3}^-}$};

\node[color=black] at (0.375,.42) {$^{\mathfrak D_{6,3}^+}$};
\draw[<-,black,text=black,ultra thin] (-{5/12},.2)--(-{5/12},-.175) node at (-.4,-.23){$_{\hat \tau\hat W_3\mathfrak D_{6,3}^+}$};
\draw[<-,black,text=black,ultra thin] ({5/12},.2)--({5/12},-.175) node at (.4,-.23){$_{\hat W_3\hat T\mathfrak D_{6,3}^-}$};
\draw(0,-0.25) node[below]{{\textbf{(e$'$)}}};
\end{tikzpicture}\begin{tikzpicture}[scale=2.65]

\foreach \y in {0}\draw[->]({\y},-.25)--({\y},.625)node[above]{$\IM z$};
\foreach \y in {0} \draw[color=gray!20!white,fill,ultra thin] ({\y+1/2},0.5)--({\y+1/2},0)-- plot[samples=200,domain=0:101.5] ({5/12+cos(\x)/12},{sin(\x)/12})--plot[samples=360,domain=11.5:90] ({cos(\x)/sqrt(6)},{sin(\x)/sqrt(6)})-- ({\y},0.5);
\foreach \y in {0} \draw[color=gray!20!white,fill,ultra thin] ({\y},{1/sqrt(6)})--({\y},0)-- plot[samples=200,domain=180:0] ({1/6+cos(\x)/6},{sin(\x)/6})--plot[samples=360,domain=180:{180-79.5}] ({5/12+cos(\x)/12},{sin(\x)/12})--plot[samples=360,domain=11.5:90] ({cos(\x)/sqrt(6)},{sin(\x)/sqrt(6)})-- ({\y},{1/sqrt(6)});

\foreach \y in {0}\draw[->]({\y-0.6},0)--({\y+0.6},0)node[right]{$\RE z$};

\draw[->,color=green,line width=1pt] (.5,0)--(.5,.4);
\draw[->,color=green,line width=1pt] (-.5,0)--(-.5,.4);
\draw[color=blue,line width=1pt] (0,0)--(0,{1/sqrt(6)});
\draw[color=cyan,line width=1pt] (0,0.5)--(0,{1/sqrt(6)});

\draw[-<,color=cyan,line width=.5pt] (0,.45)--(0,.44);
\draw[->,color=blue,line width=.5pt] (0,.3)--(0,.25);
\draw[color=green,line width=1pt] (.5,0)--(.5,.5);
\draw[color=green,line width=1pt] (-.5,0)--(-.5,.5);

\draw[dotted,thick] plot[samples=180,domain=90:180] ({cos(\x)/sqrt(6)},{sin(\x)/sqrt(6)});
\draw[dotted,thick] plot[samples=180,domain=0:90] ({cos(\x)/sqrt(6)},{sin(\x)/sqrt(6)});
\draw[color=green,line width=1pt] plot[samples=180,domain=0:180] ({1/6+cos(\x)/6},{sin(\x)/6});

\draw[color=green,line width=1pt] plot[samples=180,domain=0:180] ({-1/6+cos(\x)/6},{sin(\x)/6});

\draw[->,color=green,line width=1pt] plot[samples=20,domain=110:100] ({1/6+cos(\x)/6},{sin(\x)/6});
\draw[->,color=green,line width=1pt] plot[samples=20,domain=70:80] ({-1/6+cos(\x)/6},{sin(\x)/6});

\draw[dotted,thick] plot[samples=360,domain=0:180] ({5/12+cos(\x)/12},{sin(\x)/12});
\draw[dotted,thick] plot[samples=360,domain=0:180] ({-5/12+cos(\x)/12},{sin(\x)/12});
\draw[->,color=cyan,line width=.5pt] plot[samples=20,domain=40:30] ({-5/12+cos(\x)/12},{sin(\x)/12});
\draw[->,color=blue,line width=.5pt] plot[samples=20,domain=115:125] ({-5/12+cos(\x)/12},{sin(\x)/12});
\draw[->,color=blue,line width=.5pt] plot[samples=20,domain=70:60] ({5/12+cos(\x)/12},{sin(\x)/12});
\draw[->,color=cyan,line width=.5pt] plot[samples=20,domain=140:150] ({5/12+cos(\x)/12},{sin(\x)/12});
\draw[color=blue,line width=1pt] plot[samples=200,domain=0:101.5] ({5/12+cos(\x)/12},{sin(\x)/12});
\draw[color=blue,line width=1pt] plot[samples=200,domain=78.5:180] ({-5/12+cos(\x)/12},{sin(\x)/12});

\draw[color=cyan,line width=1pt] plot[samples=200,domain=101.5:180] ({5/12+cos(\x)/12},{sin(\x)/12});
\draw[color=cyan,line width=1pt] plot[samples=200,domain=0:78.5] ({-5/12+cos(\x)/12},{sin(\x)/12});
\draw[->,color=magenta,line width=.5pt] plot[samples=20,domain=130:120] ({cos(\x)/sqrt(6)},{sin(\x)/sqrt(6)});
\draw[->,color=magenta,line width=.5pt] plot[samples=20,domain=150:160] ({cos(\x)/sqrt(6)},{sin(\x)/sqrt(6)});
\draw[->,color=yellow,line width=.5pt] plot[samples=20,domain=30:20] ({cos(\x)/sqrt(6)},{sin(\x)/sqrt(6)});
\draw[->,color=yellow,line width=.5pt] plot[samples=20,domain=50:60] ({cos(\x)/sqrt(6)},{sin(\x)/sqrt(6)});
\draw[color=yellow,line width=1pt] plot[samples=360,domain=11.5:90] ({cos(\x)/sqrt(6)},{sin(\x)/sqrt(6)});
\draw[color=magenta,line width=1pt] plot[samples=360,domain=90:168.5] ({cos(\x)/sqrt(6)},{sin(\x)/sqrt(6)});

\draw[dotted,thick] ({1/3},0)--({1/3},{sqrt(2)/6});
\draw[dotted,thick] ({-1/3},0)--({-1/3},{sqrt(2)/6});
\draw[fill=yellow,draw=yellow]({1/3},{sqrt(2)/6})circle[radius=0.02];
\draw[fill=magenta,draw=magenta]({-1/3},{sqrt(2)/6})circle[radius=0.02];
\draw({1/2},0)node[below]{$^{\frac12} $};
\draw({1/3},0)node[below]{$^{\frac13} $};

%\draw[fill=black](0,{1/sqrt(6)})circle[radius=0.015]node at (0,{1/sqrt(6)+.125}) %{$_{\frac{i}{\sqrt6}}$};
\node[color=black] at (-0.375,.42) {$^{\mathfrak D_{6,2}^-}$};

\node[color=black] at (0.15,.21) {$^{\hat W_2 \mathfrak D_{6,2}^+}$};
\node[color=black] at (-0.15,.21) {$^{ \hat W_2\hat\gamma\mathfrak D_{6,2}^-}$};
\node[color=black] at (0.375,.42) {$^{ \mathfrak D_{6,2}^+}$};
\draw(0,-0.25) node[below]{{\textbf{(f$'$)}}};

\end{tikzpicture}

\end{center}\end{figure*}

\vspace{-2em}
\noindent where $ \hat T z=z+1,\hat {\tau} z=z-1$ are horizontal translates, and  $ \hat\gamma z=(5z+2)/(12z+5)$ is a  $ \Gamma_0(6)$-transformation. In the graphical illustrations above, the subdivisions $ \mathfrak D_{6,k}^+$ and their $ \Gamma_0(6)_{+k}$-images are painted in gray. It is clear from panel \textit{f}$ '$ that the restriction of  $ X_{6,2}(z)$  to panel \textit{d}$^\times $ amounts to a two-fold cover of the upper half-plane, instead of the entire complex plane.

Originally, all the panels in \cite[Fig.~2]{Zhou2018ExpoDESY} were prepared by plotting the loci of real-valued modular invariants and identifying  equivalent sides on the loci. Such an approach became problematic for $ \Gamma_0(6)_{+2}$ (and hence panel \textit{d}$^\times$): it failed to detect the $ \Gamma_0(6)_{+2}$-equivalent sides that lie in the interior of $ \mathfrak D_{6,2}^+\cup\hat W_2\mathfrak D_{6,2}^+$ (see panel \textit{f}$ '$ above), on which $ \IM X_{6,2}(z)\neq0$.

After disqualifying panel \textit{d}$^\times $ from being a fundamental domain for $ \Gamma_0(6)_{+2}$, we continue with three more alternative formulations for the fundamental domains of the Chan--Zudilin groups, in the illustrations below.

\vspace{-1.5em}
\begin{figure*}[h!]
\begin{center}

\begin{tikzpicture}[scale=2.65]

\foreach \y in {0}\draw[->]({\y},-.25)--({\y},.625)node[above]{$\IM z$};
\foreach \y in {0} \draw[color=gray!20!white,fill,ultra thin] ({\y+1/2},0.5)--({\y+1/2},0)-- plot[samples=200,domain=0:101.5] ({5/12+cos(\x)/12},{sin(\x)/12})--plot[samples=360,domain=11.5:90] ({cos(\x)/sqrt(6)},{sin(\x)/sqrt(6)})-- ({\y},0.5);
\foreach \y in {0} \draw[color=gray!20!white,fill,ultra thin] ({\y},{1/sqrt(6)})--({\y},0)-- plot[samples=200,domain=0:180] ({-1/6+cos(\x)/6},{sin(\x)/6})--plot[samples=360,domain=0:79.5] ({-5/12+cos(\x)/12},{sin(\x)/12})--plot[samples=360,domain={180-11.5}:90] ({cos(\x)/sqrt(6)},{sin(\x)/sqrt(6)})-- ({\y},{1/sqrt(6)});

\foreach \y in {0}\draw[->]({\y-0.6},0)--({\y+0.6},0)node[right]{$\RE z$};

\draw[->,color=green,line width=1pt] (.5,0)--(.5,.4);
\draw[->,color=green,line width=1pt] (-.5,0)--(-.5,.4);
\draw[color=brown,line width=1pt] (0,0)--(0,.5);

\draw[->,color=brown,line width=.5pt] (0,.45)--(0,.44);
\draw[->,color=brown,line width=.5pt] (0,.25)--(0,.3);
\draw[color=green,line width=1pt] (.5,0)--(.5,.5);
\draw[color=green,line width=1pt] (-.5,0)--(-.5,.5);

\draw[dotted,thick] plot[samples=180,domain=90:180] ({cos(\x)/sqrt(6)},{sin(\x)/sqrt(6)});
\draw[dotted,thick] plot[samples=180,domain=0:90] ({cos(\x)/sqrt(6)},{sin(\x)/sqrt(6)});
\draw[color=green,line width=1pt] plot[samples=180,domain=0:180] ({1/6+cos(\x)/6},{sin(\x)/6});

\draw[color=green,line width=1pt] plot[samples=180,domain=0:180] ({-1/6+cos(\x)/6},{sin(\x)/6});

\draw[->,color=green,line width=1pt] plot[samples=20,domain=100:110] ({1/6+cos(\x)/6},{sin(\x)/6});
\draw[->,color=green,line width=1pt] plot[samples=20,domain=80:70] ({-1/6+cos(\x)/6},{sin(\x)/6});

\draw[dotted,thick] plot[samples=360,domain=0:180] ({5/12+cos(\x)/12},{sin(\x)/12});
\draw[dotted,thick] plot[samples=360,domain=0:180] ({-5/12+cos(\x)/12},{sin(\x)/12});
\draw[->,color=blue,line width=.5pt] plot[samples=20,domain=40:30] ({-5/12+cos(\x)/12},{sin(\x)/12});
\draw[->,color=blue,line width=.5pt] plot[samples=20,domain=115:125] ({-5/12+cos(\x)/12},{sin(\x)/12});
\draw[->,color=blue,line width=.5pt] plot[samples=20,domain=70:60] ({5/12+cos(\x)/12},{sin(\x)/12});
\draw[->,color=blue,line width=.5pt] plot[samples=20,domain=140:150] ({5/12+cos(\x)/12},{sin(\x)/12});
\draw[color=blue,line width=1pt] plot[samples=200,domain=0:180] ({5/12+cos(\x)/12},{sin(\x)/12});
\draw[color=blue,line width=1pt] plot[samples=200,domain=0:180] ({-5/12+cos(\x)/12},{sin(\x)/12});
\draw[->,color=magenta,line width=.5pt] plot[samples=20,domain=120:130] ({cos(\x)/sqrt(6)},{sin(\x)/sqrt(6)});
\draw[->,color=magenta,line width=.5pt] plot[samples=20,domain=60:50] ({cos(\x)/sqrt(6)},{sin(\x)/sqrt(6)});
\draw[color=magenta,line width=1pt] plot[samples=360,domain=11.5:168.5] ({cos(\x)/sqrt(6)},{sin(\x)/sqrt(6)});

\draw({1/2},0)node[below]{$^{\frac12} $};
\draw({1/3},0)node[below]{$^{\frac13} $};

%\draw[fill=black](0,{1/sqrt(6)})circle[radius=0.015]node at (0,{1/sqrt(6)+.125}) %{$_{\frac{i}{\sqrt6}}$};
\node[color=black] at (-0.375,.42) {$^{ \mathfrak D_{6,6}^-}$};

\node[color=black] at (0.15,.21) {$^{ \hat W_6\mathfrak D_{6,6}^-}$};
\node[color=black] at (-0.15,.21) {$^{ \hat W_6\mathfrak D_{6,6}^+}$};
\node[color=black] at (0.375,.42) {$^{ \mathfrak D_{6,6}^+}$};
\draw(0,-0.25) node[below]{\textbf{(f$^{\circ}{'}$)}};

\end{tikzpicture}\begin{tikzpicture}[scale=2.65]

\foreach \y in {0}\draw[->]({\y},-.25)--({\y},.625)node[above]{$\IM z$};

%\draw[dotted,thick] (.5,1)--(.5,0);
\foreach \y in {0} \draw[color=gray!20!white,fill,ultra thin] ({\y+1/2},.5)--({\y+1/2},0)--plot[samples=20,domain=0:180] ({5/12+cos(\x)/12},{sin(\x)/12})--plot[samples=360,domain=0:180] ({\y+1/6+cos(\x)/6},{sin(\x)/6})--({\y},{0})--({\y},.5);

\foreach \y in {0}\draw[->]({\y-0.6},0)--({\y+0.6},0)node[right]{$\RE z$};

\draw[dotted,thick] plot[samples=180,domain=90:180] ({.5+.5/sqrt(3)*cos(\x)},{.5/sqrt(3)*sin(\x)});
\draw[dotted,thick] plot[samples=180,domain=0:90] ({-.5+.5/sqrt(3)*cos(\x)},{.5/sqrt(3)*sin(\x)});
\draw[dotted,thick] plot[samples=360,domain=0:180] ({cos(\x)/6-1/6},{sin(\x)/6});
\draw[dotted,thick] plot[samples=360,domain=0:180] ({cos(\x)/6+1/6},{sin(\x)/6});
\draw[dotted,thick] ({1/3},0)--({1/3},{sqrt(2)/6});
\draw[dotted,thick] ({-1/3},0)--({-1/3},{sqrt(2)/6});
\draw[fill=yellow,draw=yellow]({1/3},{sqrt(2)/6})circle[radius=0.02];
\draw[fill=orange,draw=orange]({-1/3},{sqrt(2)/6})circle[radius=0.02];

\draw[color=yellow,line width=1pt] plot[samples=120,domain=90:150] ({.5+.5/sqrt(3)*cos(\x)},{.5/sqrt(3)*sin(\x)});
\draw[color=orange,line width=1pt] plot[samples=120,domain=30:90] ({-.5+.5/sqrt(3)*cos(\x)},{.5/sqrt(3)*sin(\x)});

\draw[color=blue,line width=1pt] (0,0)--(0,.5);
\draw[->,color=blue,line width=1pt] (0,0.3)--(0,.25);
\draw[->,color=blue,line width=.5pt] plot[samples=20,domain=70:60] ({5/12+cos(\x)/12},{sin(\x)/12});
\draw[->,color=blue,line width=.5pt] plot[samples=20,domain=130:140] ({-5/12+cos(\x)/12},{sin(\x)/12});
\draw[color=blue,line width=1pt] plot[samples=200,domain=0:180] ({5/12+cos(\x)/12},{sin(\x)/12});
\draw[color=blue,line width=1pt] plot[samples=200,domain=0:180] ({-5/12+cos(\x)/12},{sin(\x)/12});

\draw[->,color=green,line width=1pt] (.5,.5)--(.5,.35);
\draw[->,color=green,line width=1pt] (-.5,.5)--(-.5,.35);
\draw[color=green,line width=1pt] (.5,{.5/sqrt(3)})--(.5,.5);
\draw[color=green,line width=1pt] (-.5,{.5/sqrt(3)})--(-.5,.5);

\draw[color=red,line width=1pt] (.5,{.5/sqrt(3)})--(.5,0);
\draw[color=red,line width=1pt] (-.5,{.5/sqrt(3)})--(-.5,0);
\draw[->,color=red,line width=1pt] (.5,0)--(.5,.2);
\draw[->,color=red,line width=1pt] (-.5,0)--(-.5,.2);

\draw[->,color=orange,line width=.5pt] plot[samples=20,domain=35:45] ({-.5+.5/sqrt(3)*cos(\x)},{.5/sqrt(3)*sin(\x)});
\draw[->,color=orange,line width=.5pt] plot[samples=20,domain=80:70] ({-.5+.5/sqrt(3)*cos(\x)},{.5/sqrt(3)*sin(\x)});
\draw[->,color=yellow,line width=.5pt] plot[samples=20,domain=145:135] ({.5+.5/sqrt(3)*cos(\x)},{.5/sqrt(3)*sin(\x)});
\draw[->,color=yellow,line width=.5pt] plot[samples=20,domain=100:110] ({.5+.5/sqrt(3)*cos(\x)},{.5/sqrt(3)*sin(\x)});
\draw[->,color=red,line width=.5pt] plot[samples=20,domain=90:100] ({cos(\x)/6-1/6},{sin(\x)/6});
\draw[->,color=red,line width=.5pt] plot[samples=20,domain=100:90] ({cos(\x)/6+1/6},{sin(\x)/6});
\draw[color=red,line width=1pt] plot[samples=240,domain=0:120] ({cos(\x)/6-1/6},{sin(\x)/6});
\draw[color=red,line width=1pt] plot[samples=240,domain=60:180] ({cos(\x)/6+1/6},{sin(\x)/6});
\draw[->,color=green,line width=.5pt] plot[samples=20,domain=180:140] ({cos(\x)/6-1/6},{sin(\x)/6});
\draw[->,color=green,line width=.5pt] plot[samples=20,domain=0:40] ({cos(\x)/6+1/6},{sin(\x)/6});
\draw[color=green,line width=1pt] plot[samples=240,domain=180:120] ({cos(\x)/6-1/6},{sin(\x)/6});
\draw[color=green,line width=1pt] plot[samples=240,domain=0:60] ({cos(\x)/6+1/6},{sin(\x)/6});

\draw({1/2},0)node[below]{$^{\frac12} $};
\draw({1/3},0)node[below]{$^{\frac13} $};
\node[color=black] at (-0.375,.42) {$^{ \mathfrak D_{6,2}^-}$};

\node[color=black] at (0.375,.42) {$^{\mathfrak D_{6,2}^+}$};
\draw[<-,black,text=black,ultra thin] (-{5/12},.2)--(-{5/12},-.175) node at (-.4,-.23){$_{\hat W_2\hat\gamma\mathfrak D_{6,2}^-}$};
\draw[<-,black,text=black,ultra thin] ({5/12},.2)--({5/12},-.175) node at (.4,-.23){$_{\hat W_2\mathfrak D_{6,2}^+}$};
\draw(0,-0.25) node[below]{\textbf{(f$ ''$)}};\end{tikzpicture}\begin{tikzpicture}[scale=2.65]

\foreach \y in {0}\draw[->]({\y},-.25)--({\y},.625)node[above]{$\IM z$};
\foreach \y in {0} \draw[color=gray!20!white,fill,ultra thin] ({\y+1/2},0.5)--({\y+1/2},0)-- plot[samples=200,domain=0:101.5] ({5/12+cos(\x)/12},{sin(\x)/12})--plot[samples=360,domain=11.5:90] ({cos(\x)/sqrt(6)},{sin(\x)/sqrt(6)})-- ({\y},0.5);
\foreach \y in {0} \draw[color=gray!20!white,fill,ultra thin] ({\y},{1/sqrt(6)})--({\y},0)-- plot[samples=200,domain=180:0] ({1/6+cos(\x)/6},{sin(\x)/6})--plot[samples=360,domain=180:{180-79.5}] ({5/12+cos(\x)/12},{sin(\x)/12})--plot[samples=360,domain=11.5:90] ({cos(\x)/sqrt(6)},{sin(\x)/sqrt(6)})-- ({\y},{1/sqrt(6)});

\foreach \y in {0}\draw[->]({\y-0.6},0)--({\y+0.6},0)node[right]{$\RE z$};

\draw[->,color=green,line width=1pt] (.5,0)--(.5,.4);
\draw[->,color=green,line width=1pt] (-.5,0)--(-.5,.4);
\draw[color=blue,line width=1pt] (0,0)--(0,{1/sqrt(6)});
%\draw[color=cyan,line width=1pt] (0,0.5)--(0,{1/sqrt(6)});

\draw[->,color=blue,line width=.5pt] (0,.3)--(0,.25);
\draw[color=green,line width=1pt] (.5,0)--(.5,.5);
\draw[color=green,line width=1pt] (-.5,0)--(-.5,.5);

%\draw[dotted,thick] plot[samples=180,domain=90:180] ({cos(\x)/sqrt(6)},{sin(\x)/sqrt(6)});
%\draw[dotted,thick] plot[samples=180,domain=0:90] ({cos(\x)/sqrt(6)},{sin(\x)/sqrt(6)});
\draw[color=green,line width=1pt] plot[samples=180,domain=0:180] ({1/6+cos(\x)/6},{sin(\x)/6});

\draw[color=green,line width=1pt] plot[samples=180,domain=0:180] ({-1/6+cos(\x)/6},{sin(\x)/6});

\draw[->,color=green,line width=1pt] plot[samples=20,domain=110:100] ({1/6+cos(\x)/6},{sin(\x)/6});
\draw[->,color=green,line width=1pt] plot[samples=20,domain=70:80] ({-1/6+cos(\x)/6},{sin(\x)/6});

\draw[dotted,thick] plot[samples=360,domain=0:180] ({5/12+cos(\x)/12},{sin(\x)/12});
\draw[dotted,thick] plot[samples=360,domain=0:180] ({-5/12+cos(\x)/12},{sin(\x)/12});
\draw[color=blue,line width=1pt] (0,0)--(0,.5);
\draw[->,color=blue,line width=1pt] (0,0.3)--(0,.25);
\draw[->,color=blue,line width=.5pt] plot[samples=20,domain=70:60] ({5/12+cos(\x)/12},{sin(\x)/12});
\draw[->,color=blue,line width=.5pt] plot[samples=20,domain=130:140] ({-5/12+cos(\x)/12},{sin(\x)/12});
\draw[color=blue,line width=1pt] plot[samples=200,domain=0:180] ({5/12+cos(\x)/12},{sin(\x)/12});
\draw[color=blue,line width=1pt] plot[samples=200,domain=0:180] ({-5/12+cos(\x)/12},{sin(\x)/12});

\draw[color=green,line width=1pt] (.5,{.5/sqrt(3)})--(.5,.5);
\draw[color=green,line width=1pt] (-.5,{.5/sqrt(3)})--(-.5,.5);
\draw[->,color=magenta,line width=.5pt] plot[samples=20,domain=60:50] ({-1/4+cos(\x)/4},{sin(\x)/4});
\draw[->,color=magenta,line width=.5pt] plot[samples=20,domain=140:150] ({-1/4+cos(\x)/4},{sin(\x)/4});
\draw[->,color=yellow,line width=.5pt] plot[samples=20,domain=120:130] ({1/4+cos(\x)/4},{sin(\x)/4});
\draw[->,color=yellow,line width=.5pt] plot[samples=20,domain=40:30] ({1/4+cos(\x)/4},{sin(\x)/4});
\draw[color=yellow,line width=1pt] plot[samples=360,domain=0:180] ({1/4+cos(\x)/4},{sin(\x)/4});
\draw[color=magenta,line width=1pt] plot[samples=360,domain=0:180] ({-1/4+cos(\x)/4},{sin(\x)/4});

\draw[dotted,thick] ({1/3},0)--({1/3},{sqrt(2)/6});
\draw[dotted,thick] ({-1/3},0)--({-1/3},{sqrt(2)/6});
\draw[fill=yellow,draw=yellow]({1/3},{sqrt(2)/6})circle[radius=0.02];
\draw[fill=magenta,draw=magenta]({-1/3},{sqrt(2)/6})circle[radius=0.02];
\draw({1/2},0)node[below]{$^{\frac12} $};
\draw({1/3},0)node[below]{$^{\frac13} $};

%\draw[fill=black](0,{1/sqrt(6)})circle[radius=0.015]node at (0,{1/sqrt(6)+.125}) %{$_{\frac{i}{\sqrt6}}$};
\node[color=black] at (-0.375,.42) {$^{\mathfrak D_{6,2}^-}$};

\draw[<-,black,text=black,ultra thin] (.3,.2)--(.3,-.175) node at (0.25,-.23) {$^{\hat W_2 \mathfrak D_{6,2}^+}$};
\draw[<-,black,text=black,ultra thin] (-.3,.2)--(-.3,-.175) node at  (-0.25,-.23) {$^{ \hat W_2\hat\gamma\mathfrak D_{6,2}^-}$};
\node[color=black] at (0.375,.42) {$^{ \mathfrak D_{6,2}^+}$};
\draw(0,-0.25) node[below]{\textbf{(f$'''$)}};

\end{tikzpicture}
\end{center}\end{figure*}
\vspace{-2em}
\noindent Here, panel \textit{f}$ ^\circ{'}$ issues from panel \textit{f}$ ^\circ$, and may be compared to panel \textit{d}$'$ above. For the design of $ \mathfrak D_{6,2}$ in panel \textit{f}$'' $, its interior $ \mathring{ \mathfrak D}_{6,2}$ coincides with $\mathring{ \mathfrak D}_{6,3}  $ (see \cite[Fig.\ 2\textit{e}]{Zhou2018ExpoDESY} and panel \textit{e}$'$ above), but the boundary sides $\partial \mathfrak D_{6,2}$ and $\partial \mathfrak D_{6,3}$ are glued in different ways. The design of $ \mathfrak D_{6,2}$ in panel \textit{f}$''' $ is due to  Chan--Verrill \cite[Fig.\ 1]{ChanVerrill2009}.

In certain arithmetic applications (not quite related to the current survey article), it is desirable to align the boundary sides   $\partial \mathfrak D_{6,k}$ with special geodesics in some $ SL(2,\mathbb Z)$-images of $ \mathfrak D$ \cite[Fig.~2\textit{a}]{Zhou2018ExpoDESY}, the canonical choice of the fundamental domain for $ \Gamma_0(1)=SL(2,\mathbb Z)$. When such requirements become mandatory, we are left with fewer options for the design of the fundamental domains, such as $ \mathfrak D_{6,6}$ in panel \textit{d}, $\mathfrak D_{6,3}^+ \cup\hat W_3\hat T\mathfrak D_{6,3}^-$ in panel \textit{e}$'$, and $ \mathfrak D_{6,2}^+\cup\mathfrak D_{6,2}^-$ in panel \textit{f}$'''$.

If we restructure the fundamental domain $ \mathfrak D_{6,k}$ according to the last paragraph, then the Klein $j$-invariant $j(z)$ will become real-valued on $ \partial \mathfrak D_{6,k}$. In the following illustrations, we subdivide $ \mathfrak D_{6,k}$ by the sign of $ \IM j(z)$, assigning gray color to regions where $ \IM j(z)>0$:

\vspace{-1em}
\begin{figure*}[h!]
\begin{center}
\begin{tikzpicture}[scale=2]

\foreach \y in {0}\draw[->]({\y},-.25)--({\y},1.625)node[above]{$\IM z$};
\foreach \y in {0} \draw[color=gray!40!white,fill,ultra thin] plot[samples=60,domain=60:90] ({\y+cos(\x)},{sin(\x)})--({\y},0)--plot[samples=120,domain=180:120] ({\y+1+cos(\x)},{sin(\x)});

%\foreach \y in {-1}  \draw[color=gray!40!white,fill,ultra thin] plot[samples=120,domain=60:0] ({\y+cos(\x)},{sin(\x)})--plot[samples=180,domain=0:90] ({\y+1/2+cos(\x)/2},{sin(\x)/2});

\foreach \y in {0} \draw[color=gray!40!white,fill,ultra thin] plot[samples=180,domain=90:180] ({\y+1/2+cos(\x)/2},{sin(\x)/2})--plot[samples=240,domain=180:60] ({\y+1/3+cos(\x)/3},{sin(\x)/3});

%\foreach \y in {-1} \draw[color=gray!40!white,fill,ultra thin] plot[samples=120,domain=60:36.9] ({\y+1/3+cos(\x)/3},{sin(\x)/3})--plot[samples=360,domain=127:0] ({\y+3/4+cos(\x)/4},{sin(\x)/4})--plot[samples=240,domain=0:120] ({\y+2/3+cos(\x)/3},{sin(\x)/3});

\foreach \y in {0} \draw[color=gray!40!white,fill,ultra thin] plot[samples=120,domain=120:143.1] ({\y+2/3+cos(\x)/3},{sin(\x)/3})--plot[samples=360,domain=53:0] ({\y+1/4+cos(\x)/4},{sin(\x)/4})--({\y+1/2},{1/2/sqrt(3)});

\foreach \y in {0} \draw[color=gray!40!white,fill,ultra thin] plot[samples=360,domain=180:53] ({\y+1/4+cos(\x)/4},{sin(\x)/4})--plot[samples=120,domain=143.1:158.2] ({\y+2/3+cos(\x)/3},{sin(\x)/3})--plot[samples=360,domain=38.2:180] ({\y+1/5+cos(\x)/5},{sin(\x)/5});

%\foreach \y in {-1}  \draw[color=gray!40!white,fill,ultra thin] plot[samples=360,domain=0:141.8] ({\y+4/5+cos(\x)/5},{sin(\x)/5})--plot[samples=360,domain=81.8:53.1] ({\y+5/8+cos(\x)/8},{sin(\x)/8})--plot[samples=360,domain=143.1:0] ({\y+5/6+cos(\x)/6},{sin(\x)/6});

%\foreach \y in {-1}  \draw[color=gray!40!white,fill,ultra thin] plot[samples=360,domain=81.8:180] ({\y+5/8+cos(\x)/8},{sin(\x)/8})--plot[samples=360,domain=180:127] ({\y+3/4+cos(\x)/4},{sin(\x)/4})--plot[samples=120,domain=36.9:21.8] ({\y+1/3+cos(\x)/3},{sin(\x)/3});

%\foreach \y in {-1}  \draw[color=gray!40!white,fill,ultra thin] plot[samples=120,domain=21.8:0] ({\y+1/3+cos(\x)/3},{sin(\x)/3})--plot[samples=120,domain=0:67.4] ({\y+7/12+cos(\x)/12},{sin(\x)/12})--plot[samples=360,domain=157.4:141.8] ({\y+4/5+cos(\x)/5},{sin(\x)/5});

\foreach \y in {0} \draw[color=gray!40!white,fill,ultra thin] plot[samples=120,domain=0:98.2] ({\y+3/8+cos(\x)/8},{sin(\x)/8})--plot[samples=360,domain=38.2:22.6] ({\y+1/5+cos(\x)/5},{sin(\x)/5})--plot[samples=120,domain=112.6:0] ({\y+5/12+cos(\x)/12},{sin(\x)/12});

\foreach \y in {0} \draw[color=gray!40!white,fill,ultra thin] plot[samples=120,domain=98.2:126.9] ({\y+3/8+cos(\x)/8},{sin(\x)/8})--plot[samples=360,domain=36.9:0] ({\y+1/6+cos(\x)/6},{sin(\x)/6})--plot[samples=120,domain=180:158.2] ({\y+2/3+cos(\x)/3},{sin(\x)/3});

%\foreach \y in {0} \draw[color=gray!40!white,fill,ultra thin] ({\y},1.5)--({\y},1)--plot[samples=60,domain=90:120] ({\y+cos(\x)},{sin(\x)})--({\y-.5},{sqrt(3)/2})--({\y-.5},1.5);

\foreach \y in {0}
\draw[->]({\y-0.15},0)--({\y+0.6},0)node[right]{$\RE z$};

\draw[->,color=green,line width=1pt] (.5,0)--(.5,.8);
\draw[->,color=green,line width=1pt] (.5,0)--(.5,.75);
%\draw[->,color=green,line width=1pt] (.5,0)--(.5,.7);
\draw[color=green,line width=1pt] (.5,0)--(.5,1.5);
\draw[->,color=blue,line width=.1pt] plot[samples=20,domain=60:50] ({5/12+cos(\x)/12},{sin(\x)/12});
\draw[->,color=blue,line width=.1pt] plot[samples=20,domain=140:150] ({5/12+cos(\x)/12},{sin(\x)/12});
\draw[color=blue,line width=1pt] plot[samples=360,domain=0:180] ({cos(\x)/12+5/12},{sin(\x)/12});
\draw[color=brown,line width=1pt] (0,0)--(0,1.5);
%\draw[->,color=green,line width=.5pt] plot[samples=20,domain=100:110] ({cos(\x)/6+1/6},{sin(\x)/6});
\draw[->,color=green,line width=.5pt] plot[samples=20,domain=80:90] ({cos(\x)/6+1/6},{sin(\x)/6});
\draw[->,color=green,line width=.5pt] plot[samples=20,domain=90:100] ({cos(\x)/6+1/6},{sin(\x)/6});
\draw[color=green,line width=1pt] plot[samples=360,domain=0:180] ({cos(\x)/6+1/6},{sin(\x)/6});
\draw[>-,color=brown,line width=1pt] (0,0.8)--(0,0);
\draw[>-,color=brown,line width=1pt] (0,0.75)--(0,0);
\draw[>-,color=brown,line width=1pt] (0,0.7)--(0,0);
\draw[>-,color=brown,line width=1pt] (0,0.2)--(0,0.5);
\draw[>-,color=brown,line width=1pt] (0,0.25)--(0,.5);
\draw[>-,color=brown,line width=1pt] (0,0.15)--(0,.5);

\draw[dotted,thick] plot[samples=180,domain=0:90] ({cos(\x)/sqrt(6)},{sin(\x)/sqrt(6)});

\draw({1/2},0)node[below]{$^{\frac12} $};
\draw({1/3},0)node[below]{$^{\frac13} $};
\draw[fill=black](0,{1/sqrt(6)})circle[radius=0.02]node at (-.1,{1/sqrt(6)-.02}) {$_{\frac{i}{\sqrt6}}$};
\draw[fill=black]({2/5},{1/5/sqrt(6)})circle[radius=0.02];

\node[color=black] at (0.375,1.375) {$^{ \mathfrak D_{6,6}}$};
\draw(0.25,-0.25) node[below]{\textbf{(d$^*$)}};

\end{tikzpicture}\begin{tikzpicture}[scale=2]

\foreach \y in {0}\draw[->]({\y},-.25)--({\y},1.625)node[above]{$\IM z$};
\foreach \y in {0} \draw[color=gray!40!white,fill,ultra thin] plot[samples=60,domain=60:90] ({\y+cos(\x)},{sin(\x)})--({\y},0)--plot[samples=120,domain=180:120] ({\y+1+cos(\x)},{sin(\x)});

%\foreach \y in {-1}  \draw[color=gray!40!white,fill,ultra thin] plot[samples=120,domain=60:0] ({\y+cos(\x)},{sin(\x)})--plot[samples=180,domain=0:90] ({\y+1/2+cos(\x)/2},{sin(\x)/2});

\foreach \y in {0} \draw[color=gray!40!white,fill,ultra thin] plot[samples=180,domain=90:180] ({\y+1/2+cos(\x)/2},{sin(\x)/2})--plot[samples=240,domain=180:60] ({\y+1/3+cos(\x)/3},{sin(\x)/3});

%\foreach \y in {-1} \draw[color=gray!40!white,fill,ultra thin] plot[samples=120,domain=60:36.9] ({\y+1/3+cos(\x)/3},{sin(\x)/3})--plot[samples=360,domain=127:0] ({\y+3/4+cos(\x)/4},{sin(\x)/4})--plot[samples=240,domain=0:120] ({\y+2/3+cos(\x)/3},{sin(\x)/3});

\foreach \y in {0} \draw[color=gray!40!white,fill,ultra thin] plot[samples=120,domain=120:143.1] ({\y+2/3+cos(\x)/3},{sin(\x)/3})--plot[samples=360,domain=53:0] ({\y+1/4+cos(\x)/4},{sin(\x)/4})--({\y+1/2},{1/2/sqrt(3)});

\foreach \y in {0} \draw[color=gray!40!white,fill,ultra thin] plot[samples=360,domain=180:53] ({\y+1/4+cos(\x)/4},{sin(\x)/4})--plot[samples=120,domain=143.1:158.2] ({\y+2/3+cos(\x)/3},{sin(\x)/3})--plot[samples=360,domain=38.2:180] ({\y+1/5+cos(\x)/5},{sin(\x)/5});

%\foreach \y in {-1}  \draw[color=gray!40!white,fill,ultra thin] plot[samples=360,domain=0:141.8] ({\y+4/5+cos(\x)/5},{sin(\x)/5})--plot[samples=360,domain=81.8:53.1] ({\y+5/8+cos(\x)/8},{sin(\x)/8})--plot[samples=360,domain=143.1:0] ({\y+5/6+cos(\x)/6},{sin(\x)/6});

%\foreach \y in {-1}  \draw[color=gray!40!white,fill,ultra thin] plot[samples=360,domain=81.8:180] ({\y+5/8+cos(\x)/8},{sin(\x)/8})--plot[samples=360,domain=180:127] ({\y+3/4+cos(\x)/4},{sin(\x)/4})--plot[samples=120,domain=36.9:21.8] ({\y+1/3+cos(\x)/3},{sin(\x)/3});

%\foreach \y in {-1}  \draw[color=gray!40!white,fill,ultra thin] plot[samples=120,domain=21.8:0] ({\y+1/3+cos(\x)/3},{sin(\x)/3})--plot[samples=120,domain=0:67.4] ({\y+7/12+cos(\x)/12},{sin(\x)/12})--plot[samples=360,domain=157.4:141.8] ({\y+4/5+cos(\x)/5},{sin(\x)/5});

\foreach \y in {0} \draw[color=gray!40!white,fill,ultra thin] plot[samples=120,domain=0:98.2] ({\y+3/8+cos(\x)/8},{sin(\x)/8})--plot[samples=360,domain=38.2:22.6] ({\y+1/5+cos(\x)/5},{sin(\x)/5})--plot[samples=120,domain=112.6:0] ({\y+5/12+cos(\x)/12},{sin(\x)/12});

\foreach \y in {0} \draw[color=gray!40!white,fill,ultra thin] plot[samples=120,domain=98.2:126.9] ({\y+3/8+cos(\x)/8},{sin(\x)/8})--plot[samples=360,domain=36.9:0] ({\y+1/6+cos(\x)/6},{sin(\x)/6})--plot[samples=120,domain=180:158.2] ({\y+2/3+cos(\x)/3},{sin(\x)/3});

%\foreach \y in {0} \draw[color=gray!40!white,fill,ultra thin] ({\y},1.5)--({\y},1)--plot[samples=60,domain=90:120] ({\y+cos(\x)},{sin(\x)})--({\y-.5},{sqrt(3)/2})--({\y-.5},1.5);

\foreach \y in {0}
\draw[->]({\y-0.15},0)--({\y+0.6},0)node[right]{$\RE z$};

\draw[->,color=green,line width=1pt] (.5,0)--(.5,.6);
%\draw[->,color=green,line width=1pt] (-.5,0)--(-.5,.4);
\draw[->,color=green,line width=1pt] (.5,.2)--(.5,.1);
%\draw[->,color=green,line width=1pt] (-.5,.2)--(-.5,.15);
\draw[color=green,line width=1pt] (.5,0)--(.5,1.5);
%\draw[color=green,line width=1pt] (-.5,0)--(-.5,.5);
\draw[color=blue,line width=1pt] (0,0)--(0,1.5);
\draw[->,color=blue,line width=1pt] (0,0.6)--(0,.5);

\draw[->,color=blue,line width=.5pt] plot[samples=20,domain=90:100] ({5/12+cos(\x)/12},{sin(\x)/12});
%\draw[->,color=blue,line width=.5pt] plot[samples=20,domain=100:90] ({-5/12+cos(\x)/12},{sin(\x)/12});
\draw[color=blue,line width=1pt] plot[samples=200,domain=0:180] ({5/12+cos(\x)/12},{sin(\x)/12});
%\draw[color=blue,line width=1pt] plot[samples=200,domain=0:180] ({-5/12+cos(\x)/12},{sin(\x)/12});

%\draw[->,color=red,line width=.5pt] plot[samples=20,domain=90:100] ({cos(\x)/6-1/6},{sin(\x)/6});
\draw[->,color=red,line width=.5pt] plot[samples=20,domain=100:90] ({cos(\x)/6+1/6},{sin(\x)/6});
%\draw[->,color=red,line width=.5pt] plot[samples=20,domain=150:140] ({cos(\x)/6-1/6},{sin(\x)/6});
\draw[->,color=red,line width=.5pt] plot[samples=20,domain=30:40] ({cos(\x)/6+1/6},{sin(\x)/6});

%\draw[color=red,line width=1pt] plot[samples=240,domain=0:180] ({cos(\x)/6-1/6},{sin(\x)/6});
\draw[color=red,line width=1pt] plot[samples=240,domain=0:180] ({cos(\x)/6+1/6},{sin(\x)/6});

\draw[dotted,thick] plot[samples=180,domain=90:180] ({.5+cos(\x)/2/sqrt(3)},{sin(\x)/2/sqrt(3)});

\draw({1/2},0)node[below]{$^{\frac12} $};
\draw({1/3},0)node[below]{$^{\frac13} $};
\draw[fill=black](0.5,{1/2/sqrt(3)})circle[radius=0.02]node at (.7,{1/2/sqrt(3)-.02}) {$_{\frac12+\frac{i}{2\sqrt3}}$};
\draw[fill=black](0.25,{1/4/sqrt(3)})circle[radius=0.02];

\node[color=black] at (0.375,1.375) {$^{ \mathfrak D_{6,3}}$};
\draw(0.25,-0.25) node[below]{\textbf{(e$^*$)}};

\end{tikzpicture}\begin{tikzpicture}[scale=2]

\foreach \y in {0}\draw[->]({\y},-.25)--(0,0);
\foreach \y in {0}\draw[->]({\y},1.5)--({\y},1.625)node[above]{$\IM z$};
\foreach \y in {0} \draw[color=gray!40!white,fill,ultra thin] plot[samples=60,domain=60:90] ({\y+cos(\x)},{sin(\x)})--({\y},0)--plot[samples=120,domain=180:120] ({\y+1+cos(\x)},{sin(\x)});

\foreach \y in {-1}  \draw[color=gray!40!white,fill,ultra thin] plot[samples=120,domain=60:0] ({\y+cos(\x)},{sin(\x)})--plot[samples=180,domain=0:90] ({\y+1/2+cos(\x)/2},{sin(\x)/2});

\foreach \y in {0} \draw[color=gray!40!white,fill,ultra thin] plot[samples=180,domain=90:180] ({\y+1/2+cos(\x)/2},{sin(\x)/2})--plot[samples=240,domain=180:60] ({\y+1/3+cos(\x)/3},{sin(\x)/3});

\foreach \y in {-1} \draw[color=gray!40!white,fill,ultra thin] plot[samples=120,domain=60:36.9] ({\y+1/3+cos(\x)/3},{sin(\x)/3})--plot[samples=360,domain=127:0] ({\y+3/4+cos(\x)/4},{sin(\x)/4})--plot[samples=240,domain=0:120] ({\y+2/3+cos(\x)/3},{sin(\x)/3});

\foreach \y in {0} \draw[color=gray!40!white,fill,ultra thin] plot[samples=120,domain=120:143.1] ({\y+2/3+cos(\x)/3},{sin(\x)/3})--plot[samples=360,domain=53:0] ({\y+1/4+cos(\x)/4},{sin(\x)/4})--({\y+1/2},{1/2/sqrt(3)});

\foreach \y in {0} \draw[color=gray!40!white,fill,ultra thin] ({\y},1.5)--({\y},1)--plot[samples=60,domain=90:120] ({\y+cos(\x)},{sin(\x)})--({\y-.5},{sqrt(3)/2})--({\y-.5},1.5);

\foreach \y in {0}
\draw[->]({\y-0.6},0)--({\y+0.6},0)node[right]{$\RE z$};

\draw[->,color=green,line width=1pt] (.5,0)--(.5,.6);
\draw[->,color=green,line width=1pt] (-.5,0)--(-.5,.6);

\draw[color=green,line width=1pt] (.5,{.5/sqrt(3)})--(.5,1.5);
\draw[color=green,line width=1pt] (-.5,{.5/sqrt(3)})--(-.5,1.5);
\draw[->,color=magenta,line width=.5pt] plot[samples=20,domain=60:50] ({-1/4+cos(\x)/4},{sin(\x)/4});
\draw[->,color=magenta,line width=.5pt] plot[samples=20,domain=140:150] ({-1/4+cos(\x)/4},{sin(\x)/4});
\draw[->,color=yellow,line width=.5pt] plot[samples=20,domain=120:130] ({1/4+cos(\x)/4},{sin(\x)/4});
\draw[->,color=yellow,line width=.5pt] plot[samples=20,domain=40:30] ({1/4+cos(\x)/4},{sin(\x)/4});
\draw[color=yellow,line width=1pt] plot[samples=360,domain=0:180] ({1/4+cos(\x)/4},{sin(\x)/4});
\draw[color=magenta,line width=1pt] plot[samples=360,domain=0:180] ({-1/4+cos(\x)/4},{sin(\x)/4});

\draw[dotted,thick] ({1/3},0)--({1/3},{sqrt(2)/6});
\draw[dotted,thick] ({-1/3},0)--({-1/3},{sqrt(2)/6});
\draw[fill=yellow,draw=yellow]({1/3},{sqrt(2)/6})circle[radius=0.02];
\draw[fill=magenta,draw=magenta]({-1/3},{sqrt(2)/6})circle[radius=0.02];

\draw({1/2},0)node[below]{$^{\frac12} $};
\draw({1/3},0)node[below]{$^{\frac13} $};

\node[color=black] at (0.375,1.375) {$^{ \mathfrak D_{6,2}}$};
\draw(0,-0.25) node[below]{\textbf{(f$^*$)}};

\end{tikzpicture}
\end{center}
\end{figure*}
\vspace{-2em}
\noindent so that the relation  $ [\Gamma_{0}(6)_{+k}:\Gamma_{0}(6)]=2$ is graphically obvious, upon comparison to \cite[Fig.~2\textit{c}]{Zhou2018ExpoDESY}.

In practice, we chose panel \textit{e} over panel \textit{e}$^*$ for the design of $ \mathfrak D_{6,3}$, because the geodesic joining $ \frac12+\frac{i}{2\sqrt3}$ to $ \frac14+\frac{i}{4\sqrt3}$ was essential to some computations in \cite[\S5]{Zhou2017WEFa}, thus deserving a place on $ \partial \mathfrak D_{6,3}$.

The current arXiv version of \cite{Zhou2018ExpoDESY} (\url{arXiv:1801.05555v3}) contains updated panels \textit{d} and \textit{f} in the main text, along with an erratum section at its end.

\end{document}